\newcommand\ZZ{\mathbb{Z}} 
\newcommand\QQ{\mathbb{Q}} 
\newcommand\fleche{\longrightarrow}
\DeclareMathOperator{\Isom}{Isom}
\DeclareMathOperator{\Spec}{Spec}
\DeclareMathOperator{\Vect}{Vect}
\DeclareMathOperator{\id}{id}
\DeclareMathOperator{\Gr}{Gr}
\DeclareMathOperator{\GL}{GL}
\DeclareMathOperator{\Char}{Char}
\DeclareMathOperator{\hasse}{hasse}
\DeclareMathOperator{\rk}{rk}
\theoremstyle{definition} 
\newtheorem{defin}{Definition}[section]
\newtheorem{hypothese}[defin]{Hypothesis}
\theoremstyle{plain} 
\newtheorem{theor}[defin]{Theorem}
\newtheorem{lemm}[defin]{Lemma}  
\newtheorem{prop}[defin]{Proposition}
\newtheorem{cor}[defin]{Corollary}
\theoremstyle{remark} 
\newtheorem{rema}[defin]{Remark}
\newtheorem{example}[defin]{Example}
\newtheorem{claim}[defin]{Claim}
\definecolor{vert}{rgb}{0.09,0.62,0.40}
\begin{document}

\title{On the geometry of the Pappas-Rapoport models in the (AR) case}
\author{Stéphane Bijakowski and Valentin Hernandez}
\date{}
\maketitle

\section{Introduction}

In the last 50 years at least, Shimura varieties have played a central role in the Langlands program. Most of the time, these varieties can be thought of as moduli spaces of abelian varieties over $\Spec(\QQ)$. It turns out that for arithmetic applications it is sometimes desirable to have an integral structure on these spaces to be able to use their reduction modulo primes $p$. Such integral structures have been studied first by Deligne-Rapoport (\cite{DR}) and Katz-Mazur (\cite{KM}) for the modular curve and have been largely studied since then. 

Obviously, there are a priori a lot of possible choices for these integral models, but there is a natural way to choose 
one by extending the moduli problem over $\Spec(\ZZ)$ (or some localisation of it). This \textit{natural} strategy has 
been extensively studied and most of the results in the P.E.L. case can be found in works of Lan (\cite{Lan}) as long as the moduli problem is \textit{unramified}. An extra difficulty appears when we allow 
ramification in the moduli problem. In this case it has been realized a long time ago in works of Pappas and Rapoport 
(see e.g. \cite{PRLMI}) that the natural moduli problem has bad geometric properties. In their work, Pappas and 
Rapoport suggested to study a slightly different integral model than the \textit{natural} one, by adding to the moduli 
problem (parametrizing abelian schemes) an extra linear data of a flag of the Hodge filtration (with some restricting 
properties). This model is refered to as the \textit{splitting model}, or (as we call it) Pappas-Rapoport model. In some 
sense, this model should be thought of as a blow-up of the natural model along some of its singularities. The goal of this article is to study the geometry of this model, and in particular of its special fiber.

Let us be more precise. As in \cite{BH2}, we focus on the P.E.L cases of type A and C, allowing some ramification.
More precisely, we consider quasi-split unitary or symplectic groups over a ramified number field. In case C, we studied all 
cases in \cite{BH2}, proving that the model is smooth, and the ordinary locus is dense in the special fiber. In case 
A, we have a CM field $F$ over a totally real field $F_0$, and we studied in \cite{BH2} the geometry of the model and 
its special fiber at $p$ under the assumption that $F/F_0$ was unramified at $p$. In this case we showed also that 
the model is smooth and the $\mu$-ordinary locus is dense in the special fiber. Both of this results were expected 
since \cite{P-R}, and proved in some cases. It was clear, since the PhD thesis of Kramer (\cite{Kramer}) that the model 
could not be smooth if we allow $F/F_0$ to be ramified. In this article, we study the geometry of the special fiber of the 
Pappas-Rapoport model under this assumption (which is referred to as the (AR) case). We prove that the special fiber is stratified by an explicit poset with a combinatorial description, and in particular we have a description of the irreducible components of the special fiber. Moreover we prove the closure relations for this stratification. Note that the Rapoport locus coincides with one (maximal) stratum of the special fiber. Let us give a precise formulation when $F_0 = \QQ$, i.e. when $F$ is quadratic imaginary, and $p$ is a ramified prime, $\pi$ a uniformizer of $F_p := F \otimes_\QQ \QQ_p$. Let $a,b$ be integers with $a \leq b$, and let $Y$ be the Pappas-Rapoport model over $O_{F_p}$ (see Definition \ref{defin:PRmodel}) for the unitary group $GU(a,b)_{F/\QQ}$, and let $X$ be its special fiber. The entire point of the Pappas-Rapoport model in this case is that over $Y$, and thus over $X$, there is a locally direct factor $\omega_1 \subseteq \omega$ of rank $a$, where $\omega$ is the conormal sheaf of the universal abelian scheme. There is also a second locally direct factor $\omega_2 \subseteq \omega$ of rank $b$, which is obtained from $\omega_1$ and the polarization (see Definition \ref{defin:omega2}). Moreover the universal abelian scheme has an action of $O_F$, thus so does $\omega$.

For every $0 \leq h \leq \ell \leq a$, set
\[ X_{h,\ell} := \{ x \in X | \dim \pi \omega = h, \dim \omega_1 \cap \omega_2 = \ell\}.\]
This is a locally closed subscheme of $X$. For example $X_{a,a}$ is the (generalised) Rapoport locus. Our first result is the following,

\begin{theor}
Assume $p \neq 2$. For all $h \leq \ell$, the stratum $X_{h,\ell}$ is non empty, smooth, and equidimensional of dimension $ab - \frac{(\ell - h)(\ell - h + 1)}{2}$. Moreover we have the closure relations,
\[ \overline{X_{h,\ell}} = \coprod_{0 \leq h' \leq h \leq \ell \leq \ell' \leq a} X_{h',\ell'}.\]
In particular $X$ is not smooth, and the smooth locus is the union of the $X_{h,h}$ for $0 \leq h \leq a$. Moreover $Y$ is flat over $O_F$, normal, and $X$ is Cohen-Macaulay.
\end{theor}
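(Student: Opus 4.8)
The plan is to reduce everything to a single explicit local computation by means of a local model diagram, and then to recognise the local model, Zariski-locally, as a (twisted, symmetric) variety of complexes, whose geometry is classical.

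\emph{Local model diagram.} As in \cite{P-R} and \cite{BH2}, the model $Y$ carries a local model diagram: there is a scheme $\widetilde Y$ together with two smooth surjective morphisms $\widetilde Y \to Y$ and $\widetilde Y \to M$, where $M$ is the Pappas--Rapoport local model, an explicit projective $O_{F_p}$-scheme realised inside a product of Grassmannians parametrising, over a fixed lattice $\Lambda$ with its $\pi$-action and its pairing, the pairs $(\omega_1 \subseteq \omega)$ subject to the defining incidence and eigenvalue conditions. The functions $\dim\pi\omega$ and $\dim(\omega_1\cap\omega_2)$ on $\widetilde Y$ are pulled back along $\widetilde Y \to M$ from the corresponding functions on $M$. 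Since $\widetilde Y\to Y$ and $\widetilde Y\to M$ are smooth of the same relative dimension, $Y$ and $M$ are étale-locally isomorphic up to a smooth factor, so every assertion of the theorem — smoothness, equidimensionality and dimension of the strata, the closure relations, flatness over $O_{F_p}$ (hence over $O_F$), normality of $Y$, Cohen--Macaulayness of $X$ — is equivalent to the same assertion for $M$ and its special fibre $\overline M = M\otimes_{O_{F_p}}\FF$.

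\emph{Explicit charts.} Cover $M$ by the standard affine charts obtained by fixing a basis adapted to $\Lambda$ and to the splitting of the $\pi$-action modulo $p$. In such a chart, $\omega$ and $\omega_1\subseteq\omega$ are given by matrices of coordinates and the defining conditions become the matrix equations $\pi\omega\subseteq\omega_1$, $\pi\omega_1 = 0$, together with the isotropy of $\omega$, which in the adapted basis expresses $\omega_2$ as an explicit function of $(\omega,\omega_1)$. Up to the smooth ambient factor, the resulting affine $O_{F_p}$-scheme is a (symmetric, resp. alternating, according to the parity controlled by the polarization) analogue of a variety of complexes: it is cut out over $O_{F_p}$ by the entries of a product of two matrices subject to one transpose-symmetry relation. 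For such schemes the following is classical (De~Concini--Strickland, Musili--Seshadri, and the symmetric/antisymmetric variants): they are reduced, Cohen--Macaulay and flat over the base; their natural rank stratifications have smooth strata of the usual determinantal codimensions; and the closure of a stratum is the union of the strata of smaller or equal ranks. Here the relevant invariants are $h = \rk(\pi\colon\omega\to\omega)$ and the corank $\ell - h$ of the symmetric matrix governing $\omega_1\cap\omega_2$ on the locus $\dim\pi\omega=h$ (note that $\pi\omega\subseteq\omega_1\cap\omega_2$, whence $h\le\ell$). Transporting back: $X_{h,\ell}$ is non-empty (each chart meets it; alternatively exhibit explicit lattices), smooth, and equidimensional; its codimension in $X$ equals the codimension of corank-$(\ell-h)$ symmetric matrices, namely $\binom{\ell-h+1}{2}=\tfrac{(\ell-h)(\ell-h+1)}{2}$, and since $\dim X_{h,h}=ab$ (computed directly in the charts) one obtains $\dim X_{h,\ell}=ab-\tfrac{(\ell-h)(\ell-h+1)}{2}$. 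The closure relations $\overline{X_{h,\ell}}=\coprod_{0\le h'\le h\le \ell\le\ell'\le a}X_{h',\ell'}$ follow from the semicontinuity of the two ranks and the known stratum closures in the model.

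\emph{Global consequences.} The generic fibre $M\otimes_{O_{F_p}}\Frac(O_{F_p})$ is the smooth projective Pappas--Rapoport variety, of dimension $ab$, and by the previous step $\overline M$ is equidimensional of dimension $ab$ as well; as $M$ is Cohen--Macaulay over the discrete valuation ring $O_{F_p}$ with fibres equidimensional of constant dimension, miracle flatness shows $M$, hence $Y$, is flat over $O_{F_p}$, and therefore over $O_F$. Flatness makes $p$ a non-zero-divisor, so $X=Y\otimes\FF$ is Cohen--Macaulay because $Y$ is, and $Y$ is Cohen--Macaulay because $M$ is. Moreover $X$ is reduced: being Cohen--Macaulay it has no embedded components, and it is generically reduced since it is smooth along $\bigcup_h X_{h,h}$, which by the closure relations meets every irreducible component of $X$. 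For normality of $Y$ we apply Serre's criterion: $S2$ is a consequence of Cohen--Macaulayness, and $R1$ holds because $Y$ is smooth over $O_{F_p}$ — a fortiori regular — away from $\bigcup_{h<\ell}X_{h,\ell}$, a closed set of dimension at most $ab-1$ and hence of codimension at least $2$ in the $(ab+1)$-dimensional $Y$. Finally, the non-smooth locus of $X$ is exactly $\bigcup_{h<\ell}X_{h,\ell}$: along $X_{h,h}$ the model is smooth, whereas at a point of $X_{h,\ell}$ with $h<\ell$ the local dimension is $ab$ but the Zariski tangent space strictly exceeds $ab$ (read off from the chart equations), so $X$ is singular there; thus the smooth locus of $X$ is $\bigcup_{0\le h\le a}X_{h,h}$.

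\emph{Main obstacle.} The crux is the second step: pinning down the exact local equations of $M$ — in particular the way the polarization turns $\omega_2$ into a function of $(\omega,\omega_1)$ — and recognising the resulting scheme as a symmetric/alternating twist of a variety of complexes, so that the Cohen--Macaulay $+$ flat $+$ reduced-fibre package and the determinantal codimension formulas apply directly. Equivalently, the work lies in matching the abstract poset $\{(h,\ell)\colon 0\le h\le\ell\le a\}$ with the rank stratification, and in identifying the triangular-number codimension $\tfrac{(\ell-h)(\ell-h+1)}{2}$ with the codimension of the corank-$(\ell-h)$ locus in a space of symmetric matrices.
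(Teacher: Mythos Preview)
Your plan coincides with the paper's, especially for flatness, normality and Cohen--Macaulayness: the paper also passes to a local model $\mathcal N$ via a smooth correspondence, computes the completed local ring at a point of the closed stratum to be $W_{\mathcal O}(k)[Z,X,Y]/\bigl(Z-{}^tZ,\,(Y+{}^tY+{}^tXX)Z-(\pi_1-\pi_2)I_a\bigr)$, changes variables to recognise the special fibre as $k[Z,T]/(Z-{}^tZ,T-{}^tT,ZT)$ times an affine space, and then quotes \cite{CN} and \cite{DP} for the Cohen--Macaulay/reduced package and the lift of each irreducible component to characteristic zero. So what you flag as your ``main obstacle'' is precisely the crux, and the paper actually carries it out rather than invoking it as a black box; a complete proof along your lines must do the same.

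Two genuine gaps remain in your sketch. First, non-emptiness of the $X_{h,\ell}$ does \emph{not} follow from non-emptiness of the corresponding strata on the local model: the map $\widetilde Y\to \mathcal N$ is formally smooth but you have not argued it is surjective, and ``exhibit explicit lattices'' only yields points of $\mathcal N$, not abelian varieties. The paper instead builds a point of $X_{0,a}$ directly --- take $A=E^a\times(E^c)^b$ for a CM elliptic curve $E$ and choose any $\omega_1\subseteq\omega_A$ isotropic for the modified pairing --- and then reaches all other strata via the closure relations. Second, the paper establishes the closure relations and the description of the singular locus by direct Grothendieck--Messing calculations on $X$ itself: explicit lifts of the Hodge filtration over $k[[t]]$ for the closures, and an explicit $k[\varepsilon]/\varepsilon^2$-point failing to lift to $k[\varepsilon]/\varepsilon^3$ for non-smoothness when $h<\ell$. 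Your proposal to import these from the rank stratification of the symmetric variety of complexes is viable, but it presupposes the same explicit chart computation you have deferred, so no step is actually bypassed.
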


When $F_0 \neq \QQ$, one has a similar description (the index of the stratification being more complicated), all computations reducing to the previous case. Let us stress that in the case of an unramified prime there is only one open stratum (the Rapoport locus) as in the cases considered in \cite{BH2}. In particular, in the situation considered here (i.e. the (AR) case), there is no chance that the Rapoport locus or the $\mu$-ordinary locus is Zariski dense. 
If $p=2$, we have partial similar results, which depend on the class of some pairing. In particular it can happen that some of the strata are empty (see Proposition \ref{prop:6.5}), and in general the smooth locus is more complicated than the union of the open strata (see Section \ref{sect:6}). 

We can then study how the stratification studied previously interacts with "classical" stratification, for example the one induced by the \textit{partial} Hasse invariants. The general result is likely to be overly complicated for combinatorial reasons, so let us describe the situation when $F_0 = \QQ, p \neq 2$ and $(a,b) = (1,n)$, $n \geq 1$. In this case the previous stratification gives only 3 strata, $R = X_{1,1}$, the Rapoport locus, $B = X_{0,0}$ the other open strata and the intersection of their closures $P = X_{0,1}$. We have two partial Hasse invariants $\hasse_1,\hasse_2$ (see Definition \ref{defin:hasse}) and we stratify further these 3 strata depending on the vanishing of these invariants. It turns out that there are restrictions on these vanishings, and we end up with 6 strata, refining the previous stratification : 
\[ R = R_0 \sqcup R_1 \sqcup R_2, \quad B = B_0 \sqcup B_1 \sqcup B_2, \quad P = P_0 \sqcup P_1 \sqcup P_2,\]
with $R_0 = X^{ord}$ the $\mu$-ordinary locus, $R_0,P_0,B_0$ the locus of non vanishing of both the partial Hasse invariants, and $R_2,P_2$ the locus of vanishing of both partial Hasse invariants (see Section \ref{sect:Hasse1n}). We then have the following description, maybe surprising for $\overline{B_1}$ and when $n=1$. 

\begin{theor}
If $n = 1,2$ the strata $R_1,P_1$ are empty. If $n = 1$ we have,
\[ \overline{X^{ord}} = X^{ord} \cup P_0, \quad \overline{R_2} = R_2 \cup P_2, \quad \overline{B_0} = B_0 \cup B_1 \cup B_2 \cup P_0 \cup P_2,\]
while $X^{ord},R_2,B_0$ are open, and $P_0,P_2,B_1,B_2$ are closed. \\
If $n \geq 2$, then $B_2$ and $P_2$ are closed, and we have the closure relations
\[\overline{X^{ord}} = X^{ord} \cup_{i=1}^2 R_i \cup_{i=0}^2 P_i \qquad  \overline{R_2} = R_2 \cup P_2\]
\[\overline{B_0} = \cup_{i=0}^2 B_i \cup_{i = 0}^2 P_i \qquad \overline{B_1} = B_1 \cup P_1 \cup P_2 \qquad \overline{P_0} = \cup_{i=0}^2 P_i.\]
If $n \geq 3$, one has moreover
\[\overline{R_1} = \cup_{i=1}^2 R_i \cup_{i=1}^2 P_i \qquad \overline{P_1} = P_1 \cup P_2.\]
\end{theor}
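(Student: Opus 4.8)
The plan is to reduce the whole statement to explicit computations in local charts. Recall the explicit local description of $X$ (equivalently, the local model of $Y$ from Definition~\ref{defin:PRmodel}): near any closed point there is an étale neighbourhood on which $\omega$ is free, and the $\pi$-action together with the sub-bundles $\omega_1$ (Definition~\ref{defin:PRmodel}) and $\omega_2$ (Definition~\ref{defin:omega2}) are given by explicit matrices in the coordinate functions; in the same coordinates the two partial Hasse invariants $\hasse_1,\hasse_2$ (Definition~\ref{defin:hasse}) are explicit polynomials, each a section of a line bundle built from the $\det\gr^\bullet\omega$. By the first theorem applied with $a=1$ we already know the coarse picture: $X = R\sqcup B\sqcup P$ with $R=X_{1,1}$, $B=X_{0,0}$ open and smooth of dimension $n$, $P=X_{0,1}$ closed and smooth of dimension $n-1$, and $\overline R = R\sqcup P$, $\overline B = B\sqcup P$. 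Since every refined stratum lies in one of $R,B,P$, it is enough to: (i) describe the vanishing loci of $\hasse_1,\hasse_2$ on $R$ and on $B$, which gives $R_0,R_1,R_2$ and $B_0,B_1,B_2$ as locally closed subsets; (ii) do the same on $P$, giving $P_0,P_1,P_2$; and (iii) for each refined stratum $S\subseteq R$ or $S\subseteq B$, determine $\overline S\cap P$.

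For step (i), on $R$ one has $\pi\omega=\omega_1\subseteq\omega_2$, and I expect that in the local chart one of the two invariants, say $\hasse_2$, is (up to a unit) a single coordinate, so that $R\cap\{\hasse_2=0\}$ is a smooth hypersurface, while $\hasse_1$ restricted to it is cut out by a further equation whose solvability forces the ambient dimension $n$ to exceed an explicit bound; this is exactly what makes $R_1$ — the locus where exactly one of $\hasse_1,\hasse_2$ vanishes — empty for small $n$, and nonempty from $n=3$ on. The same must be carried out on $P$. On $B$, by contrast, $\omega=\omega_1\oplus\omega_2$ and $\pi=0$ identically; because $\pi$ kills $\omega$ one of the Hasse invariants has a genuinely different local shape on $B$ than on $R$, and the three pieces $B_0$ (both non-zero, open and dense in $B$), $B_1$, $B_2$ are read off from the resulting equations. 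It is this asymmetry between $R$ and $B$ that will make $\overline{B_1}$ and $\overline{R_1}$ look different.

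Step (iii) is the heart of the argument. First note the easy halves: $P_2 = P\cap\{\hasse_1=\hasse_2=0\}$ is closed in $X$ as an intersection of closed subsets, and $\overline{P_0},\overline{P_1}$ inside the smooth variety $P$ are computed as in step (i). Now fix $y\in P$ and choose a local chart of $X$ at $y$ in which $P$ is a coordinate subspace and the conditions $\dim\pi\omega=h$, $\dim(\omega_1\cap\omega_2)=\ell$ and the (non)vanishing of $\hasse_1,\hasse_2$ are all explicit; for each $S\in\{R_0,R_1,R_2,B_0,B_1,B_2\}$ one then decides whether $y\in\overline S$, and if so which $P_j$ contains $y$, by exhibiting a one-parameter family in $S$ specialising to $y$ or by obstructing its existence. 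This produces the lists $\overline{R_i}\cap P$ and $\overline{B_i}\cap P$; combined with $\overline S\cap R$ (resp. $\overline S\cap B$), which follows from semicontinuity of the orders of vanishing of $\hasse_1,\hasse_2$ along the smooth varieties $R$ and $B$, and with step (ii), this yields every displayed equality. In particular ``$B_2$ closed'' is precisely the statement that no such family degenerates from $B_2$ into $P$; and for $n=1$, where $X$ is a curve and $P$ is finite, every subset of $P$ is closed and the local analysis of step (i) shows $R$ becomes a disjoint union of the (now clopen) pieces $R_0=X^{ord}$ and $R_2$, which is how the $n=1$ list arises. The distinction between $n=2$ and $n\ge3$ is purely a matter of which of the solvability bounds in (i)--(iii) are met.

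The main obstacle is step (iii), and within it the two assertions flagged as surprising: $\overline{B_1}=B_1\cup P_1\cup P_2$, where the real content is to rule out $P_0\subseteq\overline{B_1}$, and the collapse of $R_1$ and $P_1$ (with the attendant reshuffling of which strata are open or closed) at $n=1,2$. These resist soft dimension arguments and seem to need the precise local equations of the Pappas--Rapoport model at points of $P$, together with care about two degenerate phenomena: on a given stratum a Hasse invariant may be a section of a line bundle that is forced to be trivial there (so the invariant is identically $0$) or to have no zeros (so it is nowhere $0$), so that a naive ``$\{\hasse_i=0\}$'' is empty or everything; and along a degeneration to $P$ the coordinate expression of a Hasse invariant can acquire an extra unit or zero factor from the collapse of $\omega_1\cap\omega_2$, which changes which $P_j$ is reached. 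Once these local pictures are fixed, the remaining assertions — openness of $X^{ord}=R_0$ and of $B_0$, closedness of $P_2$ (and of $B_2$ via step (iii)), and the further open/closed statements in the $n=1$ case — follow formally.
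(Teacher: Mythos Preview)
Your plan rests on a claim that does not hold: that in a local chart of $X$ (or of the Pappas--Rapoport local model) the partial Hasse invariants $\hasse_1,\hasse_2$ become ``explicit polynomials'' in the coordinates parametrizing $\omega_1\subset\omega\subset\mathcal E$. The local model records only the linear-algebra datum of the Hodge filtration with its $\mathcal O_F$-action and pairing; it does not see the Verschiebung $V:\mathcal E\to\omega^{(p)}$ (nor the division-by-$\pi$ map) out of which $\hasse_i$ is built. There is therefore no chart in which the refined strata $R_i,B_i,P_i$ are cut out by polynomial equations in the same variables that describe $\omega_1$ and $\omega$, and your steps (i)--(iii) cannot be carried out as stated.

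The paper supplies the missing device. One introduces the conjugate filtration $\mathcal F_i:=\pi\cdot V^{-1}(\omega_i^{(p)})\subset\mathcal E[\pi]$ and proves that $\hasse_1(x)=0\Leftrightarrow\omega_1=\mathcal F_1$ and $\hasse_2(x)=0\Leftrightarrow\omega_1\subset\mathcal F_2$, with $\mathcal F_2=\mathcal F_1^{\perp'}$. The crucial point (Remark~\ref{remaHdgfrob}) is that over a square-zero thickening in characteristic $p$ the Frobenius twist $\omega^{(p)}$, and hence $\mathcal F_1,\mathcal F_2$, lifts \emph{canonically}, independently of the chosen lift of $\omega$. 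This converts the Hasse conditions into incidence conditions between the line $\omega_1$ and the fixed subspaces $\mathcal F_1,\mathcal F_2$ at each infinitesimal step, and the closure relations are then obtained by explicit Grothendieck--Messing deformations over $k[t]/(t^n)$ exactly as in Proposition~\ref{propclosurestrata}. The emptiness of $R_1,P_1$ for $n\le 2$ is likewise an argument with $\mathcal F_i$: for $n=1$, $b=0$ forces $\omega_1=\omega_2$ hence $\mathcal F_1=\mathcal F_2$; for $n=2$, $\omega_1\subset\mathcal F_2$ plus $\mathcal F_1\subset\omega_2$ with $\omega_1\neq\mathcal F_1$ makes the induced pairing on $\omega_2$ degenerate.

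The obstruction you single out (that $B_2$ is closed, i.e.\ no point of $P_2$ deforms into $B_2$) is exactly where your local-model picture has no traction and the paper's mechanism is essential. Suppose $x\in P_2(k)$ deforms over $k[[t]]$ into $B_2$. Then $\omega_1=\mathcal F_1$ throughout. Writing $e_1$ for a generator of $\omega_1$ and $u=\{e_1,e_1\}$, the identity $\{F_\pi a,b\}=\{a,V_\pi b\}$ together with $V_\pi e_1=\lambda e_1$ (some unit $\lambda$) and $F_\pi\circ V_\pi=\text{unit}\cdot\mathrm{id}$ yields $u=\lambda_0 u^p$ with $\lambda_0\in k^\times$. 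But along such a family $u\in t\,k[[t]]$ (since $x\in P$) and $u\not\equiv 0$ (since the generic point lies in $B$), which is incompatible with $u=\lambda_0 u^p$. A parallel argument with the canonical lift of $\mathcal F_1=\mathcal F_2$ handles the $n=1$ phenomena (e.g.\ that $R_2$ is open and $P_2$ cannot deform into $B_1$ or $B_2$). None of this is visible from the equations of the local model alone.
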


We expect that this combinatorial description of the special fiber will relate to more classical geometric varieties, and hopefully that we will be able to prove some cohomological vanishing of modular forms using the geometry of the model. As a first step, we can already prove that the extra irreducible components in the special fiber, i.e. those which are disjoint from the (generalised) Rapoport locus, do not contribute to modulo $p$ modular forms in sufficiently regular weights. Namely assume $F_0 = \QQ$ and let $\kappa = (k_1 \geq \dots \geq k_a,\ell_1 \geq \dots \geq \ell_b) \in \ZZ^{a+b}$ be a weight (see section \ref{section:modforms}). Then we have the following result.

\begin{theor}
If $h < a$ and if we cannot find $\{i_1 < \dots < i_{a-h}\} \subset \{1,\dots,a\}$ such that 
\[ k_{i_1} = \dots = k_{i_{a-h}} \leq \ell_{b-h+1},\]
then $H^0(\overline{X_{h,h}},\omega^\kappa) = 0$.

\end{theor}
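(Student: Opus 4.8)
The plan is to realise $\overline{X_{h,h}}$ as the source of a proper morphism with flag‑variety generic fibres, to compute the restriction of $\omega^\kappa$ to such a fibre by Borel–Weil–Bott, and to propagate the resulting fibrewise vanishing to the whole of $\overline{X_{h,h}}$ using properness. \textbf{Step 1 (a flag fibration).} Forgetting the Pappas–Rapoport datum (the flag of $\omega$ refining $\omega_1$, together with its $\pi$‑conditions) defines a proper morphism $\rho$ from $\overline{X_{h,h}}$ to the closure of its image in the special fibre of the naive integral model, to which $Y$ maps. Over the locus where $\pi$ acts on $\omega$ with rank exactly $h$, a point of the image determines the $O_F$‑module $\omega$ together with $\pi\omega\subseteq\Ker(\pi\mid\omega)$ (of ranks $h$ and $a+b-h$), and the fibre of $\rho$ parametrises the admissible flags; by the explicit description of the strata obtained above, this fibre is a partial flag variety $\mathcal F$ attached to the rank‑$(a+b-2h)$ space $\Ker(\pi\mid\omega)/\pi\omega$, in particular smooth and geometrically connected, inside which $\{\dim\omega_1\cap\omega_2=h\}$ cuts out a dense open subscheme. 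Hence, over each irreducible component of its image, the geometric generic fibre of $\rho$ is $\mathcal F$. (Over the deeper strata the fibres are Schubert subvarieties of larger flag varieties, but only the generic fibre enters the argument.)

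\textbf{Step 2 (restriction of the automorphic bundle).} On $X_{h,h}$ one has $\pi\omega\subseteq\omega_1\cap\omega_2$ by the Pappas–Rapoport conditions, and equality of ranks forces $\omega_1\cap\omega_2=\pi\omega$; on the fibre $\mathcal F$ the constant bundle $\pi\omega$ (rank $h$) is therefore a subbundle of $\omega_1$, of which the Pappas–Rapoport filtration exhibits the "moving part" (rank $a-h$) as the tautological complete flag of $\mathcal F$, while $\omega_2$, being determined by $\omega_1$ through the polarisation, is expressed through $\pi\omega$ and the tautological bundles as well. Feeding this into the Schur‑type description of $\omega^\kappa$ in terms of $\omega_1$ and $\omega_2$, the restriction $\omega^\kappa|_{\mathcal F}$ becomes a homogeneous vector bundle on $\mathcal F$ whose successive quotients are the line bundles attached — via the branchings $\GL_a\downarrow\GL_{a-h}\times\GL_h$ and $\GL_b\downarrow\GL_{b-h}\times\GL_h$ of the dominant weights $(k_1\ge\cdots\ge k_a)$ and $(\ell_1\ge\cdots\ge\ell_b)$, the two $\GL_h$‑parts being absorbed by the common constant bundle $\pi\omega$ and so irrelevant to positivity — to $T$‑weights of the Levi acting on the tautological flags of $\mathcal F$.

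\textbf{Step 3 (Borel–Weil–Bott).} By Kempf vanishing, for each such quotient line bundle $H^0(\mathcal F,-)$ is non‑zero, and of characteristic‑independent dimension, precisely when the associated weight is dominant; in particular $H^0(\mathcal F,\omega^\kappa|_{\mathcal F})=0$ as soon as every one of these weights is non‑dominant. One then checks that the hypothesis of the theorem — the impossibility of choosing $\{i_1<\cdots<i_{a-h}\}$ with $k_{i_1}=\cdots=k_{i_{a-h}}\le\ell_{b-h+1}$ — is exactly this non‑dominance of all constituents, hence exactly the vanishing $H^0(\mathcal F,\omega^\kappa|_{\mathcal F})=0$. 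Heuristically, forcing $a-h$ of the $k_i$ to coincide collapses the $\GL_{a-h}$‑factor of $\omega^\kappa|_{\mathcal F}$ to a multiple of a determinant, i.e. makes it descend to a Grassmannian quotient of $\mathcal F$, and on that Grassmannian the matching $\ell$‑branch dominates that multiple only when the common value is at most $\ell_{b-h+1}$, the extremal $\ell$‑entry still available after the corresponding $\GL_h$‑branch. Making this matching of branching data precise — in particular seeing why exactly $a-h$ equal $k_i$'s and the single entry $\ell_{b-h+1}$ intervene — is the combinatorial heart of the argument, and the step I expect to require the most care.

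\textbf{Step 4 (globalisation).} Being Cohen–Macaulay and generically smooth (the smooth locus $\bigcup_{h'}X_{h',h'}$ is dense), $X$, and hence $\overline{X_{h,h}}$, is reduced, so we may argue on one irreducible component $X'$ at a time, with integral image $T'=\overline{\rho(X')}$. A torsion section of $\rho_*(\omega^\kappa|_{X'})$ would be killed by a non‑zero function on $T'$ whose pull‑back to the reduced scheme $X'$ is a non‑zero‑divisor; thus $\rho_*(\omega^\kappa|_{X'})$ is torsion‑free, and its generic rank equals $\dim H^0$ of the geometric generic fibre of $\rho|_{X'}$, which is $0$ by Step 3 (the generic point of $T'$ lies over the rank‑exactly‑$h$ locus, so that fibre is $\mathcal F$). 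Hence $\rho_*(\omega^\kappa|_{X'})=0$ and $H^0(X',\omega^\kappa)=0$; since for a reduced scheme $H^0(\overline{X_{h,h}},\omega^\kappa)$ injects into the product of the $H^0(X',\omega^\kappa)$ over its components, we conclude $H^0(\overline{X_{h,h}},\omega^\kappa)=0$.
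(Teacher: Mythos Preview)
Your approach is essentially the paper's: both fix the abelian variety (equivalently, forget the Pappas--Rapoport datum $\omega_1$) and study the resulting Grassmannian $\Gr_{a-h,a+b-2h}(\omega[\pi]/\pi\omega)$, on which the restriction of $\omega^\kappa$ is analysed via Borel--Weil--Bott, the combinatorial hypothesis being precisely the non-dominance of every $P$-highest weight appearing. The one notable organisational difference is in globalisation. The paper does not set up your fibration $\rho$ nor argue with torsion-freeness of $\rho_*$: instead it simply observes that for every closed point $x\in X_{h,h}$ the Grassmannian $\Gr_x$ sits in $\overline{X_{h,h}}$ as a \emph{closed} subscheme passing through $x$, so any section of $\omega^\kappa$ restricts to a global section on $\Gr_x$, which is zero, and hence the section vanishes at $x$; since $X_{h,h}$ is dense and $\overline{X_{h,h}}$ is reduced, the section is zero. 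This bypasses your Step~4 entirely (in particular the question of whether the generic fibre of $\rho|_{X'}$ over a single component $X'$ is the \emph{full} Grassmannian, which you implicitly use but do not verify). Your route is correct, but the paper's pointwise argument is shorter and avoids that technical point.
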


We hope to generalise this result to higher cohomology and less restrictive weights.
In \cite{SYZ}, similar results on the geometry of the mod $p$ fibers of Shimura varieties are proven for the Pappas-Zhu model and its EKOR stratification. It would be interesting to know if our results are related to theirs. More recently, Zachos \cite{Zac22} has studied a similar problem under the assumption $F_0 = \QQ$ restricting to the geometry around points of $X_{0,a}$, but has given an explicite blow-up of the splitting model which has semi-stable reduction, under the extra assumption that $(a,b) = (2,n-2)$. \\

We would like to thank Fabrizio Andreatta for suggesting to have a look at \cite{SYZ}. We would like to heartily thank Ioannis Zachos for pointing a mistake in a first version of this paper. The authors are part of the project ANR-19-CE40-0015 COLOSS.

\section{Case of a quadratic imaginary $F$}

\subsection{Definition of the variety}

Let $F$ be a complex quadratic extension of $\QQ$, and assume that $p$ is ramified in $F$. Let $F_p$ be the completion of $F$ at $p$, and $\pi$ a uniformizer of $F_p$. We write $\sigma_1, \sigma_2$ the embeddings of $F_p$ into $\overline{\QQ_p}$, and let us define $\pi_i = \sigma_i(\pi)$. Let $a,b$ be integers with $a \leq b$, and define $m=a+b$.

\begin{defin}
\label{defin:PRmodel}
Let $Y$ be the moduli space over $O_{F_p}$ whose $R$-points are couples $(A, \lambda, \iota, \eta, \omega_1)$, where
\begin{itemize}
\item $A$ is an abelian scheme over $R$ of dimension $m$,
\item $\lambda$ is a polarization, principal at $p$,
\item $\iota : O_F \to End(A)$, making the Rosati involution and the complex conjugation compatible,
\item $\eta$ is a level structure away from $p$,
\item $\omega_1 \subseteq \omega_A$ is a locally direct factor of rank $a$, stable by $O_F$,
\item $O_F$ acts by $\sigma_1$ on $\omega_1$, and by $\sigma_2$ on $\omega_A / \omega_1$.
\end{itemize}
\end{defin}

Let $\mathcal{E} = H^1_{dR} (A)$; it is a locally free sheaf on $Y$ of rank $2m$. If has an action of $O_F$, and is locally free of rank $m$ over $O_Y \otimes_{\ZZ} O_F$. The Hodge filtration is $\omega_A \subseteq \mathcal{E}$. The sheaf $\mathcal{E}$ has an action of $O_{F_p}$, and let $[a]$ be the action of $a$ on $\mathcal{E}$ for every $a \in O_{F_p}$. The last condition implies that $([\pi] - \pi_1) \omega_1 = 0$ and $([\pi] - \pi_2) \omega \subseteq \omega_1$. \\ 
$ $\\
Thanks to the polarization, one has a perfect pairing on $<,>$ on $\mathcal{E}$. The condition between the Rosati involution and the complex conjugation implies that for all $x,y \in \mathcal{E}$ one has
$$<[a] \cdot x, y> = <x, [\overline{a}] \cdot y>$$
The Hodge filtration is totally isotropic for this pairing. Moreover, the above relation implies that 
$$\mathcal{E} [[\pi] - \pi_i]^\bot = \mathcal{E} [[\pi] - \pi_i]$$
where $\mathcal{E} [[\pi] - \pi_i]$ consists of the elements of $\mathcal{E}$ killed by $[\pi] - \pi_i$.

\begin{rema}
Since $\mathcal{E} [[\pi] - \pi_1]^\bot = \mathcal{E} [[\pi] - \pi_1]$, one has a perfect pairing between $\mathcal{E} [[\pi] - \pi_1]$ and $\mathcal{E} / \mathcal{E} [[\pi] - \pi_1]$. This last sheaf is isomorphic to $\mathcal{E} [[\pi] - \pi_2]$ via the multiplication by $[\pi] - \pi_1$. One has thus an induced pairing between $\mathcal{E} [[\pi] - \pi_1]$ and $\mathcal{E} [[\pi] - \pi_2]$, given by the formula
$$\{ ([\pi] - \pi_2)x , ([\pi] - \pi_1)y \} := <([\pi] - \pi_2)x, y>$$
\end{rema}

\begin{defin}
\label{defin:omega2}
Let us define $\omega_2 \subseteq \mathcal{E}$ by the formula 
$$\omega_2 = (([\pi] - \pi_2)^{-1} \omega_1   )^\bot$$
\end{defin}

\begin{prop}
The sheaf $\omega_2$ is locally free of rank $b$, and one has $\omega_2 \subseteq \omega$. Moreover, one has 
$$([\pi] - \pi_2) \cdot \omega_2 = 0 \qquad ([\pi] - \pi_1) \cdot \omega  \subseteq \omega_2$$
\end{prop}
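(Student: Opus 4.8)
The plan is to work locally on $Y$, where $\mathcal{E}$ is free over $O_Y \otimes_\ZZ O_{F_p}$, and to use the two commuting idempotent-like decompositions coming from the action of $[\pi]$. Set $e_i = [\pi] - \pi_i$ for $i = 1,2$. Since $p \neq 2$ (or more precisely since $\pi_1 \neq \pi_2$ generically, and the pairing is perfect), the submodule $\mathcal{E}[e_1]$ and the quotient $\mathcal{E}/\mathcal{E}[e_1]$ are each locally free, and $e_1$ induces an isomorphism $\mathcal{E}/\mathcal{E}[e_1] \xrightarrow{\sim} \mathcal{E}[e_2]$ as recalled in the Remark. First I would determine the rank of $\omega_2$. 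By definition $\omega_2 = (e_2^{-1}\omega_1)^\bot$, where $e_2^{-1}\omega_1 := \{x \in \mathcal{E} : e_2 x \in \omega_1\}$. Note $\mathcal{E}[e_2] \subseteq e_2^{-1}\omega_1$, and $e_2$ induces an injection $e_2^{-1}\omega_1/\mathcal{E}[e_2] \hookrightarrow \omega_1$; since $\omega_1 \subseteq \omega$ and $e_2 \omega \subseteq \omega_1$ (this is the last condition of Definition \ref{defin:PRmodel}, rewritten: $([\pi]-\pi_2)\omega \subseteq \omega_1$), we get $\omega \subseteq e_2^{-1}\omega_1$, and in fact $e_2^{-1}\omega_1 = \mathcal{E}[e_2] + \omega$ after checking the image of $e_2$ on $\omega$ lands in $\omega_1$ exactly. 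Computing ranks: $\mathcal{E}[e_2]$ has rank $m$ (half of $2m$), and the image $e_2\omega$ has rank $m - \dim\mathcal{E}[e_2]\cap\omega$. A cleaner route is to use the perfect pairing directly: since $<,>$ is perfect on $\mathcal{E}$, $\rk \omega_2 = 2m - \rk(e_2^{-1}\omega_1)$. One computes $\rk(e_2^{-1}\omega_1) = \rk \mathcal{E}[e_2] + \rk(\text{image of } e_2|_{e_2^{-1}\omega_1}) = m + \rk\omega_1' $ where $\omega_1'$ is the saturation issue; being careful with the fact that $\omega$ has rank $m$ and $\omega_1$ rank $a$, one finds $\rk(e_2^{-1}\omega_1) = m + a$, hence $\rk\omega_2 = 2m - (m+a) = m - a = b$. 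Local freeness of $\omega_2$ follows because it is the orthogonal of a locally direct factor under a perfect pairing (one should check $e_2^{-1}\omega_1$ is itself locally a direct factor, which holds since $e_2 : \mathcal{E} \to \mathcal{E}$ has locally free image and kernel and $\omega_1$ is a direct factor of $\omega \subseteq \omega$).

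Next I would prove $e_2 \cdot \omega_2 = 0$, i.e. $\omega_2 \subseteq \mathcal{E}[e_2]$. This is where the compatibility of the pairing with the action is used: for $a \in O_{F_p}$, $<[a]x,y> = <x,[\bar a]y>$. Since $\pi$ is a uniformizer of the ramified extension $F_p$, we have $\bar\pi \equiv \pi$ up to sign/unit in a way that makes $\overline{e_2} = \overline{[\pi]} - \pi_2$; concretely, $\overline{[\pi]-\pi_2}$ acts as $[\bar\pi] - \overline{\pi_2} = [\bar\pi]-\pi_1$ up to the right bookkeeping of $\sigma_1,\sigma_2$ (this is exactly the content of $\mathcal{E}[e_i]^\bot = \mathcal{E}[e_i]$). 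From $\mathcal{E}[e_2]^\bot = \mathcal{E}[e_2]$ and the fact that $e_2^{-1}\omega_1 \supseteq \mathcal{E}[e_2]$, taking orthogonals reverses the inclusion: $\omega_2 = (e_2^{-1}\omega_1)^\bot \subseteq \mathcal{E}[e_2]^\bot = \mathcal{E}[e_2]$, which is precisely $e_2 \omega_2 = 0$. For the inclusion $\omega_2 \subseteq \omega$: since $\omega$ is totally isotropic, $\omega \subseteq \omega^\bot$; and I claim $e_2^{-1}\omega_1 \subseteq \omega^\bot$, equivalently $\omega \subseteq (e_2^{-1}\omega_1)^\bot = \omega_2$ — wait, that gives the reverse; rather the correct chain is $\omega \subseteq e_2^{-1}\omega_1$ (shown above, from $e_2\omega\subseteq\omega_1$), so taking orthogonals $\omega_2 = (e_2^{-1}\omega_1)^\bot \subseteq \omega^\bot = \omega$ (the last equality since $\omega$ is a totally isotropic direct factor of rank $m = \frac{1}{2}\cdot 2m$, hence maximal isotropic, so $\omega^\bot = \omega$). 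That settles $\omega_2 \subseteq \omega$.

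Finally, for $e_1 \cdot \omega \subseteq \omega_2$: I must show $([\pi]-\pi_1)\omega \subseteq (e_2^{-1}\omega_1)^\bot$, i.e. for every $x \in \omega$ and every $z$ with $e_2 z \in \omega_1$, that $<e_1 x, z> = 0$. Using $<[a]u,v> = <u,[\bar a]v>$ we move $e_1$ across the pairing: $<e_1 x, z> = <x, \overline{e_1}\, z>$ where $\overline{e_1}$ is the action of $\overline{[\pi]-\pi_1} = [\bar\pi] - \pi_1$; by the conjugation bookkeeping this equals (a unit times) $e_2 z$ up to an element that is killed appropriately, so $<x, \overline{e_1} z> $ reduces to $\pm<x, e_2 z>$ with $e_2 z \in \omega_1 \subseteq \omega$, which vanishes since $\omega$ is totally isotropic. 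The main obstacle is the careful identification of $\overline{e_1} = \overline{[\pi]} - \pi_1$ with (a unit multiple of) $e_2$ on $\mathcal{E}$ — this rests on the relation $\mathcal{E}[e_i]^\bot = \mathcal{E}[e_i]$ and the precise behavior of complex conjugation on the uniformizer $\pi$ in the ramified local field $F_p$ (where $\pi$ and $\bar\pi$ generate the same maximal ideal but need not be equal), together with keeping track of which embedding $\sigma_i$ is which after conjugation. Once that algebraic dictionary is set up cleanly, all three assertions fall out of "$\omega$ is maximal isotropic" plus "$\mathcal{E}[e_i]$ is its own orthogonal."
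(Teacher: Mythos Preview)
Your argument is correct. For $\omega_2 \subseteq \omega$ and $([\pi]-\pi_2)\omega_2 = 0$ it coincides with the paper's: both follow from the chain $\mathcal{E}[e_2] \subseteq \omega \subseteq e_2^{-1}\omega_1$ by taking orthogonals, using $\omega^\bot = \omega$ and $\mathcal{E}[e_2]^\bot = \mathcal{E}[e_2]$. (The paper's proof is in fact silent on the rank and local freeness of $\omega_2$; your sketch there is right once you observe that $e_2$ induces an isomorphism $e_2^{-1}\omega_1/\mathcal{E}[e_2] \simeq \omega_1$, surjectivity coming from $\omega_1 \subseteq \mathcal{E}[e_1] = e_2\mathcal{E}$, so that $e_2^{-1}\omega_1$ is a locally direct summand of rank $m+a$.)

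For the last inclusion the routes differ. The paper proves the sharper equality $e_1 \cdot \omega_1^\perp = \omega_2$ (equivalently $\omega_1^\perp = e_1^{-1}\omega_2$) by a short direct computation, and then concludes from $\omega = \omega^\perp \subseteq \omega_1^\perp$; this equality is reused later when identifying $\omega_2$ with the orthogonal of $\omega_1$ for the modified pairing on $\mathcal{E}[\pi]$. You instead verify $<e_1 x, z> = 0$ directly for $x \in \omega$ and $z \in e_2^{-1}\omega_1$, which is more economical if one only wants the bare inclusion. Your hesitation about the ``conjugation bookkeeping'' is unnecessary and no unit ambiguity arises: since $\bar\pi$ is the Galois conjugate of $\pi$ over $\QQ_p$, the trace relation gives $\pi + \bar\pi = \pi_1 + \pi_2$, hence as operators $[\bar\pi] - \pi_1 = -([\pi]-\pi_2) = -e_2$ exactly. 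Thus $<e_1 x, z> = <x, ([\bar\pi]-\pi_1)z> = -<x, e_2 z>$, which vanishes because $e_2 z \in \omega_1 \subseteq \omega$ and $\omega$ is totally isotropic.
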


\begin{proof}
From the properties satisfied by $\omega_1$, one has $\omega \subseteq ([\pi] - \pi_2)^{-1} \omega_1$. Taking the orthogonal of this relation (ans using that $\omega^\perp = \omega$), one finds the relation $\omega_2 \subseteq \omega$. \\
One has $\mathcal{E} [[\pi] - \pi_2] \subseteq \omega_2^\perp$, and taking the orthogonal gives $\omega_2 \subseteq \mathcal{E} [[\pi] - \pi_2]$. In other words, $([\pi] - \pi_2) \cdot \omega_2 = 0 $. \\
For the last point, we first claim that $([\pi] - \pi_1) \cdot \omega_1^\perp = \omega_2$. Indeed, let $x \in \omega_2$; since it belongs to $\mathcal{E} [[\pi] - \pi_2]$ there exists $x' \in \mathcal{E}$ such that $x= ([\pi] - \pi_1) x'$. Then
\begin{equation*} 
\begin{split}
x \in \omega_2 & \Leftrightarrow \quad <x,y>=0  \quad \forall y \in ([\pi] - \pi_2)^{-1} \omega_1 \\ 
& \Leftrightarrow \quad <x', ([\pi] - \pi_2)y > = 0 \quad \forall y \in ([\pi] - \pi_2)^{-1} \omega_1  \\
& \Leftrightarrow \quad x' \in \omega_1^\perp
\end{split}
\end{equation*} 

\noindent One thus has $([\pi] - \pi_1) \cdot \omega_1^\perp = \omega_2$, or equivalently $\omega_1^\perp = ([\pi] - \pi_1)^{-1} \omega_2$. The inclusion $\omega_1 \subseteq \omega$ then implies that $\omega \subseteq ([\pi] - \pi_1)^{-1} \omega_2$. In other words, $([\pi] - \pi_1) \cdot \omega  \subseteq \omega_2$.
\end{proof}

\subsection{Geometry of the special fiber}

Let $X$ be the special fiber of $Y$. Over $X$, the sheaf $\mathcal{E}$ is locally free of rank $m$ over $O_{X} [\pi] / \pi^2$. The sheaves $\omega_1, \omega_2$ are in $\mathcal{E} [\pi]$, and contain $\pi \cdot \omega$.

\begin{rema}
The sheaf $\mathcal{E} [\pi]$ is totally isotropic, but is endowed with a perfect modified pairing given by
$$\{ \pi x, \pi y \} := <\pi x,y>$$
This pairing is \textbf{symmetric}; indeed since $\overline{\pi} = - \pi$ in the residue field of $F$, one has
$$\{\pi y, \pi x\} = <\pi y , x> = <y, \overline{\pi} x> = <\pi x , y> = \{\pi x, \pi y \}.$$
If we want to denote the orthogonal of a subspace $\mathcal F \subset \mathcal E[\pi]$ for this new pairing, we denote it by $\mathcal F^{\perp'}$ to highlight the difference with the usual pairing $<,>$, where we use the notation $\mathcal F^{\perp}$.
\end{rema}

\begin{defin}
Let $k$ be a field in characteristic $p$, and let $x \in X(k)$. Let us define the integers $(h(x), l(x))$ as the dimension of $\pi \cdot \omega$, and $\omega_1 \cap \omega_2$ respectively.
\end{defin}

\begin{rema}
From the previous section, one gets that $\omega_2$ is the orthogonal of $\omega_1$ in $\mathcal{E} [\pi]$, for the modified pairing.
\end{rema}

\begin{prop}
Let $k$ be a field of characteristic $p$, and let $x \in X(k)$. Then one has 
$$0 \leq h(x) \leq l(x) \leq a$$
\end{prop}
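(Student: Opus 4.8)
The plan is to work entirely in the $k$-vector space $V := \mathcal{E}[\pi] \otimes k$, which carries the symmetric perfect pairing $\{\cdot,\cdot\}$ from the Remark, and to recall from the discussion above that $\omega_2 = \omega_1^{\perp'}$ inside $V$. We are given three subspaces of $V$: the conormal $\omega = \omega_A \otimes k$ (of rank $m = a+b$ in $\mathcal{E}$, but lying in $\mathcal{E}[\pi]$ only after projecting — actually $\pi\omega \subseteq \mathcal{E}[\pi]$ and $\omega \cap \mathcal{E}[\pi]$ is the relevant object), together with $\omega_1$ of rank $a$ and $\omega_2$ of rank $b$, all three containing $\pi\omega$. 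Set $h = h(x) = \dim \pi\omega$ and $\ell = \ell(x) = \dim(\omega_1 \cap \omega_2)$. The inequality $0 \le h$ is trivial. For $h \le \ell$: since $\pi\omega \subseteq \omega_1$ and $\pi\omega \subseteq \omega_2$ (the Proposition on $\omega_2$ gives $([\pi]-\pi_1)\omega \subseteq \omega_2$, which on the special fiber is $\pi\omega \subseteq \omega_2$, and the analogous statement for $\omega_1$ is in Definition~\ref{defin:PRmodel}), we get $\pi\omega \subseteq \omega_1 \cap \omega_2$, hence $h \le \ell$.

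**The inequality $\ell \le a$.** This is the one requiring the pairing. Since $\omega_2 = \omega_1^{\perp'}$ for the symmetric nondegenerate form $\{\cdot,\cdot\}$ on $V$, the intersection $\omega_1 \cap \omega_2 = \omega_1 \cap \omega_1^{\perp'} = \operatorname{rad}(\{\cdot,\cdot\}|_{\omega_1})$ is exactly the radical of the restriction of the form to $\omega_1$. A radical of a symmetric bilinear form on a space of dimension $a = \dim\omega_1$ has dimension at most $a$, so $\ell \le a$ immediately. One should double-check that $\dim\omega_1 = a$ genuinely holds over the field $k$ (this is the "locally direct factor of rank $a$" condition in Definition~\ref{defin:PRmodel}, which specializes correctly), and that $\omega_2 = \omega_1^{\perp'}$ is the correct identification on the special fiber — this is asserted in the Remark just before the Definition of $(h(x),\ell(x))$, so I would simply cite it.

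**Main obstacle.** The genuinely delicate point is not any of the three inequalities in isolation but making sure the identification $\omega_2 = \omega_1^{\perp'}$ (orthogonal inside $\mathcal{E}[\pi]$ for the \emph{modified} symmetric pairing) survives base change to the field $k$ and is compatible with the integral definition $\omega_2 = (([\pi]-\pi_2)^{-1}\omega_1)^\perp$. The subtlety is that $\mathcal{E}[\pi]$ over $X$ need not be a direct factor of $\mathcal{E}$, and the modified pairing is only defined after the quotient-and-twist construction in the Remark following Definition~\ref{defin:omega2}; one must verify this construction is well-behaved after specialization to $k$. Granting that (which the earlier Remarks assert), the rest is the one-line linear algebra fact that $\omega_1 \cap \omega_1^{\perp'}$ has dimension at most $\dim\omega_1 = a$, together with the containments $\pi\omega \subseteq \omega_1, \omega_2$ giving $h \le \ell$.
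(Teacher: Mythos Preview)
Your proof is correct, and the paper itself states this proposition without proof, treating it as immediate from the surrounding definitions. Your argument for $h \le \ell$ via $\pi\omega \subseteq \omega_1 \cap \omega_2$ is exactly the intended one (both inclusions $\pi\omega \subseteq \omega_i$ are the special-fiber versions of $([\pi]-\pi_j)\omega \subseteq \omega_i$ once $\pi_1 = \pi_2 = 0$).

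One comment: your treatment of $\ell \le a$ is over-engineered. You invoke $\omega_2 = \omega_1^{\perp'}$ and describe $\omega_1 \cap \omega_2$ as the radical of the form on $\omega_1$, then observe that the radical has dimension at most $a$. But this last step is just $\omega_1 \cap \omega_2 \subseteq \omega_1$, which needs nothing about the pairing. Likewise, your ``Main obstacle'' paragraph worries about base-change compatibility of the identification $\omega_2 = \omega_1^{\perp'}$, but the inequality $\ell \le a$ does not depend on that identification at all. The pairing description is useful elsewhere (e.g.\ for the stratification and closure relations), but for this particular proposition it can be dropped entirely.
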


The integers $h(x), l(x)$ will allow us to define a stratification on $\overline{X}$. Indeed, one has 
$$X = \coprod_{0 \leq h \leq l \leq a} X_{h,l} $$
where $X_{h,l}$ consists in the points $x$ with $(h(x), l(x)) = (h,l)$. 

\begin{prop}
Let $(h,l)$ be integers with $0 \leq h \leq l \leq a$, and let  $\overline{X_{h,l}}$ be the closure of $X_{h,l}$. Then 
$$\overline{X_{h,l}} \subseteq \coprod_{0 \leq h' \leq h \leq l \leq l' \leq a} X_{h',l'}$$
\end{prop}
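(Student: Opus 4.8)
The plan is to prove the claimed containment by a deformation/semicontinuity argument: $h(x)$ and $l(x)$ should each be upper semicontinuous functions on $X$ (in the Zariski topology), so that on the closure of $X_{h,l}$ the pair $(h',l')$ of a specialization satisfies $h' \geq h$... wait, let me reconsider the direction. We want $\overline{X_{h,l}} \subseteq \coprod_{h' \leq h \leq l \leq l'} X_{h',l'}$, so specializations have smaller $h$ and larger $l$. Thus $h$ is \emph{lower} semicontinuous and $l$ is \emph{upper} semicontinuous on $X$. Concretely, first I would recall that $\dim \pi\omega = \dim \omega - \dim \omega[\pi] = a - \dim(\omega \cap \mathcal E[\pi])$ wait, $\omega$ has rank... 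Let me recompute: $\omega = \omega_A$ has rank $m = a+b$ over $O_X$, and $\pi\omega \subseteq \mathcal E[\pi]$, with $\pi\omega \cong \omega/\omega[\pi]$. Since $\pi \cdot \mathcal E$ is locally free of rank... hmm, over $O_X[\pi]/\pi^2$ the module $\mathcal E$ is locally free of rank $m$, so $\mathcal E[\pi] = \pi\mathcal E$ has rank $m$ over $O_X$, and $\omega \supseteq \pi\omega$; the quotient $\omega/\pi\omega$ injects into $\mathcal E/\pi\mathcal E$ which has rank $m$. So $\dim\pi\omega = m - \dim(\omega/\pi\omega)$, and $\dim(\omega/\pi\omega)$ is upper semicontinuous (rank of a quotient of coherent sheaves jumps up on closed sets), hence $\dim\pi\omega = h$ is lower semicontinuous. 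Good — that handles $h' \leq h$.

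Next, for $l = \dim(\omega_1 \cap \omega_2)$: here $\omega_1$ has constant rank $a$ and $\omega_2$ has constant rank $b$, both locally direct factors of $\mathcal E[\pi]$ which has rank $m = a+b$; the intersection $\omega_1\cap\omega_2$ is the kernel of the map $\omega_1 \to \mathcal E[\pi]/\omega_2$ between a rank-$a$ bundle and a rank-$a$ bundle, so $\dim(\omega_1\cap\omega_2) = a - \rk(\omega_1 \to \mathcal E[\pi]/\omega_2)$, and the rank of a map of vector bundles is lower semicontinuous, so $l$ is upper semicontinuous. That gives $l' \geq l$ on the closure. Combining, every point $x'$ of $\overline{X_{h,l}}$ has $h(x') \leq h$ and $l(x') \geq l$; together with the already-established inequalities $0 \leq h(x') \leq l(x') \leq a$ from the previous Proposition, this places $x'$ in some $X_{h',l'}$ with $h' \leq h \leq l \leq l' \leq a$ (the middle inequality $h \leq l$ being a hypothesis), which is exactly the asserted containment.

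There is one subtlety to address cleanly: semicontinuity statements of this type are about the topological space $X$, and $X_{h,l}$ is defined via $k$-points; I would phrase everything scheme-theoretically, defining $X_{h,l}$ as the locally closed subscheme cut out by the Fitting-ideal conditions "$\dim \pi\omega \geq h$" (closed) intersected with "$\dim\pi\omega \leq h$" (open), and likewise for $l$, using the Fitting ideals of the cokernels of the two bundle maps above; then the claim "$\dim\pi\omega \geq h$ on a closed set" and "$\dim(\omega_1\cap\omega_2)\geq l$ on a closed set" are immediate from the Fitting-ideal characterization of the locus where a map of finite type modules has small enough rank. Since $X$ need not be reduced a priori, one should note that the underlying set of $\overline{X_{h,l}}$ only sees the reduced structure, and the semicontinuity argument on the level of sets is unaffected.

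The main obstacle, or rather the only real content beyond bookkeeping, is to make sure the two "dimension" functions are genuinely expressed as ranks (or coranks) of morphisms of \emph{locally free} sheaves so that the standard semicontinuity of Fitting ideals applies without hypotheses on $X$; in particular one must use that $\mathcal E[\pi]$ is locally free of rank $m$ over $O_X$ (stated at the start of the subsection) and that $\omega_1,\omega_2$ are locally direct factors of it, so that $\mathcal E[\pi]/\omega_2$ and $\omega/\pi\omega \hookrightarrow \mathcal E/\pi\mathcal E$ are handled by vector-bundle arguments rather than by arguments about arbitrary coherent sheaves. Once that is in place the proof is a two-line semicontinuity argument plus the previous Proposition. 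I do not expect to need any deformation theory of abelian schemes here — this is purely a statement about the coherent sheaves $\omega_1,\omega_2,\omega,\mathcal E$ already living on $X$.
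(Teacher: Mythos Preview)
Your proposal is correct and takes essentially the same approach as the paper: both argue that $h=\dim\pi\omega$ decreases under specialization (lower semicontinuous) while $l=\dim(\omega_1\cap\omega_2)$ increases under specialization (upper semicontinuous). The paper's proof is a two-sentence version of your argument, simply asserting these semicontinuity facts without spelling out the Fitting-ideal or vector-bundle justifications you provide.
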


\begin{proof}
The integer $h$ is equal to the dimension of $\pi \cdot \omega$. It thus decreases by specialization. The integer $l$ is equal to the dimension of $\omega_1 \cap \omega_2$. This quantity increases by specialization.
\end{proof}

\noindent In particular, the stratum $X_{0,a}$ is closed and the strata $X_{h,h}$ are open, for every $0 \leq h \leq a$.

\subsection{A remark on deformations of a $p$-divisible group}

Let $G$ be a $p$-divisible group over $k$, a field of characteristic $p$. Let $R = k[[t]]$ and $R_n = k[t]/(t^n)$, with the obvious maps. Let $\mathbb D$ the crystal of $G$ and $\mathcal E = \mathbb D_{k \fleche k}$. There are no divided powers on $k[[t]] \fleche k$, thus we can't a priori evaluate $\mathbb D$ on $k[[t]]$. Let $\widetilde{\mathcal E} = \mathcal E\otimes_k k[[t]]$. We denote $\omega \subset \mathcal E$ the Hodge filtration of $G$ (with extra structure).

\begin{prop}
Let $\widetilde{\omega} \subset \widetilde{\mathcal E}$ be a locally direct factor lifting $\omega \subset \mathcal E$. Then there exists a $p$-divisible group $\widetilde G$ over $k[[t]]$ lifting $G$ (with extra-structure), such that, when evaluating $\mathbb D(\widetilde G)_{k[[t]] \fleche k[[t]]} = \widetilde{\mathcal E}$, and the Hodge filtration is given by $\widetilde\omega$.
\end{prop}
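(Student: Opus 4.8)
The plan is to deduce this from Grothendieck--Messing deformation theory applied along the truncations $R_n = k[t]/(t^n)$, together with a formal algebraization step passing from the compatible system $(\widetilde G_n)_n$ to a $p$-divisible group $\widetilde G$ over $R=k[[t]]$. The crucial point is that, although $R\fleche k$ carries no divided powers, each of the surjections $R_n\fleche R_{n-1}$ does: its kernel $(t^{n-1})/(t^n)$ is square-zero in $R_n$ (since $2(n-1)\geq n$ for $n\geq 2$), and every square-zero ideal carries a canonical PD-nilpotent divided power structure (with $\gamma_i = 0$ for $i\geq 2$), to which Grothendieck--Messing applies.

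First I would set $\widetilde{\mathcal E}_n = \mathcal E\otimes_k R_n$ and $\widetilde\omega_n = \widetilde\omega\otimes_{k[[t]]}R_n\subset\widetilde{\mathcal E}_n$, a locally direct factor lifting $\omega$ which inherits from $\widetilde\omega$ all the relevant extra-structure compatibilities ($O_F$-stability, isotropy for the pairing, and the $\sigma_i$-eigenspace conditions of Definition~\ref{defin:PRmodel}). Then, by induction on $n$, I would construct a $p$-divisible group $\widetilde G_n$ over $R_n$ with the PEL extra structure, lifting $\widetilde G_{n-1}$, together with an isomorphism $\phi_n\colon\mathbb D(\widetilde G_n)_{R_n\fleche R_n}\xrightarrow{\sim}\widetilde{\mathcal E}_n$ compatible with the extra structure, reducing to $\phi_{n-1}$ modulo $t^{n-1}$ (and to $\mathrm{id}_{\mathcal E}$ modulo $t$), and carrying the Hodge filtration of $\widetilde G_n$ onto $\widetilde\omega_n$. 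For the inductive step, evaluate the Dieudonné crystal of $\widetilde G_{n-1}$ on the PD thickening $R_n\twoheadrightarrow R_{n-1}$; this produces a locally free $R_n$-module $\mathcal M_n$ whose reduction modulo $t^{n-1}$ is $\mathbb D(\widetilde G_{n-1})_{R_{n-1}\fleche R_{n-1}}$, hence is identified via $\phi_{n-1}$ with $\widetilde{\mathcal E}_{n-1}$. As $R_n$ is local, $\mathcal M_n$ is free and its extra structure is split, so one can extend $\phi_{n-1}$ to an isomorphism $\phi_n\colon\mathcal M_n\xrightarrow{\sim}\widetilde{\mathcal E}_n$ compatible with the extra structure. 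Transporting $\widetilde\omega_n$ back through $\phi_n$ gives a locally direct factor of $\mathcal M_n$ which lifts the Hodge filtration of $\widetilde G_{n-1}$ and satisfies the PEL conditions; by the polarized, $O_F$-equivariant version of Grothendieck--Messing (as developed in the PEL setting in \cite{Lan}) this datum defines $\widetilde G_n$, with $\mathbb D(\widetilde G_n)_{R_n\fleche R_n}=\mathcal M_n$ and Hodge filtration $\phi_n^{-1}(\widetilde\omega_n)$. The polarization and the $O_F$-action lift by the functoriality of Grothendieck--Messing, using respectively the isotropy of $\widetilde\omega_n$ (with principality, to match the dual deformation) and its $O_F$-stability.

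To conclude, I would assemble $(\widetilde G_n)_n$. Each $\widetilde G_n[p^m]$ is a finite flat, hence proper, group scheme over $R_n$, so by Grothendieck's formal existence theorem over the complete Noetherian local ring $R=k[[t]]$ these glue, compatibly in $m$ and with the group law, polarization and $O_F$-action, into a $p$-divisible group $\widetilde G$ over $R$ with PEL structure satisfying $\widetilde G\times_R R_n=\widetilde G_n$; in particular $\widetilde G$ lifts $G$. Since the evaluation of the Dieudonné crystal on the trivial thickening — that is, $H^1_{\mathrm{dR}}$ — and the Hodge filtration of a $p$-divisible group both commute with base change, the isomorphisms $\phi_n$ are the reductions modulo $t^n$ of a single isomorphism $\phi\colon\mathbb D(\widetilde G)_{k[[t]]\fleche k[[t]]}\xrightarrow{\sim}\widetilde{\mathcal E}$ carrying the Hodge filtration of $\widetilde G$ onto $\widetilde\omega$, which is the assertion.

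The main obstacle is organisational rather than conceptual: one must propagate the identifications $\phi_n$ and the extra structure coherently through the induction so that the chosen lifts of the Hodge filtration genuinely chain up — this is exactly what allows Grothendieck--Messing to operate despite the absence of divided powers on $R\fleche k$. A secondary point requiring (standard) care is the precise form of Grothendieck--Messing with polarization and $O_F$-action and the base-change compatibilities used in the algebraization step. Alternatively, one may package the argument via pro-representability: Grothendieck--Messing shows the PEL deformation functor of $G$ is formally smooth, hence pro-represented by a power series ring $\mathcal R$ over $k$; the filtration $\widetilde\omega$ defines, through its reductions modulo $t^n$, a compatible system of deformations over the $R_n$, i.e. a $k$-algebra homomorphism $\mathcal R\fleche k[[t]]$, and $\widetilde G$ is obtained by pulling back the universal deformation along it.
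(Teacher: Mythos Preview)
Your proof is correct and follows the same inductive strategy as the paper's: climb the tower $R_n = k[t]/(t^n)$ using the nilpotent PD-structure on each $R_{n+1}\twoheadrightarrow R_n$, lift the Hodge filtration at every stage, then pass to the limit over $k[[t]]$. The difference lies in the formalism. The paper works through Zink's (Dieudonn\'e) displays: setting $P_n = P\otimes_{W(k)} W(R_n)$ gives a \emph{canonical} module at each level, and the identification $\mathbb D(G_{n+1})_{R_{n+1}\to R_{n+1}} = P_{n+1}/I_{R_{n+1}}P_{n+1} = \widetilde{\mathcal E}\otimes_R R_{n+1}$ comes for free from Lau's comparison \cite{LauRel}, so no auxiliary isomorphisms need to be chosen. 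Your route through Grothendieck--Messing on crystals is more elementary in that it avoids displays altogether, but the price is exactly the organisational burden you identify: one must produce and thread the isomorphisms $\phi_n\colon\mathcal M_n\xrightarrow{\sim}\widetilde{\mathcal E}_n$ by hand, which works because the automorphism group of the $(O_F,\langle\,,\,\rangle)$-structure is smooth over the local Artinian rings $R_n$. Your alternative sketch via pro-representability of the PEL deformation functor is also fine and perhaps the cleanest way to phrase the coherence.
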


\begin{proof}
All displays are Dieudonne displays (which we can consider since our ring is $R_n = k[t]/(t^n)$ or $R = k[[t]]$). We also denote $W(R)$ instead of $\hat{W}(R)$. Let $P$ be the Display of $G$, and set $\widetilde P = P \otimes_{W(k)} W(R)$ and $P_n = P\otimes_{W(k)} W(k[t]/(t^n)) = \widetilde P \otimes_{W(k[[t]])} W(k[t]/(t^n))$.
The map $k[[t]]/(t^n) \fleche k[[t]]/(t^{n-1})$ is endowed with (nilpotent) divided powers. In particular, for $n=2$, we get from $\widetilde\omega \otimes_{k[[t]]} k[t]/(t^2) \subset \widetilde E \otimes_{k[[t]]} k[t]/(t^2)$ a lift of the Hodge filtration which induces the existence of a display $\mathcal P_2$ with $W(R_2)$-module $P_2$, itself corresponding to a $p$-divisible group $G_2$ over $R_2$ lifting $G$, with $\mathbb D(G_2)_{R_2 \fleche R_2} = \mathbb D(G)_{R_2 \fleche k} = P_2$ and corresponding Hodge filtration (cf \cite{Zink2} Theorem 4,\cite{Mess}).
Assume the corresponding result at rank $n$. In particular we have a display $\mathcal P_n$, with module $P_n$ and Hodge filtration corresponding to 
$\widetilde w \otimes_R R_n \subset P_n/I_{R_n}P_n = \widetilde{\mathcal E} \otimes_R R_n$. As the map corresponding to $R_{n+1} \fleche R_n$ has divided powers 
and we have a $R_{n+1}$-triple (by base change), the lift of the Hodge filtration 
$\widetilde w \otimes R_{n+1} \subset \widetilde{\mathcal E} \otimes R_{n+1} = P_{n+1}/I_{R_{n+1}}P_{n+1}$ induces a lift $\mathcal P_{n+1}$ of $\mathcal P_n$, 
with Hodge filtration determined by $\widetilde \omega$. To $\mathcal P_{n+1}$ by \cite{Zink2} Theorem 20 we have an associated $p$-divisible group $G_{n+1}$ over $R_{n+1}$. 
Moreover, by \cite{LauRel} Theorem B, we have 
$\mathbb D(G_{n+1})_{k[t]/(t^{n+1}) \fleche k[t]/(t^{n+1})} = \mathbb D(G)_{k[[t]]/(t^n)\fleche k[[t]]/(t^n)} = \mathbb D(\mathcal P_{n+1})_{k[[t]]/(t^n)} =
 \mathcal P_{n+1}/I_{R_{n+1}}P_{n+1}= \widetilde{\mathcal E} \otimes_R R_{n+1}$ with compatibility with the Hodge filtration. We then set $G = \varprojlim G_n$, a $p$-divisible group over $k[[t]]$, satisfying the desired assumptions.
\end{proof} 

Using this proposition, by abuse we call $\widetilde{\mathcal E}$ the evaluation of $\mathcal D(G)$ on $k[[t]]$.

\subsection{Closure relations and geometry of strata}

In this section, we assume that $p \neq 2$. Let us prove a first proposition about the closure relations for the strata.

\begin{prop}\label{propclosurestrata}
Let $(h,l)$ be integers with $0 \leq h \leq l \leq a$. One has
\[\overline{X_{h,l}} = \coprod_{0 \leq h' \leq h \leq l \leq l' \leq a} X_{h',l'}.\]
\end{prop}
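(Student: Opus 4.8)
We already have one inclusion $\overline{X_{h,l}} \subseteq \coprod_{h'\le h\le l\le l'} X_{h',l'}$ from the previous proposition (semicontinuity of $h$ and $l$). So the plan is to prove the reverse inclusion: for every pair $(h',l')$ with $0\le h'\le h\le l\le l'\le a$, the stratum $X_{h',l'}$ lies in the closure of $X_{h,l}$. Since the poset relation is generated by the two ``covering'' moves $h\mapsto h-1$ (with $l$ fixed) and $l\mapsto l+1$ (with $h$ fixed), and closure is transitive, it suffices to treat these two elementary degenerations; but in practice it is cleaner to do it in one shot: given a geometric point $x\in X_{h',l'}(k)$, I will construct a one-parameter family over $k[[t]]$, i.e.\ a $k[[t]]$-point of $X$ (or of $\overline{X_{h,l}}$) whose special fiber is $x$ and whose generic fiber lies in $X_{h,l}$.

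**Construction of the family.** Let $x\in X_{h',l'}(k)$ correspond to $(A,\lambda,\iota,\eta,\omega_1)$ with associated $\mathcal E=\mathcal E_x$, a free $k[\pi]/\pi^2$-module of rank $m$, equipped with the symmetric perfect ``modified'' pairing $\{\,,\,\}$ on $\mathcal E[\pi]$, and with $\omega\subseteq \mathcal E[\pi]$, $\omega_1\subseteq\omega$ of rank $a$, $\omega_2=\omega_1^{\perp'}$ of rank $b$, satisfying $\pi\omega\subseteq\omega_1\cap\omega_2$, $\dim\pi\omega=h'$, $\dim\omega_1\cap\omega_2=l'$. Using the deformation proposition of the previous subsection (applied to the $p$-divisible group of $A$ with its $O_F$- and polarization structure), it is enough to produce, inside $\widetilde{\mathcal E}=\mathcal E\otimes_k k[[t]]$, a locally direct factor $\widetilde\omega$ lifting $\omega$, together with a rank-$a$ direct summand $\widetilde\omega_1\subseteq\widetilde\omega$ lifting $\omega_1$, such that $\widetilde\omega_1$ is stable under $[\pi]$, satisfies $([\pi]-\pi_1)\widetilde\omega_1=0$ and $([\pi]-\pi_2)\widetilde\omega\subseteq\widetilde\omega_1$ (so that the deformation carries a point of $Y$, hence a $k((t))$-point of the generic fiber which is automatically a $k[[t]]$-point of $X$ after we note we stay in characteristic $p$ — more precisely we build the family directly over $X=Y\otimes k$, working with $\widetilde\omega_1\subseteq\widetilde{\mathcal E}[\,\pi\,]$ and $\pi\widetilde\omega\subseteq\widetilde\omega_1\cap\widetilde\omega_2$, $\widetilde\omega_2=\widetilde\omega_1^{\perp'}$), and such that at the generic point $\eta=\Spec k((t))$ one has $\dim_{k((t))}\pi\widetilde\omega_\eta=h$ and $\dim_{k((t))}(\widetilde\omega_1\cap\widetilde\omega_2)_\eta=l$. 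Concretely one chooses a basis of $\mathcal E[\pi]$ adapted to the flags $\pi\omega\subseteq\omega_1\cap\omega_2\subseteq\omega_1,\omega_2\subseteq\omega\subseteq\mathcal E[\pi]$ and to the symmetric form (here $p\ne2$ is used, to diagonalise the form and to split $\mathcal E[\pi]$ orthogonally), and then writes down explicit $t$-dependent generators: to raise $l$ one rotates a basis vector of a complement of $\omega_1\cap\omega_2$ in $\omega_1$ towards $\omega_2$ by an amount $t$ (keeping track of the constraint $\omega_2=\omega_1^{\perp'}$, which is automatic), and to lower $h=\dim\pi\omega$ one perturbs the lift $\widetilde\omega$ of $\omega$ inside $\widetilde{\mathcal E}$ so that a vector of $\omega$ which was in $\mathcal E[\pi]$ acquires, after lifting, a nonzero $\pi$-component divisible by $t$, thereby decreasing $\dim\pi\widetilde\omega$ generically. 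One must do both perturbations simultaneously and check they are compatible with the isotropy/orthogonality and $O_F$-stability constraints; the constraint $\pi\widetilde\omega\subseteq\widetilde\omega_1\cap\widetilde\omega_2$ is the delicate bookkeeping.

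**The main obstacle.** The genuinely hard part is not the $h$-degeneration or the $l$-degeneration in isolation — each is a short explicit linear-algebra deformation — but rather (i) making a single family realise an arbitrary target $(h,l)$ with $h\le h'$ and $l\ge l'$ simultaneously while respecting \emph{all} the structure (the $k[\pi]/\pi^2$-module structure on $\widetilde{\mathcal E}$, the symmetric perfect pairing, $\widetilde\omega_1\subseteq\widetilde\omega$, $\widetilde\omega_2=\widetilde\omega_1^{\perp'}$, $\pi\widetilde\omega\subseteq\widetilde\omega_1\cap\widetilde\omega_2$), and (ii) verifying that the resulting display actually deforms \emph{$x$} as a point of the PEL moduli problem, i.e.\ that the abstract deformation produced by the proposition of the previous subsection is the one with the prescribed $\widetilde\omega_1$ (this is exactly what that proposition gives). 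I expect the cleanest route is to reduce, via the orthogonal decomposition of $(\mathcal E[\pi],\{\,,\,\})$ afforded by $p\ne2$, to a model situation with $a=l=h+1$ or $l=l'+1$ of minimal size, handle those two ``covering'' cases by an explicit $2\times2$ or $3\times3$ deformation matrix, and then chain them using transitivity of the closure operation together with the first proposition to pin down the closure exactly. A final remark: once we know each $X_{h',l'}$ with $(h',l')\le(h,l)$ in our order meets $\overline{X_{h,l}}$, irreducibility/connectedness of the strata (proved alongside the dimension count elsewhere in the paper) lets us upgrade ``meets'' to ``is contained in'', giving the stated equality $\overline{X_{h,l}}=\coprod_{h'\le h\le l\le l'}X_{h',l'}$.
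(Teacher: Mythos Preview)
Your overall plan---for every geometric point $x\in X_{h',l'}(k)$, build an explicit $k[[t]]$-lift of the Hodge filtration inside $\widetilde{\mathcal E}=\mathcal E\otimes_k k[[t]]$ and invoke the deformation proposition (Grothendieck--Messing step by step) to get a $k[[t]]$-point of $X$ whose generic fibre lies in $X_{h,l}$---is exactly the paper's approach, and your list of constraints (isotropy of $\widetilde\omega$, $\widetilde\omega_2=\widetilde\omega_1^{\perp'}$, $\pi\widetilde\omega\subset\widetilde\omega_1\cap\widetilde\omega_2$) is the right checklist. The paper does this first at a point of $X_{0,a}$ and then in general, packaging the free parameters as a symmetric matrix $Z$ (whose rank controls the jump in $h$) and a matrix $Y_2$ (whose kernel controls the drop in $l$), subject to $Y_2Z=0$ and $Z={}^tZ$; that compatibility is precisely the ``delicate bookkeeping'' you anticipate.

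Two concrete issues, though.

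\textbf{Directions.} You have them reversed. You start at $(h',l')$ with $h'\le h$ and $l'\ge l$ and want the \emph{generic} fibre in $X_{h,l}$: so you must \emph{increase} $\dim\pi\widetilde\omega$ and \emph{decrease} $\dim(\widetilde\omega_1\cap\widetilde\omega_2)$ as $t$ becomes invertible. Your mechanism for $h$ (lifting a vector of $\omega[\pi]$ so that it acquires a nonzero image under $\pi$) does exactly this---it \emph{raises} $h$, not lowers it as you write. Your mechanism for $l$ (``rotate a vector of $\omega_1\setminus\omega_2$ towards $\omega_2$'') neither raises nor lowers $l$: to lower $l$ you must deform $\widetilde\omega_1$ so that some vectors of $\omega_1\cap\omega_2=\omega_1\cap\omega_1^{\perp'}$ cease to be orthogonal to $\widetilde\omega_1$, which is what the matrix $Y_2$ in the paper accomplishes.

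\textbf{The irreducibility crutch is unavailable.} Your final remark---show only that $X_{h',l'}$ \emph{meets} $\overline{X_{h,l}}$ and then use irreducibility/connectedness of $X_{h',l'}$ to upgrade to containment---is a genuine gap. The paper never proves the strata are irreducible or connected (they are smooth and equidimensional, which is not the same), and indeed for general level structure the Shimura variety itself has many geometric components, so no such global statement is available. The paper's argument, and your own main plan read literally, avoids this entirely by constructing the generization at \emph{every} $x\in X_{h',l'}(k)$. Your chaining-via-covering-moves idea is fine too, but each covering step must likewise be done pointwise; drop the irreducibility remark.
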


\begin{proof}
The previous proposition gives the expected inclusion, we will now show the converse. Let us first prove that $X_{0,a}$ is in the closure of $X_{h,l}$ for every $0 \leq h \leq l \leq a$. Let $k$ be an algebraically closed field of characteristic $p$, and let $x \in X_{0,a} (k)$. We will prove that for any $h \leq l$, one can find a generization of $x$ which lies in $X_{h,l}$. \\
Since $(h(x), l(x)) =(0,a)$, one has the inclusions $\omega_1 \subseteq \omega_2 \subseteq \omega = \mathcal{E} [\pi]$. The matrix of the modified pairing on this set, with some appropriate basis, is
\begin{equation}\label{eq:pairing}\left ( \begin{array} {ccc}
0 & 0 & I_a \\
0 & I_{b-a} & 0 \\
I_a & 0 & 0
\end{array} \right )\end{equation}
Let $\widetilde{\mathcal{E}}$ be the crystal evaluated at $k[[t]]$. We will investigate lifts of the modules $\omega_1 \subseteq \omega$ to $\widetilde{\mathcal{E}}$. First, a lift $\widetilde{\omega_1}$ of $\omega_1$ inside $\widetilde{\mathcal{E}} [\pi]$ is given by a matrix $\left( \begin{array}{c} 
I_a \\
X \\
Y 
\end{array} \right)$ where $X,Y$ are matrices with coefficients in $k[[t]]$, whose reductions are $0$ modulo $t$. One computes that the orthogonal of $\widetilde{\omega_1}$ inside $\widetilde{\mathcal{E}} [\pi]$ is given by $\left( \begin{array}{cc} 
I_a & 0 \\
0 & I_{b-a} \\
- ^t Y & -^t X 
\end{array} \right)$. A lift $\widetilde{\omega}$ of $\omega$ will thus contain the vectors $\left( \begin{array}{c} 
0 \\
I_{b-a} \\
- ^t X
\end{array} \right)$, and be contained in $\pi^{-1} \widetilde{\omega_1}$. Let us call $\pi \widetilde{e_1},\dots,\pi \widetilde{e_a}$ the vectors appearing in the matrix for $\widetilde{\omega_1}$, and choose $\widetilde{e_1},\dots,\widetilde{e_a}$ some preimage by $\pi$ in $\widetilde{\mathcal E}$. Up to change $\widetilde{e_i}$ by some $\pi$-torsion element we can assume that they are two by two orthogonal for $<,>$.
Considering the vectors $\pi e_{b+1}, \dots, \pi e_{a+b}, \widetilde{e_1}, \dots, \widetilde{e_a}$, the remaining vectors for $\widetilde \omega$ are given by a matrix $\left( \begin{array}{c} 
I_a \\
Z
\end{array} \right)$ in the previous family, where $Z$ is a matrix with coefficients in $k[[t]]$ whose reduction is $0$. The condition that $\widetilde{\omega}$ is totally isotropic is equivalent to the equations 
\[ Z = ^t Z \qquad (Y + ^t Y + ^t X X ) Z =0.\]
Indeed, we have that the image by $\pi$ of the element corresponding to $\left( \begin{array}{c} 
I_a \\
Z
\end{array} \right)$ are given by $Z$ in the basis $\pi \widetilde e_1,\dots \pi \widetilde e_a$ i.e. in the original basis $\pi e_1,\dots, \pi e_n$ by 
$\left( \begin{array}{c} 
Z \\
X{^t}Z\\
Y{^t}Z
\end{array} \right)$. The computation of $<w_1,w_2>$ for $w_1,w_2 \in Z(\widetilde e_1,\dots  \widetilde e_a) + \pi( e_{b+1},\dots,e_{h})$ (written symbolically) is given by
\begin{eqnarray*} <w_1,w_2> = <Z \sum \lambda_i \tilde e_i + \sum \lambda_i \pi e_{b+i}, Z \sum \mu_i \tilde e_i + \sum \mu_i \pi e_{b+i})> \\ = <Z \sum \lambda_i \tilde e_i, Z \sum \mu_i \tilde e_i > + <Z \sum \lambda_i \tilde e_i,  \sum \mu_i \pi e_{b+i}> + <\sum \lambda_i \pi e_{b+i}, Z \sum \mu_i \tilde e_i >,\end{eqnarray*}
as $\mathcal E[\pi]$ is its own orthogonal, and the first term is zero as the $\widetilde e_i$ are two by two isotropic. Thus we are left with
\[ <w_1,w_2> = -\{ Z \sum \lambda_i \pi \tilde e_i,  \sum \mu_i \pi e_{b+i}\} + \{\sum \lambda_i \pi e_{b+i}, Z \sum \mu_i \pi \tilde e_i \},\]
which is given in matrix terms, varying $w_1,w_2$ and using the shape of the divided pairing by $-{^t}Z + Z = 0$. The second equation is similar using orthogonality between $\widetilde w_1$ and the vector corresponding to $\left( \begin{array}{c} 
Z \\
X{}Z\\
Y{}Z
\end{array} \right)$. The last equation is actually automatic.
From Grothendieck-Messing, applied to a devissage to square zero ideals corresponding to $k[T]/(T^n) \fleche k[T]/(T^{n-1})$, and Serre-Tate, this deformation of the Hodge filtration gives a generization $\widetilde{x}$ of $x$. We will use this argument repetitively. One then sees that $h(\widetilde{x})$ is equal to the rank of $Z$, and $l(\widetilde{x})$ is equal to the nullity of the matrix $Y + ^t Y + ^t X X $. For any couple $(l,h)$ with $0 \leq h \leq l \leq a$, one can find matrices $X,Y,Z$ such that the rank of $Z$ is $h$, the rank of $Y + ^t Y + ^t X X$ is $a-l$ and the above equations are satisfied, hence the result. \\
The general case can be deduced by a similar discussion : choose $h < \ell$ and $e_1,\dots,e_{a+b}$ a basis of $\mathcal E[\pi]$ such that $e_1,\dots,e_h$ is a basis of $\pi \omega$, $e_1,\dots,e_\ell$ a basis of $\omega^1\cap \omega^2$, $e_1,\dots,e_a$ is a basis of $\omega^1$, $e_1,\dots,e_\ell,e_{a+1},\dots,e_{b+a-\ell}$ a basis of $\omega^2$, and $e_1,\dots,e_{a+b-h}$ is a basis of $\omega[\pi]$. We moreover assume that the divided pairing is given by the matrix (in this basis)
\[
\left(
\begin{array}{cccccc}
  &   &   &   &  & I_h \\
  &   &   &   & I_{\ell-h} &  \\
  &   &  I_{a-\ell}  & 0  & &  \\
  &   & 0    & I_{b-\ell}  & & \\
    & I_{\ell-h}  &    &   & &  \\
I_h  &   &     &   & &
\end{array}
\right)
\]
We then set in $\widetilde{\mathcal E}[\pi]$ a lift $\widetilde \omega_0$ of $\pi \omega$, $\widetilde \omega_{int} \supset \widetilde\omega_0$ of $\omega^1\cap\omega^2$, and $\widetilde{\omega_1} \subset \widetilde \omega_{int}$ of $\omega_1$ by the column of the matrix
\[
\left(
\begin{array}{ccc}
 I_h&   &   \\
  & I_{\ell-h}  &    \\
  & Y_2  &  I_{a-\ell}    \\
  &   &     \\
    &0  &      \\
  &   &     
\end{array}
\right)
\]
for some matrix $Y_2 \in M_{a-\ell,\ell-h}(tk[[t]])$. Denote $\tilde e_1,\dots,\tilde e_h = e_1,\dots,e_h$ the first vectors in the previous matrix, which gives a basis for $\widetilde \omega_0$ and denote $\tilde e_{h+1},\dots,\tilde e_{\ell}$ the next $\ell-h$ ones. We then check that the orthogonal of $\widetilde \omega_1$ for the divided pairing in $\widetilde E[\pi]$ is given by \[
\left(
\begin{array}{ccc}
 I_h&   &   \\
  & I_{\ell-h}  &    \\
  &   & 0   \\
  &   &    I_{b-\ell} \\
    &0  &      \\
  &   &     
\end{array}
\right)
\]
and this defines a lift $\widetilde \omega_2$ of $\omega_2$ and, after inverting $t$, the intersection of $\widetilde \omega_2 \cap \widetilde \omega_1$  
contains $\widetilde \omega_0$ and a space related to the kernel of the matrix $Y_2$. Namely, 
$\dim \widetilde \omega_2 \cap \widetilde \omega_1[1/t] = h + \dim \ker Y_2$. Finally we lift $\omega$ by adding to $\widetilde \omega_1$ the vectors
\[\left(
\begin{array}{c}
   0 \\
  0    \\
 0   \\
   I_{b-\ell} \\
    0  \\
     0
\end{array}
\right)
\]
of $\widetilde E[\pi]$ together with the following vectors : choose $\pi^{-1}\tilde e_1,\dots,\pi^{-1}\tilde e_d$ a family of vectors of $\omega$ which generates $\omega[\pi]$ after multiplying by $\pi$, and choose lifts in $\widetilde E$ which moreover maps to $\tilde e_1,\dots,\tilde e_h = e_1,\dots,e_h$ (the previous basis for $\widetilde \omega_0$) and choose $\pi^{-1}\tilde e_{h+1},\dots,\pi^{-1}\tilde e_{\ell}$ in the preimage by $\pi$ of $\tilde e_{h+1},\dots,\tilde e_\ell$ such that $\pi^{-1}\tilde e_1,\dots,\pi^{-1}\tilde e_\ell$ are two by two orthogonal for the original pairing $< , >$. Then we add $\pi^{-1}\tilde e_1,\dots,\pi^{-1}\tilde e_h$ and the vectors given in the basis $e_{b+a-\ell + 1},\dots,e_{b+a-d},\pi^{-1}\tilde e_{d+1},\dots,\pi^{-1}\tilde e_\ell$ by the matrix $\left(\begin{array}{c} I_{\ell-d} \\ Z \end{array}\right)$ i.e. in the original basis $\pi^{-1}e_1,\dots,\pi^{-1}(e_{a+b})$ by

\[
\left(
\begin{array}{cc}
I_h& 0   \\
0& Z     \\
 0&  Y_2Z     \\
0 &0   \\
0&    \pi I_{\ell-h}        \\
0&     0  
\end{array}
\right)
\]
Because of the assumption on $\pi^{-1}\tilde e_1,\dots,\pi^{-1}\tilde e_\ell$ these last $\ell$ vectors are two by two isotropic for the original pairing iff (this reduces to a calculation with the divided pairing $\{,\}$) $^{t}Z - Z = 0$. Moreover $\widetilde \omega$ is totally isotropic (for $<,>$) if moreover $Y_2 Z = 0$. But clearly the rank, after inverting $t$, of $\pi \widetilde \omega$ is $h + \rk Z$, and thus if we have $h \leq h + r \leq \ell' = h+s \leq \ell$ for some $s,r \geq 0$, with $\ell - h \geq s \geq r$ then we can choose a symmetric matrix $Z$ of rank $r$ and a matrix $Y_2$  with kernel of dimension $s \geq r$ such that $Y_2 Z = 0$. Concluding as in the case of $X_{0,a}$, we have the result.
 \end{proof}

\begin{prop}
For all $h \leq \ell$, the stratum $X_{h,l}$ is nonempty {and smooth}, and is equidimensional of dimension $ab - \frac{(l-h)(l-h+1)}{2}$.
\end{prop}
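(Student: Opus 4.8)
The plan is to reduce to an explicit linear-algebra problem and compute with the local deformation functor. Fix a point $x\in X_{h,\ell}(k)$ with $k=\overline{\FF_p}$. By Serre--Tate and Grothendieck--Messing, together with the effectivity statement of the previous subsection (any lift of the Hodge filtration over $k[[t]]$ is realised by a $p$-divisible group with the extra structure), the completed local ring $\widehat{\mathcal O}_{X,x}$ pro-represents the functor sending an Artinian local $k$-algebra $R$ to the set of pairs $(\omega_R,\omega_{1,R})$ lifting $(\omega,\omega_1)$, where $\omega_{1,R}\subseteq\mathcal E_R[\pi]$ is a rank $a$ direct summand, $\omega_R\subseteq\mathcal E_R$ is a rank $m$ isotropic direct summand, and $\omega_{1,R}\subseteq\omega_R$, $\pi\omega_R\subseteq\omega_{1,R}$ (these inclusions being, in characteristic $p$, the whole content of the conditions on the $O_F$-action in Definition \ref{defin:PRmodel}, since there $\pi^2=0$ on $\mathcal E_R$). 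Hence $\widehat{\mathcal O}_{X_{h,\ell},x}$ pro-represents the closed subfunctor $D_{h,\ell}$ cut out by the further requirements that $\pi\omega_R$ be locally free of rank $h$ and that $\omega_{1,R}\cap\omega_{1,R}^{\perp'}$ be locally free of rank $\ell$; it suffices to show $D_{h,\ell}$ is formally smooth of the stated dimension. Nonemptiness of $X_{h,\ell}$ then follows either from the construction in the proof of Proposition \ref{propclosurestrata} (every point of the nonempty closed stratum $X_{0,a}$ admits a generization in each $X_{h,\ell}$) or directly from the description of $D_{h,\ell}$ below.

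I would then realise $D_{h,\ell}$ (equivalently, the corresponding stratum of the naive local model of pairs $(\omega_1,\omega)$ attached to a fixed $\mathcal E_0$) as a tower of three smooth morphisms. \emph{Stage 1 ($\omega_1$).} Over $D_{h,\ell}$ the sheaf $\omega_2:=\omega_1^{\perp'}$ is locally free of rank $b$, and $\mathrm{rad}:=\omega_1\cap\omega_2$ is the radical of the symmetric form $\{\,,\,\}|_{\omega_1}$, which therefore has constant rank $a-\ell$; forgetting $\omega_R$ one lands in the locus $G_\ell\subseteq\mathrm{Gr}(a,\mathcal E[\pi])$ where $\{\,,\,\}|_{\omega_1}$ has rank exactly $a-\ell$. \emph{Stage 2 ($\pi\omega$).} A rank count over a field, propagated to Artinian $R$ by Nakayama, gives $\pi\omega_R=\omega_R[\pi]^{\perp'}$, $\omega_R[\pi]=(\pi\omega_R)^{\perp'}$ and $\pi\omega_R\subseteq\mathrm{rad}$; as $\mathrm{rad}$ is isotropic for $\{\,,\,\}$ (because $\{\mathrm{rad},\omega_1\}=0$), the datum of $\pi\omega_R$ is exactly a rank $h$ subbundle of the rank $\ell$ bundle $\mathrm{rad}$, i.e. a point of a $\mathrm{Gr}(h,\ell)$-bundle over $G_\ell$. \emph{Stage 3 ($\omega$).} Given $\omega_1$ and $\pi\omega$ one is forced to have $\omega_R[\pi]=(\pi\omega_R)^{\perp'}\supseteq\omega_1+\omega_2$, and $\omega_R$ is the span of $\omega_R[\pi]$ together with a lift to $\mathcal E_R$ of a basis of $\pi\omega_R$ modulo $\omega_R[\pi]$; such lifts form a torsor under $\Hom(\pi\omega_R,\mathcal E[\pi]/\omega_R[\pi])$, and the only surviving constraint --- isotropy of $\omega_R$ for the alternating pairing $\langle\,,\,\rangle$ --- reduces (all other isotropy relations being automatic) to the $\binom h2$ independent equations $\langle y_i,y_j\rangle=0$, $i<j$, so Stage 3 is a smooth affine bundle.

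For the dimensions: Stage 3 contributes $h^2-\binom h2=\binom{h+1}2$ and Stage 2 contributes $h(\ell-h)$. Stage 1 is the main point: $G_\ell$ fibres over the orthogonal Grassmannian $\mathrm{OG}(\ell,\mathcal E[\pi])$ of isotropic $\ell$-planes via $\omega_1\mapsto\mathrm{rad}$, the fibre over $W$ being the open locus of nondegenerate rank $(a-\ell)$ subbundles of the nondegenerate rank $(m-2\ell)$ bundle $W^{\perp'}/W$; since $p\neq2$ and, by the explicit shape of $\{\,,\,\}$ in the proof of Proposition \ref{propclosurestrata}, this form has Witt index at least $\ell$ (note $\ell\le a\le m/2$), the space $\mathrm{OG}(\ell,\mathcal E[\pi])$ is smooth and equidimensional of dimension $\ell(m-\ell)-\binom{\ell+1}2$, whence $G_\ell$ is smooth and equidimensional of dimension $\bigl[\ell(m-\ell)-\binom{\ell+1}2\bigr]+(a-\ell)(b-\ell)=ab-\binom{\ell+1}2$. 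Summing the three contributions, $D_{h,\ell}$ is formally smooth of dimension $ab-\binom{\ell+1}2+h(\ell-h)+\binom{h+1}2=ab-\frac{(\ell-h)(\ell-h+1)}2$, which is the claim; smoothness and equidimensionality of $X_{h,\ell}$ follow because over the perfect field $k$ regularity of all completed local rings at closed points forces the scheme to be smooth and to be a disjoint union of integral components, each then of this dimension.

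The delicate points will be Stage 1 --- the smoothness and dimension of the symmetric-determinantal locus $G_\ell\subseteq\mathrm{Gr}(a,\mathcal E[\pi])$ via the orthogonal-Grassmannian fibration --- and the careful propagation over Artinian bases of the field-level identities used in Stage 2 (which is cleanest to phrase by exhibiting the tower as a smooth scheme representing $M_{h,\ell}$). The hypothesis $p\neq2$ is essential throughout: in characteristic $2$ the modified pairing $\{\,,\,\}$ is simultaneously symmetric and alternating, so the structure theory of its restrictions and of orthogonal Grassmannians collapses, which is precisely why the case $p=2$ requires the separate analysis of Section \ref{sect:6}.
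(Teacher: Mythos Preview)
Your argument is correct and reaches the same conclusion, but the decomposition you use differs from the paper's. The paper builds the flag from the bottom up: it first deforms $\pi\omega$ as a point of the orthogonal Grassmannian $\Gr^O(h,\mathcal E[\pi])$, then $\omega_1\cap\omega_2$ inside $(\pi\omega)^{\perp'}/\pi\omega$, then $\omega_1$ inside $(\omega_1\cap\omega_2)^{\perp'}/(\omega_1\cap\omega_2)$ (with the open nondegeneracy condition), and finally $\omega$ as a Lagrangian in the symplectic space $\pi^{-1}(\pi\omega)/(\pi\omega)^{\perp'}$. This yields a four-step tower of smooth morphisms whose relative dimensions sum to $ab-\tfrac{(\ell-h)(\ell-h+1)}{2}$. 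You instead reorganise the first three of these steps: you choose $\omega_1$ directly, stratified by the rank of its radical, and realise the resulting locus $G_\ell$ as a (smooth, open-in-Grassmannian)-bundle over $\mathrm{OG}(\ell,\mathcal E[\pi])$ via $\omega_1\mapsto\mathrm{rad}(\omega_1)$; only afterwards do you pick $\pi\omega$ inside the radical. Your Stage~3 and the paper's fourth step are the same computation (the big cell of a Lagrangian Grassmannian). Both towers are valid and the dimension counts agree; your fibration $G_\ell\to\mathrm{OG}(\ell)$ is a clean way to package the symmetric-determinantal condition, while the paper's bottom-up ordering makes the isotropic-flag structure more explicit and parallels the argument used later for the general CM field.

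One small point: your treatment of nonemptiness is incomplete. Appealing to Proposition~\ref{propclosurestrata} presupposes that $X_{0,a}$ has a $k$-point, and your alternative (``directly from the description of $D_{h,\ell}$'') only describes the local structure \emph{at} a point, so neither argument produces a point ex nihilo. The paper supplies this by an explicit CM construction: take $A=E^a\times(E^c)^b$ for $E/\overline{\FF_p}$ an elliptic curve with complex multiplication by $\mathcal O_F$, and choose $\omega_1\subseteq\omega_A$ totally isotropic for the modified pairing. You should include this (or an equivalent global input) to close the argument.
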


\begin{proof}
To prove that all the strata are non empty, by the proposition above, it is enough to prove that $X_{0,a}$ is non empty. Let $k = \overline{\mathbb{F}_p}$, and let $E$ be an elliptic curve over $k$ with complex multiplication by $O_F$ such that the action of $O_F$ on $\omega_E$ is given by $\sigma_1$, and let $E^c$ be the same elliptic curve, but with the action of $O_F$ twisted by the complex conjugation. Define $A = E^a \times (E^c)^b$, and let us choose a space $\omega_1 \subseteq \omega_A$ which is totally isotropic for the modified pairing. This gives a point in $X_{0,a}$.
Let us now compute the dimension of the stratum $X_{h,l}$. On this stratum, on has the sheaves
$$\pi \omega \subseteq \omega_1 \cap \omega_2 \subseteq \omega_1$$
which are locally free of rank $h,l,a$ respectively. Deforming a point of $X_{h,l}$ inside $X_{h,l}$ thus consists in the following operations:
\begin{itemize}
\item deforming the sheaf $\pi \omega$ in a sheaf $\widetilde{\omega_0}$, which should be totally isotropic for the modified pairing.
\item deforming the sheaf $\omega_1 \cap \omega_2$ inside the orthogonal of the previous one, which should also be totally isotropic (for the modified pairing).
\item deforming the sheaf $\omega_1$ inside the orthogonal of the previous one (for the modified pairing), {asking moreover that $\omega_1/(\omega_1\cap\omega_2) \cap \left(\omega_1/(\omega_1\cap\omega_2)\right)^{\perp''} = \{0\}$, where $\perp''$ is the modified pairing descended to $(\omega_1\cap\omega_2)^{\perp'}/(\omega_1\cap\omega_2)$.}
\item deforming $\omega$, which should contain the orthogonal of $\widetilde{\omega_0}$ (for the modified pairing)and be contained in $\pi^{-1} \widetilde{\omega_0}$, and be totally isotropic (for the original pairing). Note that\footnote{$\pi \mathcal F^\perp = \mathcal F^{\perp'}$ if $\mathcal F \subset \mathcal E[\pi]$ and $\mathcal G^\perp = (\pi \mathcal G)^{\perp'}$ if $\mathcal E[\pi] \subset \mathcal G$} if we denote $\mathcal F = (\widetilde \omega_0)^{\perp'}$ we have $\mathcal F \subset \mathcal E[\pi]$ thus $\pi \mathcal F^\perp = \mathcal F^{\perp'} = \widetilde \omega_0$. But both $\mathcal F^\perp$ and $\pi^{-1} \widetilde w_0$ contains $\mathcal E[\pi]$, and are equal after multiplying by $\pi$, thus $\mathcal F^\perp = \pi^{-1}\widetilde \omega_0$. It is thus enough to deform the image of $\omega$ in $\mathcal F^\perp/\mathcal F$.
\end{itemize}
{In other words we look at the following sequence of schemes}
\[ \Gr^{Sp}(h,(\mathcal F^{\perp}/\mathcal F,<,>)) \fleche  U \fleche \Gr^O(l - h,((\omega_0)^{univ,\perp'}/\omega_0^{univ},\{,\})) \fleche \Gr^O(h,(\mathcal E[\pi],\{,\})),\]
where $\Gr^O(k,(V,\{,\}))$ is the Grassmanian of totally isotropic subspace of rank k in a space $V$ with symmetric pairing, and $\Gr^{Sp}(k,(V,<,>)))$ is the analogous one for an alternated pairing, $(\omega_0)^{univ}$ is the universal object of $ \Gr^O(h,((\mathcal E[\pi],\{,\}))$, and $U \subset \Gr^{O}(a-l,(\omega_1\cap\omega_2)^{univ,\perp'}/(\omega_1\cap\omega_2^{univ}),\{,\})$ is the open where the universal isotropic subspace $F$, which corresponds to $\omega_1/(\omega_1\cap\omega_2)$, satisfies $F^{\perp'} \cap F = \{0\}$, with $\omega_1\cap\omega_2^{univ}$ the (pullback of the) universal object of  $\Gr^O(l - h,((\omega_0)^{univ},\{,\}))$. 

All those Grassmanians are relatively smooth, and
the first point gives a dimension $h(a+b-h) - \frac{h(h+1)}{2}$, the second one a relative dimension $(l-h)(a+b-l-h) - \frac{(l-h)(l-h+1)}{2}$, the third one $(a-l)(b-l)$ and the last one $\frac{h(h+1)}{2}$. The total dimension is then
$$ab - \frac{(l-h)(l-h+1)}{2}.$$
{Moreover if $x \in X_{h,l}(S)$ is a $R$-point for some ring $R$, and $S\fleche R$ is a square-zero thickening of $\mathbb F_p$-schemes, then to lift $x$ it is enough to lift its Hodge Filtration (by Grothendieck Messing) satisfying all the desired properties. But this is indeed possible as all the previous Grassmanians are formally smooth and the dimension is indeed the given one.}

\end{proof}

\begin{cor}
The irreducible components of the scheme $X$ are determined by the ones of $X_{h,h}$ for $0\leq h \leq a$.
\end{cor}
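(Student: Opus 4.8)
The plan is to combine the two preceding propositions with the elementary fact that the irreducible components of a scheme are the closures of the irreducible components of any dense open subscheme. First I would record the density of the union of the open strata: each $X_{h,h}$ is open in $X$, and specializing the closure relation $\overline{X_{h,l}}=\coprod_{0\le h'\le h\le l\le l'\le a}X_{h',l'}$ to $l=h$ gives
\[
\overline{X_{h,h}}=\coprod_{0\le h'\le h\le l'\le a}X_{h',l'},
\]
so that every stratum $X_{h,l}$ with $h\le l$ is contained in $\overline{X_{h,h}}$ (take $h'=h$ and $l'=l$, which satisfy $h'\le h\le l'$). Since $X=\coprod_{0\le h\le l\le a}X_{h,l}$, summing over $(h,l)$ yields $X\subseteq\bigcup_{0\le h\le a}\overline{X_{h,h}}$, hence the open subscheme $U:=\bigcup_{0\le h\le a}X_{h,h}$ is dense in $X$; together with the dimension formula this also shows that $X$ is equidimensional of dimension $ab$.

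Next I would identify $U$, as a scheme, with the disjoint union $\coprod_{0\le h\le a}X_{h,h}$: the $X_{h,h}$ are pairwise disjoint, each is open in $X$ hence open in $U$, and each is also closed in $U$, since $\overline{X_{h,h}}\cap U=X_{h,h}$ (a stratum $X_{h',h'}$ lies in $\overline{X_{h,h}}$ only when $h'=h$). Therefore the irreducible components of $U$ are exactly the irreducible components of the individual schemes $X_{h,h}$, $0\le h\le a$. Applying the general fact that $Z\mapsto\overline{Z}$ is a bijection from the irreducible components of a dense open subscheme $U\subseteq X$ onto those of $X$, with inverse $W\mapsto W\cap U$, then gives the corollary: the irreducible components of $X$ are precisely the closures in $X$ of the irreducible components of the $X_{h,h}$, and in particular they all have dimension $ab$.

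There is no genuine obstacle in this argument; it is a formal consequence of the closure relations and of the openness and smoothness of the top strata. The only point that requires care — and the reason the statement is phrased as ``determined by'' rather than asserting that the components of $X$ are the $a+1$ closures $\overline{X_{h,h}}$ — is that an individual $X_{h,h}$ need not be irreducible: for instance when $a=b$ the condition cutting out $X_{a,a}$ forces $\omega_1=\omega_2$ to be a maximal isotropic subspace of $\mathcal E[\pi]$ for the modified (symmetric) pairing, and the relevant Grassmannian of maximal isotropic subspaces is in general disconnected.
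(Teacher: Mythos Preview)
Your proposal is correct and is exactly the argument the paper leaves implicit: the corollary is stated without proof, as an immediate consequence of the closure relations and the openness of each $X_{h,h}$, and your spelling out that the $X_{h,h}$ are pairwise open and closed in their dense open union $U$ is precisely the intended reasoning. Your closing remark on why the statement says ``determined by'' rather than asserting that the components are the $\overline{X_{h,h}}$ themselves is apt and well observed.
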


\subsection{Local rings}

Assume $p \neq 2$.

\begin{prop}
The smooth locus of $X$ is the union of the strata $X_{h,h}$ for $0 \leq h \leq a$.
\end{prop}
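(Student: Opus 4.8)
The plan is to establish the two inclusions. One of them is immediate: each $X_{h,h}$ is open in $X$ (as observed earlier) and is smooth over the residue field $\mathbb{F}_p$, which being perfect makes smoothness coincide with regularity; hence every point of $X_{h,h}$ is a smooth point of $X$, and $\bigcup_{h=0}^{a} X_{h,h}\subseteq X^{\mathrm{sm}}$. The content is the reverse inclusion, and I would prove it by showing that no point of a stratum $X_{h,l}$ with $h<l$ is smooth, using only the closure relations of \autoref{propclosurestrata} together with the openness and nonemptiness of the $X_{h,h}$.

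Concretely, suppose some point $x\in X_{h,l}$ with $h<l$ were smooth, so that $\mathcal{O}_{X,x}$ is regular, hence a domain; then $x$ lies on a unique irreducible component $Z$ of $X$. By \autoref{propclosurestrata} a stratum $X_{h,l}$ is contained in $\overline{X_{\alpha,\beta}}$ precisely when $\alpha\ge h$ and $\beta\le l$; applying this with $(\alpha,\beta)=(h,h)$ and with $(\alpha,\beta)=(l,l)$ (both admissible since $h<l$) yields $x\in\overline{X_{h,h}}\cap\overline{X_{l,l}}$. Since $X_{h,h}$ and $X_{l,l}$ are open in $X$, the closure of each is the union of the irreducible components of $X$ meeting it, so the unique component $Z$ through $x$ meets both $X_{h,h}$ and $X_{l,l}$. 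Then $Z\cap X_{h,h}$ and $Z\cap X_{l,l}$ are nonempty open subsets of the irreducible scheme $Z$, hence both dense, hence have nonempty intersection --- contradicting $X_{h,h}\cap X_{l,l}=\emptyset$, which holds because $h\neq l$ (both strata being nonempty by the proposition on the geometry of the strata). With the stratification $X=\coprod_{0\le h\le l\le a}X_{h,l}$ this gives $X^{\mathrm{sm}}=\bigcup_{h=0}^{a}X_{h,h}$.

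I do not expect a genuine obstacle: all the substantive input --- smoothness and nonemptiness of the $X_{h,h}$, and the closure relations --- is already available, and the only point needing a little care is the standard fact that for a Noetherian scheme the closure of an open subset is the union of the irreducible components it meets (together with the elementary remark that two dense open subsets of an irreducible scheme meet). A heavier alternative, not needed here, would be to compute the completed local ring $\widehat{\mathcal{O}}_{X,x}$ at a point of $X_{h,l}$ directly from the deformation-theoretic description of $X$ (Grothendieck--Messing and Serre--Tate, exactly as in the proof of the dimension formula) and to identify it as the product of a power series ring with an explicit singularity of symmetric-determinantal type and codimension $\tfrac{(l-h)(l-h+1)}{2}$; that route reproves non-smoothness and should also be how one sees that $X$ is Cohen--Macaulay.
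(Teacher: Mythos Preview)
Your argument is correct and takes a genuinely different route from the paper. The paper proves non-smoothness at a point $x\in X_{h,l}$ with $h<l$ by an explicit obstruction computation: it writes down a first-order deformation of the Hodge filtration over $k[\varepsilon]/\varepsilon^2$ and checks directly, via the isotropy condition for $\widetilde\omega$, that it cannot be lifted to $k[\varepsilon]/\varepsilon^3$. You instead use a soft topological argument: the closure relations force $x$ to lie in $\overline{X_{h,h}}\cap\overline{X_{l,l}}$, and since both of these are unions of irreducible components (being closures of opens), regularity at $x$ would put a single component through both open strata, contradicting their disjointness. Your approach is cleaner and avoids any computation, extracting the result purely from \autoref{propclosurestrata} and the openness of the maximal strata; the paper's approach is heavier but more informative, since the explicit obstruction it exhibits is precisely the germ of the determinantal singularity that later (in the proof of flatness and Cohen--Macaulayness) gets identified with the ring $k[Z,T]/(Z={^t}Z,\,T={^t}T,\,ZT)$. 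Your closing remark about the ``heavier alternative'' is exactly what the paper does next, so both viewpoints are present in the paper --- you have simply noticed that the topological shortcut already suffices for this particular proposition.
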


\begin{proof}
We have seen that the strata $X_{h,h}$ are open and smooth in the previous proposition, thus their union is included in the smooth locus.
Assume now that $h < l$, and let $x \in X_{h,l} (k)$. We will prove that $X$ is not smooth at $x$. The pairing on $\mathcal{E} [\pi]$ is given by a matrix
$$\left ( \begin{array} {ccc}
0 & 0 & I_l \\
0 & I_{a+b-2l} & 0 \\
I_l & 0 & 0
\end{array} \right )$$
written with respect to a basis $\pi e_1, \dots, \pi e_{a+b}$, where $\omega_1$ is spanned by $\pi e_1, \dots, \pi e_a$, $\omega_1 \cap \omega_2$ is spanned by $\pi e_1, \dots, \pi e_l$ and $\pi \omega$ is spanned by $\pi e_{l-h+1}, \dots, \pi e_l$. One can also assume that $\omega/\omega_1$ is spanned by $\pi e_{a+1}, \dots, \pi e_{a+b-h} , e_{l-h+1}, \dots, e_l$. Let $\mathcal{E}'$ be the evaluation of the crystal at $k[\varepsilon] / \varepsilon^2$. Let us define a lift $\omega'$ of $\omega$ to $\mathcal{E}'$. One can lift the basis on $\mathcal{E}$ to a basis $e_1', \dots, e_{a+b}'$ in such a way that the matrix of the pairing is not changed. We define $\omega_1'$ to be spanned by $\pi e_1'+\varepsilon \pi e_{a+b-l+1}', \pi e_2', \dots, \pi e_a' $. We thus define $\omega'/\omega_1'$ to be spanned by $\pi e_{a+1}', \dots, \pi e_{a+b-l+1}' + \varepsilon e_1' , \dots, \pi e_{a+b-h}' , e_{l-h+1}', \dots, e_l'$. This gives a point $x' \in X(k[\varepsilon]/\varepsilon^2)$. We will now prove that this point cannot be lifted to $k[\varepsilon]/\varepsilon^3$. If it were the case, one would have a lift $\widetilde{\mathcal E}$ of $\mathcal{E}'$ to $k[\varepsilon]/\varepsilon^3$ together with a lift $\widetilde{\omega}$ of $\omega'$. In particular, there would exist an element in $\widetilde{\omega_1}$ of the form $v_1 := \pi \widetilde{e_1} + \varepsilon \pi \widetilde{e_{a+b-l+1}} + \varepsilon^2 \pi u$. There would also be in $\widetilde{\omega}$ an element of the form $v_2 := \pi \widetilde{e_{a+b-l+1}} + \varepsilon (\widetilde{e_1} + \varepsilon \widetilde{e_{a+b-l+1}}) + \varepsilon^2 v$ such that $\varepsilon^2 \pi v$ belongs to $\widetilde{\omega_1}$.
The (original) pairing between these two vectors is equal to
\[ <v_1,v_2> = \{ v_1,\pi v_2 \} = 2 \varepsilon^2 +  \{ \varepsilon^2\pi \widetilde{e_1} , \pi v \}.\] But the quantity $\{ \varepsilon^2\pi \widetilde{e_1} , \pi v \} =  0$ modulo $\varepsilon^3$, since $\varepsilon^2\pi v$ belongs to $\widetilde{\omega_1}$. This gives the desired contradiction, since $\widetilde{\omega} \ni v_1,v_2$ should be totally isotropic.    
\end{proof}

\begin{rema}
Even though the irreducible components of $X$ are determined by the $X_{h,h}$ (namely they are locally the closure of $X_{h,h}$ on the local model), and $X_{h,h}$ is smooth, it is not true in general that the irreducible components are smooth. This is true when $a = 1$ (\cite{Kramer}), but already for $a=2$ \cite{Zac22} computed that the irreducible component corresponding to $X_{1,1}$ is not smooth. Our computations prove that the irreducible component corresponding to $X_{h,h}$ is not smooth when $0 < h <a$. The variety $X$ is thus not semi-stable when $a \geq 2$.
\end{rema}
 
\begin{prop}
\label{prop:flat0}
The scheme $Y$ is flat over $\mathcal O_{F_p}$, normal and its special fiber is Cohen-Macaulay.
\end{prop}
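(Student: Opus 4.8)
Flatness over $\mathcal O_{F_p}$, normality, and Cohen--Macaulayness of the special fibre are all local properties on $Y$, so the plan is to reduce everything to an explicit local computation. By the deformation-theoretic discussion above (Grothendieck--Messing together with Serre--Tate, exactly as used in the proof of Proposition~\ref{propclosurestrata}), the complete local ring $\widehat{\mathcal O}_{Y,x}$ at a point $x\in X(k)$ is the complete local ring of a local model $M$ over $\mathcal O_{F_p}$ --- the scheme of chains $\omega_1\subseteq\omega\subseteq\mathcal E_0$ subject to the conditions of Definition~\ref{defin:PRmodel}, where $\mathcal E_0$ is the standard free $\mathcal O_F\otimes_{\ZZ}\mathcal O_{F_p}$-module of rank $m$ with its perfect pairing. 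Writing $\varpi$ for a uniformizer of $\mathcal O_{F_p}$, it therefore suffices to prove that $M$ is $\mathcal O_{F_p}$-flat with Cohen--Macaulay special fibre; normality of $Y$ will then follow from Serre's criterion $(R_1)+(S_2)$.

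First I would make $\widehat{\mathcal O}_{M,x}$ fully explicit for $x\in X_{h,l}(k)$, by repeating the matrix computations from the proof of Proposition~\ref{propclosurestrata} and from the propositions on the dimension and smoothness of the strata above, but now over $\mathcal O_{F_p}[[\underline t\,]]$ rather than over $k[[t]]$ --- i.e.\ keeping $\varpi$ as an extra variable and using the Eisenstein relation on $[\pi]$ in place of $[\pi]^2=0$. As in the special-fibre case, I expect this to identify $\widehat{\mathcal O}_{M,x}$, up to a formally smooth factor (coming from the Grassmannian-type deformations of $\pi\omega$, of $\omega_1\cap\omega_2$ inside the orthogonal of $\pi\omega$, and of $\omega_1$ inside the orthogonal of $\omega_1\cap\omega_2$), with an explicit \emph{model ring} $\mathcal N$ over $\mathcal O_{F_p}$ of determinantal type: a $\varpi$-deformation of a ``variety of symmetric complexes'' $\{(S,Z):{}^tS=S,\ {}^tZ=Z,\ SZ=0\}$. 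In the extreme case $x\in X_{0,a}$ this is exactly the pair consisting of the matrices $Y+{}^tY+{}^tXX$ and $Z$ appearing in the proof of Proposition~\ref{propclosurestrata}.

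The three assertions then follow from commutative algebra applied to $\mathcal N$. One identifies $\mathcal N$ (over $\mathcal O_{F_p}$, or already over $\mathbb Z_p$) as a scheme of the variety-of-complexes type, which is flat over the base and Cohen--Macaulay, reduced and normal by the classical theory (De Concini--Strickland for varieties of complexes, with the requisite symmetric modifications; for $a=1$ this is Kramer's smoothness result \cite{Kramer}, and an explicit minimal free resolution, or an induction on $a$, also does the job), and one checks that its special fibre $\mathcal N_0$ has dimension $ab$, matching the dimension of $X$ computed above. Since $\mathcal N$ is Cohen--Macaulay and $\mathcal O_{F_p}$-flat, so is $Y$ (locally a power-series extension of $\mathcal N$); hence $\varpi$ is a non-zero-divisor, $Y$ is flat over $\mathcal O_{F_p}$, and $X=Y/\varpi$ is Cohen--Macaulay. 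Moreover $X$ is reduced: being Cohen--Macaulay it has no embedded components ($S_1$), and it is generically smooth since its smooth locus $\bigsqcup_h X_{h,h}$ is dense, so it satisfies $R_0$; by Serre's criterion $X$ is reduced. Finally $Y$ is normal: it satisfies $(S_2)$ because it is Cohen--Macaulay, and it satisfies $(R_1)$ because its generic fibre is the smooth Shimura variety of dimension $ab$, it is regular at every point of each $X_{h,h}$ (there $X$ is smooth, hence regular, and $\varpi$ is a non-zero-divisor, so $\mathcal O_Y$ is regular there), and the remaining locus $\bigsqcup_{h<l}X_{h,l}$ --- which is closed, being the complement of $\bigsqcup_h X_{h,h}$ --- has dimension $\max_{h<l}\big(ab-\tfrac{(l-h)(l-h+1)}{2}\big)\le ab-1$, hence codimension $\ge 2$ in $Y$. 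Serre's criterion now gives normality of $Y$.

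The main obstacle is the second step: pinning down the precise model ring $\mathcal N$, and in particular controlling the $\varpi$-linear corrections in the relations ``$SZ\equiv0$'' which arise from the Eisenstein relation on $[\pi]$ and from having to impose total isotropy simultaneously for the alternating pairing $\langle\,,\rangle$ and for the modified symmetric pairing $\{\,,\}$; one must then verify that the resulting $\varpi$-flat degeneration is still Cohen--Macaulay with reduced special fibre of the expected dimension $ab$. If a clean closed form for $\mathcal N$ proves awkward, the fallback is an induction on $a$: split off a hyperbolic plane for $\{\,,\}$ so as to relate the $(a,b)$-model to the $(a-1,b-1)$-model up to a formally smooth factor, and propagate flatness, Cohen--Macaulayness and normality along the induction, the base case $a=0$ (or $a=1$, by \cite{Kramer}) being known.
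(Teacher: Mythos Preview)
Your plan is essentially the same as the paper's, and the logical skeleton (reduce to a local model, identify the local ring as a $\varpi$-deformation of a variety of symmetric complexes, invoke the known Cohen--Macaulay/reducedness results, then conclude normality via Serre's criterion) matches exactly. A few points on execution, where the paper is sharper:

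\begin{itemize}
\item Rather than computing the completed local ring at every stratum $X_{h,l}$, the paper works globally with a local model diagram $Y\leftarrow\widetilde Y\to\mathcal N$ and observes that the automorphism group $G$ of $(\Lambda,\langle\,,\rangle)$ acts transitively on each stratum $N_{h,l}$ of the special fibre of $\mathcal N$. Since the non-flat (resp.\ non-Cohen--Macaulay) locus is $G$-stable and closed, it suffices to compute the local ring at a \emph{single} point of $N_{0,a}$. This avoids your proposed stratum-by-stratum computation.
\item The ``main obstacle'' you flag---pinning down the $\varpi$-correction in the relation $SZ\equiv 0$---is resolved cleanly: the local ring at $x\in N_{0,a}$ is (a localization of)
\[
W_{\mathcal O}(k)[Z,X,Y]\big/\bigl(Z-{}^tZ,\ (Y+{}^tY+{}^tXX)Z-(\pi_1-\pi_2)I_a\bigr),
\]
the correction being exactly $(\pi_1-\pi_2)I_a$. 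The paper then makes the change of variables $T=Y+{}^tY$, $A=Y-{}^tY$ (which uses $p\neq 2$, a hypothesis you should remember to invoke), and $T\mapsto T-{}^tXX$, to land in the De Concini--Procesi ring $W_{\mathcal O}(k)[Z,T]/(Z={}^tZ,\ T={}^tT,\ TZ-(\pi_1-\pi_2)I_a)$, whose special fibre is the Cohen--Macaulay, reduced scheme of \cite{CN}, and whose flatness is as in \cite{DP}. Your reference to De Concini--Strickland (varieties of complexes) is the right family, but the precise inputs here are the \emph{symmetric} versions.
\item Your fallback induction on $a$ is unnecessary once the $X_{0,a}$ computation is done via the group action; but it would also work.
\end{itemize}
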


\begin{proof}
The special fiber of $Y$, $X$, is reduced as its irreducible components are smooth. Thus we only need to prove that generic points of irreducible components of $X$ lifts to characteristic zero. We translate the formulation this way. Let $\Lambda = \mathcal O_F^{2m}$, with natural polarisation $(x,y) = \sum_i x_i c(y_i)$. We have a local model diagram
\[ Y \longleftarrow \widetilde{Y} \fleche \mathcal N,\]
where $\widetilde{Y} = \Isom_{\mathcal O_F,<,>}(\mathcal E,\Lambda \otimes O_Y)$, and $\mathcal N$ is the $\mathcal O_{F,p}$-scheme parametrizing, for any scheme $S$, triples
\[ (\mathcal F_1,\mathcal F),\]
where $\mathcal F \subset \Lambda\otimes_{O_F} S$ is a locally direct factor, stable by $\mathcal O_F$, totally isotropic of rank $a+b$, and $\mathcal F_1 \subset \mathcal F$ is a locally direct factor of rank $a$, stable by $\mathcal O_F$, such that
\begin{itemize}
\item $([\pi]-\pi_1)(\mathcal F_1) \subset p\mathcal F$
\item $([\pi]-\pi_2)(\mathcal F) \subset \mathcal F_1.$
\end{itemize}
where the map $\widetilde{Y} \fleche Y$ is the natural one and is smooth (it is a torsor over a smooth algebraic group), and \[\widetilde Y \fleche \mathcal N, (A/S,\omega^1,i : \mathcal E \simeq \Lambda \otimes \mathcal O_S) \mapsto \omega^1\subset \omega \subset \mathcal E \simeq\Lambda\otimes \mathcal O_S.\] 
In particular $\widetilde Y \fleche \mathcal N$ is formally smooth by Grothendieck-Messing. Thus to prove the result, it is enough to show that $\mathcal N$ is flat over $\mathcal O_{F,p}$.
But what we did before actually shows that, if $N = \mathcal N \otimes_{O_F} k_F$,
\[ N = \coprod_{0 \leq h \leq \ell \leq a} N_{h,l},\]
with the expected closure relation, and each $N_{h,l}$ is smooth of some dimension. In particular, $N$ is reduced and the irreducible components of $N$ are those of the various $N_{h,h}$. In particular, $N$ has dimension $ab$ and $\mathcal N$ has dimension $ab+1 = \dim N + \dim \mathcal O_F$. 
Thus to show that $\mathcal N$ is flat over $\mathcal O_F$ it is enough to show its generic fiber is dense. 
But $N$ is endowed with an action of 
\[ G = \{ g \in \GL_{a+b,k[X]/(X^2)}(\Lambda) |  <g x, y> = <x,gy>\},\]
and $G$ acts transitively on $X_{h,\ell}$ for each $h,\ell$ : Indeed, for each such we can find a basis $e_1,\dots,e_{a+b}$ of $\Lambda$ such that $e_1,\dots,e_h,Xe_1,\dots,X e_{a+b-h}$ is a basis of $\mathcal F$ and $X e_1,\dots,X e_a$ is a basis of $\mathcal F_1$, with $(F_1)\cap(F^1)^{\perp}$ given by $X e_1,\dots,Xe_\ell$. But $G$ preserve the non-flat (resp. non-Cohen-Macaulay) locus, so as the flat (resp. Cohen-Macaulay) locus is open, it is enough to show that points of $N_{0,a}$ lifts to characteristics 0 (and are Cohen-Macaulay in $N$).
\begin{claim} Let $x \in N_{0,a}(k)$. The local ring at $x$ of $\mathcal N$ is given by (the localisation at $Z=T=Y=\pi=0$ of)
\[ W_{\mathcal{O}} (k)[Z,X,Y]/(Z-{^t}Z,(Y+{^t}Y + {^t}XX)Z-(\pi_1-\pi_2)I_a).\]
where $W_{\mathcal{O}} (k) = W(k) \otimes_{\mathbb{Z}_p} \mathcal O_{F_p}$, $Z$ is a (symmetric) $a \times a$-matrix, $Y$ is a $a \times a$-matrix and $X$ is a $(b-a) \times a$-matrix.\end{claim} 

Assume the Claim is true for the moment. The previous ring is flat over $\mathcal O_{F_p}$, with Cohen-Macaulay (and reduced) fibers by a result of \cite[Theorem 2]{CN} and \cite[Remarque 5.9]{DP}. Indeed, it is proven in \cite{CN} that, for $k$ a field, the ring
\[ k[Z,T]/(Z-{^t}Z,T-{^t}T,ZT),\]
where $Z,T$ are two (symmetric) matrices of size $a$, is Cohen-Macaulay (and reduced) and the generic fiber is smooth. But if $\Char(k) \neq 2$, then the map 
\[
\begin{array}{ccc}
   W_\mathcal O(k)[Z,X,T,A]/((T+{^t}XX)Z-(\pi_1-\pi_2)I_a) &\fleche& W_\mathcal O(k)[Z,X,Y]((Y+{^t}Y + {^t}XX)Z-(\pi_1-\pi_2)I_a)  \\
  T &\mapsto& Y + {^t}Y\\ A &\mapsto&Y - {^t}Y,
  \end{array}
\]
where $Z,T$ are symmetric size $a$ matrix and $A$ is an antisymmetric size $a$ matrix, is an isomorphism. So 
\begin{eqnarray*} \Spec(W_\mathcal O(k)[Z,X,Y](Z-^{t}Z,(Y+{^t}Y + {^t}XX)Z-(\pi_1-\pi_2)I_a)) \\ \simeq
 \Spec(W_\mathcal O(k)[Z,X,T]/(Z-{^t}Z,T-{^t}T,(T+{^t}XX)Z-(\pi_1-\pi_2)I_a)) \times \mathbb 
A_{W_\mathcal O(k)}^{\frac{a(a-1)}{2}}.\end{eqnarray*}
Now, we have the  isomorphism
\[
   W_\mathcal O(k)[Z,X,T]/(Z-{^t}Z,T-{^t}T,(T+{^t}XX)Z-(\pi_1-\pi_2)I_a) \fleche W_\mathcal O(k)[Z,X,T]/(Z-{^t}Z,T-{^t}T,TZ-(\pi_1-\pi_2)I_a),\]
   given by
 \[ T \mapsto T -{^t}XX, X \mapsto X.\]
where the last ring is the one of \cite{DP} except that $(\pi_1-\pi_2)$ appears instead of $p$ and the special fiber is the same, so their argument that irreducible components in special fiber lifts carries over (changing $p$ by $\pi_1-\pi_2$).
Thus the ring of the Claim has reduced and Cohen-Macaulay fibers with every irreducible component being in characteristics zero, giving that the ring is flat. Thus 
$\mathcal N$ is flat over $\mathcal O_{F_p}$ with Cohen-Macaulay fibers. Moreover $\mathcal N$ is smooth in generic fiber, and 
$N$ is generically regular as $X_{h,h}$ is smooth for all $h$, thus $\mathcal N$ is R1 and S2, thus normal by Serre's criterion. 
The same is true for $Y$ using the local model diagram.

It only remain to prove the claim which is slightly more than what we did in the proof of Proposition \ref{propclosurestrata} as we don't assume lifts are in special fiber anymore.

\begin{proof}[Proof of claim] As before, $\Lambda$ is endowed with a $\mathcal O_{F_p} = \ZZ_p[X]/(X-\pi_1)(X-\pi_2)$-action, $X$ acting by $\pi$, and a pairing, and $\Lambda[\pi-\pi_i]=(\pi-\pi_{3-i})\Lambda$ is totally isotropic as, e.g.
\[ <(\pi-\pi_2)x,(\pi-\pi_2)y> = <\underbrace{(\pi-\pi_1)(\pi-\pi_2)}_{=0}x,y> = 0.\]
One can assume that the matrix of the modified pairing on $(\Lambda\otimes k) [\pi]$ is 
$$Q = \left ( \begin{array} {ccc}
0 & 0 & I_a \\
0 & I_{b-a} & 0 \\
I_a & 0 & 0
\end{array} \right )$$
where $\mathcal F_1$ is spanned by $\pi e_1, \dots, \pi e_a \in \Lambda \otimes k$, and $\mathcal F = (\Lambda\otimes k) [\pi]$. Let $R$ be a local $W_{\mathcal{O}} (k)$-algebra, with a surjective morphism $R \to k$, whose kernel is denoted $I$. We investigate the possible lifts of $\mathcal F$ to $\Lambda\otimes_{\ZZ_p} R$.
First one can assume that the matrix for the induced pairing between $\Lambda[\pi-\pi_1]$ and $\Lambda[\pi-\pi_2]$ is still given by the matrix $Q$, where $\Lambda[\pi-\pi_i]$ has the basis $(\pi - \pi_{3-i}) e_1,  \dots, (\pi - \pi_{3-i})e_{a+b}$. One can also assume that the matrix of the original pairing is 
$$ \left (  \begin{array}{cc}
0 & -Q \\
Q & 0
\end{array} \right )  $$
in the basis $(\pi - \pi_2) e_1, \dots, (\pi - \pi_2)e_{a+b}, e_1,  \dots, e_{a+b}$. \\
One should lift $\mathcal F_1$ inside $ \Lambda\otimes_{\ZZ_p} R[\pi - \pi_1]$. The latter is a $R$-module of rank $a+b$ with basis $(\pi - \pi_2) e_1, \dots, (\pi - \pi_2) e_{a+b}$. A lift of $\mathcal F_1$ is then spanned by the image of the matrix
$$\left ( \begin{array} {c}
I_a  \\
X  \\
Y
\end{array} \right ),$$
with $X,Y$ with coefficients in $I$.
Let $\widetilde{f_1}, \dots, \widetilde{f_a}$ be the vectors defined by the column of this matrix, and $\widetilde{e_1}, \dots, \widetilde{e_a}$ be the vectors defined by this matrix in the basis $e_1,  \dots, e_{a+b}$ (so that $\widetilde{f_a} = (\pi - \pi_2) \widetilde{e_a}$). One can then compute the orthogonal of $\mathcal F_1$ inside $ \Lambda\otimes_{\ZZ_p} R[\pi - \pi_2]$ (for the modified pairing). It is defined by the matrix
$$\left ( \begin{array} {cc}
I_a & 0 \\
0 & I_{b-a}  \\
- ^t Y & - ^t X
\end{array} \right ),$$
 in the basis $(\pi - \pi_1) e_1, \dots, (\pi - \pi_1) e_{a+b}$. In particular, a possible lift of $\mathcal F$ will automatically contain the image of the matrix $\left ( \begin{array} {c}
0 \\
I_{b-a}  \\
- ^t X
\end{array} \right )$ in the basis $(\pi - \pi_1) e_1, \dots, (\pi - \pi_1) e_{a+b}$. Let $\widetilde{\mathcal F_1}$ be the lift of $\mathcal F_1$, and $\widetilde{\mathcal F'}$ be the free $R$-module obtained as the sum of $\widetilde{\mathcal F_1}$ and the previous module. \\
Now, we are left to lift $\mathcal F$ inside $(\pi - \pi_2)^{-1} \widetilde{\mathcal F_1} / \widetilde{\mathcal F'}$. A basis for this module consists in $(\pi - \pi_1) e_{b+1}, \dots, (\pi - \pi_1) e_{a+b}, \widetilde{e_1}, \dots, \widetilde{e_a}$, and a lift of $\mathcal F$ will be given by the image of a matrix of the form $\left ( \begin{array} {c}
I_a \\
Z
\end{array} \right )$. All is left to do is to check the isotropy condition for the lift of $\mathcal F$. The last vectors are automatically orthogonal to the vectors obtained by the matrix $\left ( \begin{array} {c}
0 \\
I_{b-a}  \\
- ^t X
\end{array} \right )$ in $\Lambda\otimes_{\ZZ_p} R[\pi - \pi_2]$. The orthogonality with $ \widetilde{\mathcal F_1}$ gives the relation $(\pi_2 - \pi_1) I_a  + (Y + ^t Y + ^t X X) Z = 0$. Finally, the fact that these vectors should be pairwise orthogonal gives the relation $^t Z = Z$, hence the result.
\end{proof}
\end{proof} 

\section{Modular forms}
\label{section:modforms}
\subsection{Sheaves}

We define $\mathcal{E}_i = \mathcal{E} [T - \pi_i]$, for $i = 1, 2$.

\begin{prop}
The sheaf $\det \mathcal{E}$ is trivial, and one has $\det(\mathcal{E}_1) \simeq \det(\mathcal{E}_2)^{-1}$.
\end{prop}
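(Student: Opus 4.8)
The plan is to exhibit $\det\mathcal E$ as the determinant of the de Rham cohomology of an abelian scheme, and to read off the twist on $\det\mathcal E_i$ from the perfect pairing already in play. First I would recall that $\mathcal E = H^1_{dR}(A)$ carries the alternating pairing $\langle\,,\,\rangle$ coming from the (principal at $p$) polarization $\lambda$; this pairing is perfect, so it identifies $\mathcal E$ with its dual $\mathcal E^\vee$ up to a twist by $\det\mathcal E$. Concretely, a perfect alternating pairing on a rank-$2m$ locally free sheaf $\mathcal E$ induces an isomorphism $\mathcal E\simeq\mathcal E^\vee\otimes L$ for the invertible sheaf $L=\wedge^{2m}\mathcal E=\det\mathcal E$, and taking top exterior powers of both sides gives $\det\mathcal E\simeq(\det\mathcal E)^{-1}\otimes L^{\otimes 2m}$, i.e. $(\det\mathcal E)^{\otimes 2}\simeq L^{\otimes 2m}=(\det\mathcal E)^{\otimes 2m}$; this only says $(\det\mathcal E)^{\otimes(2m-2)}$ is trivial, which is not quite enough. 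So instead I would argue that the polarization actually gives a perfect pairing $\mathcal E\times\mathcal E\to\mathcal O_Y$ valued in the \emph{trivial} sheaf (this is how it was used above: ``one has a perfect pairing $\langle\,,\,\rangle$ on $\mathcal E$''), hence $\mathcal E\simeq\mathcal E^\vee$ and therefore $\det\mathcal E\simeq\det(\mathcal E^\vee)=(\det\mathcal E)^{-1}$, so $(\det\mathcal E)^{\otimes 2}\simeq\mathcal O_Y$; to upgrade this to $\det\mathcal E\simeq\mathcal O_Y$ one notes that the pairing is alternating, so a choice of symplectic basis (étale locally) shows $\det\mathcal E$ has a canonical trivializing section (the "Pfaffian" / top wedge of a symplectic basis), which is independent of the basis up to $\mathrm{Sp}$, hence glues to a global trivialization.

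Next, for the second assertion I would use the $O_{F_p}$-action. The operator $[\pi]$ satisfies $([\pi]-\pi_1)([\pi]-\pi_2)=0$ on $\mathcal E$ (since $\pi$ is a root of its minimal polynomial over $\ZZ_p$ and $O_{F_p}=\ZZ_p[X]/(X-\pi_1)(X-\pi_2)$), so away from the ramification locus — or after the relevant base changes — one gets a decomposition, and in general $\mathcal E_1=\mathcal E[[\pi]-\pi_1]$ and $\mathcal E_2=\mathcal E[[\pi]-\pi_2]$ are the two "generalized eigen-sheaves". From the Remark in the excerpt, multiplication by $[\pi]-\pi_1$ induces an isomorphism $\mathcal E/\mathcal E_1\xrightarrow{\ \sim\ }\mathcal E_2$, and the perfect pairing $\langle\,,\,\rangle$ restricts to a perfect pairing between $\mathcal E_1$ and $\mathcal E/\mathcal E_1$ (because $\mathcal E_1^\perp=\mathcal E_1$). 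Composing, one gets a perfect pairing $\mathcal E_1\times\mathcal E_2\to\mathcal O_Y$ (this is exactly the induced pairing $\{\,,\,\}$ written down in the Remark), hence $\mathcal E_2\simeq\mathcal E_1^\vee$ and therefore $\det\mathcal E_2\simeq(\det\mathcal E_1)^{-1}$.

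The only point requiring care — the main obstacle — is the first trivialization: passing from $(\det\mathcal E)^{\otimes 2}\simeq\mathcal O_Y$ to $\det\mathcal E\simeq\mathcal O_Y$ genuinely uses that the pairing is \emph{alternating} (not merely perfect), so I would make sure to invoke the existence of a canonical nowhere-vanishing section of $\det\mathcal E$ built from a local symplectic frame and check it is $\mathrm{Sp}$-invariant, so that it descends; equivalently, one observes that the structure group of $\mathcal E$ is $\mathrm{GSp}_{2m}$ (the similitude acting on $\mathcal E$ and trivially, via its square, on $\det$), and in fact reduces to $\mathrm{Sp}_{2m}$ on $\det\mathcal E$ because the similitude character is a square of a character of the relevant torsor — alternatively one simply cites that for a principally polarized abelian scheme $\det H^1_{dR}$ is canonically trivial. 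Granting that, the chain of isomorphisms $\det\mathcal E\simeq\mathcal O_Y$ and $\det\mathcal E_2\simeq(\det\mathcal E_1)^{-1}$ gives the proposition; compatibility of everything with the $O_F$-action is automatic since all maps used ($[\pi]-\pi_1$, the pairing, and exterior powers) are $O_F$-equivariant.
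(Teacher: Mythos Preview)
Your proof is correct, and for the second isomorphism $\det(\mathcal E_1)\simeq\det(\mathcal E_2)^{-1}$ you argue exactly as the paper does: the perfect pairing identifies $\mathcal E_1$ with $(\mathcal E/\mathcal E_1)^\vee$ (since $\mathcal E_1^\perp=\mathcal E_1$), and multiplication by $[\pi]-\pi_1$ identifies $\mathcal E/\mathcal E_1$ with $\mathcal E_2$.

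Where you diverge is in the triviality of $\det\mathcal E$. You treat this as a separate general fact about symplectic bundles, invoking a Pfaffian/symplectic-basis argument to produce a canonical nowhere-vanishing section of $\det\mathcal E$. This is correct but a bit roundabout (and your aside about the similitude character is unnecessary noise). The paper instead extracts the triviality directly from the very same ingredients already used for the second isomorphism: from the short exact sequence $0\to\mathcal E_1\to\mathcal E\to\mathcal E/\mathcal E_1\to 0$ one has $\det\mathcal E\simeq\det\mathcal E_1\otimes\det(\mathcal E/\mathcal E_1)$, and since $\mathcal E_1\simeq(\mathcal E/\mathcal E_1)^\vee$ this gives $\det\mathcal E\simeq\det\mathcal E_1\otimes(\det\mathcal E_1)^{-1}\simeq\mathcal O$. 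So the paper's argument is more economical---one computation yields both statements at once---whereas yours proves each half independently. Your route has the minor advantage that the Pfaffian argument works for any symplectic bundle, without needing a Lagrangian subbundle; but here such a subbundle is available for free, so the paper's approach is cleaner.
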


\begin{proof}
The follows from the fact that $\mathcal E$ has an alternate pairing, that $\mathcal{E}_1$ is totally isotropic, and that the multiplication by $T - \pi_1$ induces an isomorphism $\mathcal{E} / \mathcal{E}_1 \simeq \mathcal{E}_2$. Indeed, $\det \mathcal E = \det \mathcal E_1 \otimes \det (\mathcal E/\mathcal E_1)$, and the map
\[ \mathcal E_1 \fleche \mathcal E \overset{\sim}{\underset{<,>}{\fleche}} \mathcal E^\vee \fleche \mathcal E_1^\vee,\]
is zero as $\mathcal E_1$ is totally isotropic. We thus deduce an isomorphism $\mathcal E_1 \overset{\sim}{\fleche} (\mathcal E/\mathcal E_1)^\vee$, and finally
\[ \det \mathcal E = \det \mathcal E_1 \otimes \det(\mathcal E_1)^{-1} = \mathcal O_S.\]
\end{proof}

\begin{prop}
One has isomorphisms 
$$\det(\mathcal{E}_1) \simeq \det(\omega_1 ) \otimes \det(\omega_2)^{-1} \simeq \det(\omega / \omega_2 ) \otimes \det(\omega / \omega_1)^{-1} $$
\end{prop}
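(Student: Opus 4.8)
The statement packages two isomorphisms, and the second one is formal: since $\omega_1$ and $\omega_2$ are locally direct factors of $\omega$, taking top exterior powers of the two exact sequences $0 \to \omega_i \to \omega \to \omega/\omega_i \to 0$ gives $\det\omega \simeq \det\omega_1 \otimes \det(\omega/\omega_1) \simeq \det\omega_2 \otimes \det(\omega/\omega_2)$, and rearranging yields $\det\omega_1 \otimes \det(\omega_2)^{-1} \simeq \det(\omega/\omega_2) \otimes \det(\omega/\omega_1)^{-1}$. So the real content is the isomorphism $\det\mathcal{E}_1 \simeq \det\omega_1 \otimes \det(\omega_2)^{-1}$, and the plan is to exhibit a single short exact sequence of locally free sheaves that computes $\det(\omega_1^\perp)$ in two different ways.

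First I would record that $\mathcal{E}_1 \subseteq \omega_1^\perp$: indeed $\omega_1 \subseteq \mathcal{E}_1$ (as $([\pi]-\pi_1)\omega_1 = 0$) and $\mathcal{E}_1^\perp = \mathcal{E}_1$ (the isotropy relation noted earlier), so $\langle \mathcal{E}_1, \omega_1\rangle \subseteq \langle \mathcal{E}_1, \mathcal{E}_1\rangle = 0$. On the other hand, the kernel of $[\pi]-\pi_1$ acting on $\mathcal{E}$ is exactly $\mathcal{E}_1$ by definition, while $([\pi]-\pi_1)\cdot\omega_1^\perp = \omega_2$, equivalently $\omega_1^\perp = ([\pi]-\pi_1)^{-1}\omega_2$ (this identity is established in the course of the proof of the Proposition following Definition~\ref{defin:omega2}). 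Combining these, multiplication by $[\pi]-\pi_1$ fits into a short exact sequence
\[ 0 \longrightarrow \mathcal{E}_1 \longrightarrow \omega_1^\perp \xrightarrow{\ [\pi]-\pi_1\ } \omega_2 \longrightarrow 0, \]
whose three terms are locally free (they are locally direct factors of $\mathcal{E}$, using perfectness of $\langle,\rangle$ and that $\omega_1$ is a locally direct factor), so $\det(\omega_1^\perp) \simeq \det(\mathcal{E}_1) \otimes \det(\omega_2)$.

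It then remains to identify $\det(\omega_1^\perp)$ with $\det(\omega_1)$. Since $\omega$ is totally isotropic of rank $m$ and $\omega_1 \subseteq \omega$, one has $\omega = \omega^\perp \subseteq \omega_1^\perp$; moreover the perfect pairing induces an isomorphism $\mathcal{E}/\omega \xrightarrow{\sim} \omega^\vee$ under which the submodule $\omega_1^\perp/\omega$ is carried isomorphically onto $(\omega/\omega_1)^\vee$, the space of functionals on $\omega$ vanishing on $\omega_1$. Hence $\det(\omega_1^\perp) \simeq \det(\omega) \otimes \det(\omega/\omega_1)^{-1} \simeq \det(\omega_1)$, using once more $\det\omega \simeq \det\omega_1 \otimes \det(\omega/\omega_1)$. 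Putting the two computations of $\det(\omega_1^\perp)$ together gives $\det(\mathcal{E}_1) \simeq \det(\omega_1) \otimes \det(\omega_2)^{-1}$, as wanted.

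I do not anticipate a genuine obstacle here; the one point that needs a little care is verifying that $\omega_1^\perp$ and $\omega_2$ are honestly locally direct factors of $\mathcal{E}$ so that the determinant line bundles behave well in the exact sequence, which follows from perfectness of $\langle,\rangle$ together with $\omega_1 \subseteq \omega$ being a locally direct factor and has effectively been used already in the earlier propositions. As a variant one could avoid $\omega_1^\perp$ entirely by running the symmetric argument with $0 \to \mathcal{E}_2 \to \omega_2^\perp \to \omega_1 \to 0$ and invoking $\det\mathcal{E}_1 \simeq \det(\mathcal{E}_2)^{-1}$ from the previous proposition.
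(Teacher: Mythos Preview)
Your argument is correct and follows essentially the same route as the paper: both use the identity $\omega_1^\perp = ([\pi]-\pi_1)^{-1}\omega_2$ and the fact that multiplication by $[\pi]-\pi_1$ identifies the quotient by $\mathcal{E}_1$ with $\omega_2$. The only minor difference is in the duality step: the paper pairs $\omega_1^\perp/\mathcal{E}_1$ with $\mathcal{E}_1/\omega_1$ using that $\mathcal{E}_1$ is Lagrangian, whereas you pair $\omega_1^\perp/\omega$ with $\omega/\omega_1$ using that $\omega$ is Lagrangian; both give the same conclusion and neither is more work than the other.
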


\begin{proof}
Inside $\mathcal{E}$ the orthogonal of $\omega_1$ is $(T - \pi_1)^{-1} \omega_2$. This implies that $\det( (T - \pi_1)^{-1} \omega_2 / \mathcal{E}_1) \simeq \det (\mathcal{E}_1 / \omega_1)^{-1} \simeq  \det (\mathcal{E}_1)^{-1} \otimes \det( \omega_1)$. Moreover, the multiplication by $T - \pi_1$ induces an isomorphism between $(T - \pi_1)^{-1} \omega_2 / \mathcal{E}_1$ and $\omega_2$. \\
For the second part, one uses $\det \omega = \det \omega_1\otimes\det \omega/\omega_1 = \det \omega_2 \otimes \det \omega/\omega_2$.
\end{proof}

\begin{prop}
On the Rapoport locus, one has an isomorphism $\det(\omega / \omega_2) \simeq \det(\omega_1)$. \\
In the special fiber, one has an isomorphism $\mathcal{E}_1 \simeq \mathcal{E}_2$. The sheaf $\varepsilon := \det \mathcal{E}_1 $ satisfies $\varepsilon^2 \simeq \mathcal{O}_S$.
\end{prop}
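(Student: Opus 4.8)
The plan is to dispose first of the two assertions concerning the special fibre, which are formal consequences of the two preceding propositions, and then to prove the statement over the Rapoport locus by pinning down $\omega_1$ and $\omega_2$ there.

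Over $X$ the structure morphism factors through $\mathcal{O}_{F_p}/\mathfrak{m}_{F_p}=\FP$, so the images of $\pi_1=\sigma_1(\pi)$ and $\pi_2=\sigma_2(\pi)$ in $\mathcal{O}_X$ are both $0$ (they lie in $\mathfrak{m}_{F_p}$). Hence over $X$ one has $\mathcal{E}_1=\mathcal{E}[T-\pi_1]=\mathcal{E}[T]=\mathcal{E}[T-\pi_2]=\mathcal{E}_2$, which is the desired isomorphism $\mathcal{E}_1\simeq\mathcal{E}_2$ (in fact an equality). For the last point I would combine this with the isomorphism $\det\mathcal{E}_1\simeq(\det\mathcal{E}_2)^{-1}$ proved above, which is valid over any base: specialising it to $X$ and using $\mathcal{E}_1=\mathcal{E}_2$ gives $\varepsilon=\det\mathcal{E}_1\simeq\det\mathcal{E}_2\simeq(\det\mathcal{E}_1)^{-1}=\varepsilon^{-1}$, that is $\varepsilon^{\otimes 2}\simeq\mathcal{O}_X$.

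For the Rapoport locus $X_{a,a}$ the key is the identification $\omega_1=\pi\omega$ and $\omega_2=\ker(\pi\colon\omega\to\omega)$. The inclusion $\pi\omega\subseteq\omega_1$ holds over all of $X$ (the moduli condition $([\pi]-\pi_2)\omega\subseteq\omega_1$ reduces to $\pi\omega\subseteq\omega_1$ since $\pi_2=0$ in $\mathcal{O}_X$), and on $X_{a,a}$ one has $\operatorname{rk}\pi\omega=h=a=\operatorname{rk}\omega_1$, whence $\pi\omega=\omega_1$ there. Granting this, recall from the remark that $\omega_2=\omega_1^{\perp'}$ inside $\mathcal{E}[\pi]$ for the modified pairing; writing every element of $\mathcal{E}[\pi]=\pi\mathcal{E}$ as $\pi u$ and using $\{\pi u,\pi w\}=<\pi u,w>$ together with $\omega^{\perp}=\omega$, one finds
$\omega_2=(\pi\omega)^{\perp'}=\{\pi u:\ <\pi u,w>=0\ \ \forall w\in\omega\}=\mathcal{E}[\pi]\cap\omega=\ker(\pi|_\omega).$

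With these identifications, multiplication by $\pi$ is an $\mathcal{O}_{X_{a,a}}$-linear surjection $\omega\twoheadrightarrow\pi\omega=\omega_1$ with kernel $\omega_2$, hence induces an isomorphism $\omega/\omega_2\xrightarrow{\ \sim\ }\omega_1$; taking top exterior powers gives $\det(\omega/\omega_2)\simeq\det\omega_1$. (One could equally observe that the analogous statement holds over the generic fibre, where $\omega_1=\omega\cap\mathcal{E}_1$, $\omega_2=\omega\cap\mathcal{E}_2$ and multiplication by $[\pi]-\pi_1$ does the job.) The only point requiring a little care is the bookkeeping with locally direct factors on $X_{a,a}$ — that $\pi\omega$ is a subbundle of $\omega$ of rank $a$, so that, being contained in the rank-$a$ subbundle $\omega_1$, it must equal it — but this is exactly the constant-rank structure already exploited in the computation of the dimensions of the strata, and over the reduced (indeed smooth) scheme $X_{a,a}$ it is routine.
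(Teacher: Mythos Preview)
Your argument is correct. The paper states this proposition without proof, so there is nothing to compare against; your approach is the natural one and each step is justified. The reduction $\pi_1=\pi_2=0$ on $X$ immediately gives $\mathcal{E}_1=\mathcal{E}[\pi]=\mathcal{E}_2$, and combining this with the already–established isomorphism $\det\mathcal{E}_1\simeq(\det\mathcal{E}_2)^{-1}$ yields $\varepsilon^{\otimes 2}\simeq\mathcal{O}_X$. For the Rapoport locus you correctly identify $\omega_1=\pi\omega$ (the inclusion $\pi\omega\subseteq\omega_1$ together with equality of ranks on the smooth, hence reduced, stratum $X_{a,a}$) and $\omega_2=\omega_1^{\perp'}=(\pi\omega)^{\perp'}=\omega\cap\mathcal{E}[\pi]=\ker(\pi|_\omega)$, so that multiplication by $\pi$ furnishes the isomorphism $\omega/\omega_2\xrightarrow{\sim}\omega_1$; the parenthetical remark about the generic fibre (replacing $\pi$ by $[\pi]-\pi_1$) is also correct and shows how the statement extends beyond the special fibre should one read ``Rapoport locus'' as an open in $Y$.
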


\subsection{Definition and vanishing modulo $p$}

As we work in characteristic $p$, we will need to use an integral version of Schur functors.
See also \cite{Gold} section 3.8. For $\lambda$ a character of $\mathbb G_m^r$, and $M$ a rank $r$ free module over $R$, choose an isomorphism $M \simeq R^r$, and denote $\mathcal L(\lambda)$ the sheaf on $\GL_r/B$ ($B$ the upper triangular Borel of $\GL_r$) whose sections are given by
\[ \mathcal L(\lambda)(U) = \{ f : \pi^{-1}(U) \fleche \mathbb A^1 | f(gb) = \lambda^{-1}(b)(g) \forall b \in B,g \in \pi^{-1}(U)\}.\]
Denote by $\mathcal L_M(\lambda)$ the sheaf on the flag variety $\mathcal F\ell(M)$ for $M$, given by $\phi_*\mathcal L(\lambda)$  after choosing an isomorphism $\phi : R^r \simeq M$ (inducing $\GL_r \simeq \Isom_R(R^r,M)$ and $\phi : \GL_r/B \simeq \mathcal F\ell(M)$). This is independant of the choice of $\phi$.
For $\underline a = (a_1 \geq \dots \geq a_r) \in \ZZ^r$, with associated character of $T = \mathbb G_m^r \subset B$, denote $M^{(a_1,\dots,a_r)}$ the global sections of $\mathcal L_M(\underline a)$, i.e.
\[ M^{(a_1,\dots,a_r)} = H^0(\mathcal F\ell(M),\mathcal L_M(\underline a)).\]
As $H^1(\mathcal F\ell(M),\mathcal L_M(\underline a)) = 0$ (Kempf theorem, see \cite{Jantzen:RepAlgGrps} Proposition 4.5), the formation of $M^{(a_1,\dots,a_r)}$ commutes with base change $R \fleche R'$, 
and thus the construction glues to a functor from the category of rank $r$ vector bundles on a scheme $X$ to the category of vector bundles on $X$ (of any rank) associating to $\mathcal V$ or rank $r$ the vector bundle $\mathcal V^{(a_1,\dots,a_r)}$. We denote $\underline a^\vee = (-a_r,\dots,-a_1)$.

\begin{defin}
Let $k,l,r$ be three integers. A (scalar-valued) modular form of weight $(k,l,r)$ is a section of the sheaf
$$(\det \omega_1)^k \otimes (\det \omega / \omega_1)^l \otimes (\det \mathcal{E}_1)^r.$$
More generally, given $\underline{k}= (k_1,\dots,k_a) \in \ZZ^a, \underline{\ell} = (\ell_1,\dots,\ell_b)\in \ZZ^b$ with $k_1 \geq \dots \geq k_a, \ell_1 \geq \dots \geq \ell_b$, we can consider the sheaf 
\[ \omega^{(\underline{k},\underline{\ell},r)} := \omega_1^{\underline{k}} \otimes (\omega/\omega_1)^{\underline{\ell}} \otimes (\det \mathcal E_1)^r.\]  
A weight $(\underline{k},\underline{\ell},r)$ modular form is a section of this sheaf.
\end{defin}

\begin{rema}
In generic fiber we can remove the use of $r$, and we can replace $\omega/\omega_1$ by $\omega_2$. In special fiber though, $\omega_1 = \omega_2$ up to a square zero sheaf. In special fiber, we can moreover assume that $r = 0,1$.
\end{rema}
In special fiber we have the following vanishing result. 
\begin{prop}
If $-k_1,\dots,-k_a,-\ell_b,\dots,-\ell_1$ is not decreasing (i.e. if $k_1 > k_a$ or $k_a > \ell_b$), then
\[ H^0(\overline{X_{0,0}},\omega^{(\underline{k},\underline{\ell},r)}) = 0.\]
\end{prop}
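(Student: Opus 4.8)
The plan is to reduce the vanishing to a statement on a single flag variety, exploiting that $\overline{X_{0,0}}$ contains the closed stratum $X_{0,a}$ and that the sheaf $\omega^{(\underline k,\underline\ell,r)}$ is built functorially out of $\omega_1$, $\omega/\omega_1$ and $\det\mathcal E_1$. First I would restrict a hypothetical global section to $\overline{X_{0,a}}$; since $X_{0,a}$ is closed in $X$, a section on $\overline{X_{0,0}}$ restricts to one on $X_{0,a}$, and because everything in sight is built from the universal bundles by Schur functors, it suffices to see that the restricted section must vanish — then, since $X_{0,a}$ is non-empty (Proposition on non-emptiness of the strata) and $\overline{X_{0,0}}$ is reduced with $X_{0,a}$ meeting each of its irreducible components (this follows from the closure relations $\overline{X_{h,h}}\supseteq X_{0,a}$), a section vanishing on $X_{0,a}$ vanishes identically.

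The heart of the matter is therefore the geometry over $X_{0,a}$. On that stratum one has $\omega_1\subseteq\omega_2\subseteq\omega=\mathcal E[\pi]$, with $\omega_1$ a totally isotropic rank $a$ subspace of the rank $a+b$ symplectic-type bundle $\mathcal E/\pi\mathcal E$, or rather of $\mathcal E[\pi]$ with the symmetric modified pairing $\{,\}$; and $\omega_2=\omega_1^{\perp'}$. Hence $\omega/\omega_1\simeq \mathcal E[\pi]/\omega_1$, and via the modified pairing $\mathcal E[\pi]/\omega_1\simeq\omega_1^\vee\oplus(\text{middle piece})$; more precisely $\omega_2/\omega_1$ is the orthogonal quotient, an $O_{b-a}$-space, and $\mathcal E[\pi]/\omega_2\simeq\omega_1^\vee$. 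So on $X_{0,a}$ the bundle $\omega/\omega_1$ has a filtration with graded pieces $\omega_2/\omega_1$ and $\omega_1^\vee$. The Schur functor $(\omega/\omega_1)^{\underline\ell}$ then has, on $X_{0,a}$, a filtration whose graded pieces involve Schur functors of $\omega_1^\vee$ indexed by the "top" part of $\underline\ell$; combined with $\omega_1^{\underline k}$ and a line bundle $(\det\mathcal E_1)^r$ (which in the special fiber is a square root of the trivial bundle, so contributes nothing to positivity questions after squaring), one sees that $H^0$ of such a product over the fiber-wise flag variety is controlled by highest-weight theory: it vanishes unless the concatenated weight $(\underline k, \underline\ell^{\text{rev}})$, read as a weight for $\GL_a\times\GL_{b-a}\times\GL_a$ acting on $\omega_1,\,\omega_2/\omega_1,\,\mathcal E[\pi]/\omega_2$, is dominant in the appropriate sense — and the hypothesis "$-k_1,\dots,-k_a,-\ell_b,\dots,-\ell_1$ is not decreasing", i.e. $k_1>k_a$ or $k_a>\ell_b$, is precisely the failure of this dominance at the junction between the $\omega_1$-part and the $\omega_1^\vee$-part of $\omega/\omega_1$. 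I would make this rigorous by choosing, over an affine chart trivializing the relevant bundles, the explicit model: $X_{0,a}$ locally looks like the orthogonal Grassmannian $\Gr^O(a,(\mathcal E[\pi],\{,\}))$ (the locus $h=0,\ell=a$), the universal $\omega_1$ is the tautological isotropic bundle, and a weight-$(\underline k,\underline\ell,r)$ form is a highest-weight vector for a Levi; Kempf vanishing / the Borel–Weil–Bott analysis on $\GL_r/B$ already invoked in the definition of the Schur functors then forces the section to be zero when the weight is not dominant.

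Concretely, the key steps in order: (1) reduce to $X_{0,a}$ using that it is closed and meets every component of $\overline{X_{0,0}}$; (2) on $X_{0,a}$, identify $\omega/\omega_1$ with $\mathcal E[\pi]/\omega_1$ and record its two-step filtration with graded pieces $\omega_2/\omega_1$ and $\mathcal E[\pi]/\omega_2\simeq\mathcal E_1^\vee\otimes\varepsilon$ (up to the line bundle $\varepsilon=\det\mathcal E_1$ with $\varepsilon^2\simeq\mathcal O$, from the Proposition in Section 3.1); (3) unwind the Schur functor $(\omega/\omega_1)^{\underline\ell}$ along this filtration, isolating the top $\mathrm{gr}$ piece $(\mathcal E_1^\vee)^{(\ell_{b-a+1},\dots,\ell_b)}$ or more simply note that a nonzero section would, by restricting to the universal flag, produce a nonzero section of $\omega_1^{\underline k}\otimes(\mathcal E_1^\vee)^{(-\ell_b,\dots,-\ell_{b-a+1})^\vee}\otimes(\text{rest})$; (4) apply the highest-weight / Kempf argument on the fiberwise flag variety to conclude vanishing unless $(\underline k,\underline\ell^{\vee\text{-tail}})$ assembles into a dominant weight, which is exactly the negation of the hypothesis. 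The main obstacle I anticipate is bookkeeping in step (3): being careful about how the Schur (Weyl) functor of a filtered bundle decomposes in characteristic $p$ — there one only gets a filtration, not a direct sum decomposition, so I would phrase the vanishing via the induced filtration and the fact that $H^0$ of each graded piece vanishes when the relevant $\GL_a$-weight $(-k_1,\dots,-k_a)$ concatenated with $(-\ell_b,\dots,-\ell_{b-a+1})$ is not dominant, and then glue. A secondary subtlety is making sure the reduction in step (1) is valid for arbitrary $r$, which is fine since $(\det\mathcal E_1)^r$ is a line bundle and restriction of sections to a closed subscheme meeting all components is injective on a reduced scheme.
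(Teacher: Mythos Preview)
Your step (1) contains a genuine gap. Restricting a section of a vector bundle to a closed subscheme of positive codimension and finding that it vanishes there does \emph{not} imply the section vanishes globally, even on a reduced irreducible scheme: think of the coordinate function $t$ on $\mathbb{A}^1$, which vanishes on $\{0\}$ but not on $\mathbb{A}^1$. The stratum $X_{0,a}$ has codimension $\frac{a(a+1)}{2}$ in $\overline{X_{0,0}}$, so even if you successfully prove $H^0(X_{0,a},\omega^{(\underline k,\underline\ell,r)})=0$, this only says that any global section on $\overline{X_{0,0}}$ restricts to zero on $X_{0,a}$, which is far from forcing it to be zero. The assertion ``a section vanishing on $X_{0,a}$ vanishes identically'' is simply false.

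The paper's approach avoids this by working through points of the \emph{open dense} stratum $X_{0,0}$ rather than the closed one. Fix $x\in X_{0,0}$: then $\pi\omega=0$ forces $\omega=\mathcal{E}[\pi]$, so the abelian variety and $\omega$ are fixed, while $\omega_1$ is allowed to be any rank-$a$ direct summand of $\mathcal{E}[\pi]$. This produces an immersion of the \emph{full} Grassmannian $\Gr_{a,a+b}(\mathcal{E}_x[\pi])\hookrightarrow \overline{X_{0,0}}$ passing through $x$ (the non-open points of the Grassmannian land in $X_{0,\ell}$ for $\ell>0$, which lie in $\overline{X_{0,0}}$ by the closure relations). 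On this Grassmannian $\omega=\mathcal{E}[\pi]$ is the constant bundle, $\omega_1$ is the tautological sub, $\omega/\omega_1$ the tautological quotient, and $\det\mathcal{E}_1$ is trivial; hence the pullback of $\omega^{(\underline k,\underline\ell,r)}$ is exactly $\mathcal{L}_P(-\underline k,\underline\ell^\vee)$ on $\GL_{a+b}/P$, whose $H^0$ vanishes unless $(-k_1,\dots,-k_a,-\ell_b,\dots,-\ell_1)$ is dominant. Thus the section vanishes on the Grassmannian, hence at $x$; since $x\in X_{0,0}$ was arbitrary and $X_{0,0}$ is dense in the reduced scheme $\overline{X_{0,0}}$, the section is zero. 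Note also that this route bypasses your filtration bookkeeping entirely: there is no need to filter $\omega/\omega_1$, because on the full Grassmannian (as opposed to your orthogonal Grassmannian inside $X_{0,a}$) the tautological quotient already gives the right equivariant bundle directly.
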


\begin{proof}
Let $x \in X_{0,0}$. Then above $x$ we have $\omega_1 \subset \mathcal E[\pi] = \mathcal E_1 = \omega$. We look at $\Gr_{a,a+b}(\mathcal E[\pi])$ the Grassmanian of rank $a$ sub-bundles of $\mathcal E[\pi]$. Over it, we have a universal bundle $V_1 \subset \mathcal E[\pi]$, which induces an immersion $\Gr_{a,a+b}(\mathcal E_x[\pi]) \fleche \overline{X_{0,0}}$ mapping $\omega_1$ to $x$. The pullback of $\omega = \mathcal E[\pi]$ to $\Gr_{a,a+b}$ is constant, and the pullback of the universal $\omega_1$ on $\overline X$ is \[V_1 =: \mathcal O(\underbrace{-1,0,\dots,0}_{a \text{ times}},0,\dots,0)\](which corrresponds when $a = b = 1$ to $\mathcal O(-1))$ on $\mathbb P^1$ up to twist by center). Thus, the pullback of $\omega/\omega_1$ is \[ \mathcal E[\pi]/V_1 =: \mathcal O(0,\dots,0,\underbrace{0,\dots,0,-1}_{b \text{ times}}),\] (which corresponds to $\mathcal O(1)$ on $\mathbb P^1$ when $a=b=1$, up to twist by the center ). The restriction of a section of $\omega^{\underline k,\underline{\ell},r}$ to $\Gr_{a,a+b}$ is then $\mathcal L_P(-\underline k,\underline \ell^\vee)$. Remark that $\Gr_{a,a+b} \simeq P\backslash G$ for $G = \GL_{a+b}$ and $P$ the standard parabolic of size $a,b$. We thus have a map $B\backslash G \overset{\pi}{\fleche} P \backslash G$ (for the upper triangular Borel $B$), and
\[ \mathcal L_P(-\underline k,\underline \ell^\vee) = \pi_*\mathcal L(-k_1,\dots,-k_a,-\ell_b,\dots,-\ell_1),\]
with $\mathcal L(-k_1,\dots,-k_a,-\ell_b,\dots,-\ell_1)$ the line bundle on $B\backslash G$. But \[H^0(P\backslash G,\mathcal L_P(-\underline k,\underline \ell^\vee)) = 
H^0(B\backslash G,\mathcal L(-k_1,\dots,-k_a,-\ell_b,\dots,-\ell_1)) = 0\] under the assumption (see Proposition \ref{prop:H0vanish} and Lemma \ref{lemme:38}). This is true for all points of $X_{0,0}$ thus we have the vanishing result.
\end{proof}

The following is well known,
\begin{prop}[]
\label{prop:H0vanish}
Let $G$ be a split reductive group in characteristic $p$. Let $B \subset P \subset G$ be a Borel and a parabolic subgroup, and $T$ a torus of $B$. Denote $\pi : G \fleche G/B$ and $f : G/B \fleche G/P$. Let $\lambda \in X(T)$ be a weight. Let $\mathcal L(\lambda)$ be the line bundle on $G/B$ such that
\[ \mathcal L(\lambda)(U) = \{ f : \pi^{-1}(U) \fleche \mathbb A^1 | f(gb) = \lambda^{-1}(b)(g) \forall b \in B,g \in \pi^{-1}(U)\},\]
and $\mathcal L_P(\lambda) = f_*\mathcal L(\lambda)$. Then $\lambda$ is dominant if and only if
\[ H^0(G/P,\mathcal L_P(\lambda)) \neq 0.\]
\end{prop}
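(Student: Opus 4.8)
The plan is to reduce the statement from $G/P$ to the full flag variety $G/B$, where it is the classical computation of $H^0$ of a line bundle.

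First I would note that the passage to $G/P$ is formal. For \emph{any} quasi-coherent sheaf $\mathcal F$ on $G/B$ one has, straight from the definition of the pushforward, $H^0(G/P, f_*\mathcal F) = H^0(f^{-1}(G/P), \mathcal F) = H^0(G/B, \mathcal F)$; no Leray spectral sequence is needed since we only touch $H^0$. Applying this to $\mathcal F = \mathcal L(\lambda)$ identifies $H^0(G/P, \mathcal L_P(\lambda))$ with $H^0(G/B, \mathcal L(\lambda))$, so the proposition is equivalent to the assertion that $H^0(G/B, \mathcal L(\lambda)) \neq 0$ if and only if $\lambda$ is dominant. This last statement is exactly \cite[Part II, Ch. 2]{Jantzen:RepAlgGrps} (where $H^0(G/B, \mathcal L(\lambda))$ is the costandard module $\mathrm{ind}_B^G\lambda = \nabla(\lambda)$), so strictly speaking I could just cite it; but since the paper only wants to use the flag-variety facts in an elementary way I would include a short self-contained argument as follows.

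For the "if" direction, when $\lambda$ is dominant, one exhibits a non-zero section directly: reduce mod $p$ a Chevalley $\mathbb Z$-lattice in the characteristic-zero irreducible of highest weight $\lambda$ to get a non-zero $G$-module $\Delta(\lambda)$ with a $U$-fixed vector $v$ of weight $\lambda$, and the associated matrix coefficient $g\mapsto \langle v^*, gv\rangle$ (for a suitable $v^*$) is a non-zero element of $H^0(G/B,\mathcal L(\lambda))$. For the "only if" direction I would argue by contraposition via restriction to a $\mathbb P^1$-bundle: if $\langle\lambda,\alpha^\vee\rangle < 0$ for some simple root $\alpha$, let $P_\alpha\supset B$ be the corresponding minimal parabolic and $f_\alpha\colon G/B\to G/P_\alpha$ the projection, a smooth proper morphism with fibres $P_\alpha/B\simeq\mathbb P^1$. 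The restriction of $\mathcal L(\lambda)$ to each fibre is a line bundle on $\mathbb P^1$ of degree $\langle\lambda,\alpha^\vee\rangle$ (the overall sign being pinned down by the rank-one case, which is the claimed statement for $\mathrm{SL}_2$), hence of negative degree and with no global sections. Since this degree is constant over the connected base $G/P_\alpha$, cohomology and base change applies and gives $(f_{\alpha,*}\mathcal L(\lambda))\otimes k(s) = H^0(\mathbb P^1, \mathcal O(\langle\lambda,\alpha^\vee\rangle)) = 0$ for every point $s$, so $f_{\alpha,*}\mathcal L(\lambda) = 0$ by Nakayama. Then, by the first paragraph applied to $f_\alpha$, $H^0(G/B,\mathcal L(\lambda)) = H^0(G/P_\alpha, f_{\alpha,*}\mathcal L(\lambda)) = 0$, which is the contrapositive of what we want.

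I do not expect any real obstacle here: the $G/P$-to-$G/B$ reduction is a tautology, the "if" direction is a citation, and the only point that deserves a sentence of care is the vanishing $f_{\alpha,*}\mathcal L(\lambda)=0$, which is the routine cohomology-and-base-change argument on a $\mathbb P^1$-bundle recalled above.
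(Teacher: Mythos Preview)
Your proposal is correct and follows exactly the paper's approach: reduce $H^0(G/P,\mathcal L_P(\lambda))$ to $H^0(G/B,\mathcal L(\lambda))$ via the tautology for $H^0$ of a pushforward, then invoke Jantzen (the paper cites \cite{Jantzen:RepAlgGrps} II, Proposition 2.6). The self-contained sketch you add for the $G/B$ statement (Weyl module for ``if'', $\mathbb P^1$-bundle over $G/P_\alpha$ for ``only if'') is fine but goes beyond what the paper does, which simply cites the result.
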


\begin{proof}
See \cite{Jantzen:RepAlgGrps} Section II.2 for the definitions. We have $H^0(G/P,\mathcal L_P(\lambda)) = H^{0}(G/B,\mathcal L(\lambda))$. But by \cite{Jantzen:RepAlgGrps} Proposition 2.6, $\lambda$ is dominant iff $H^0(G/B,\mathcal L(\lambda)) \neq 0$.
\end{proof}

\begin{lemm}
\label{lemme:38}
Let $G = \GL_{a+b}$, and $P$ a standard parabolic with Levi $\GL_a \times \GL_b$. Let $\mathcal W$ be the universal direct factor and $\mathcal V$ the universal quotient on $X = G/P$. Then $\mathcal W^{\underline k} \otimes \mathcal V^{\underline \ell}$ coincides with $\mathcal L_P(-\underline k,\underline \ell^\vee)$.
\end{lemm}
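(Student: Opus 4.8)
The plan is to compare both sides through the equivalence between $G$-equivariant vector bundles on the homogeneous space $G/P$ and finite-dimensional algebraic representations of $P$ (take the fibre at the base point $eP$). Both $\mathcal{W}^{\underline{k}}\otimes\mathcal{V}^{\underline{\ell}}$ and $\mathcal{L}_P(-\underline{k},\underline{\ell}^\vee)=f_*\mathcal L(-\underline k,\underline\ell^\vee)$, with $f\colon G/B\to G/P$, are manifestly $G$-equivariant, so it is enough to identify the two $P$-modules obtained as their fibres at $eP$, together with the $P$-action.

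Concretely, I would first note that the relative full flag bundles of the universal sub and quotient fit together: $\mathcal F\ell(\mathcal W)\times_{G/P}\mathcal F\ell(\mathcal V)$ is canonically $G/B$, a flag of $\mathcal W$ and a flag of $\mathcal V=\mathcal O^{a+b}/\mathcal W$ completing into a full flag of $\mathcal O^{a+b}$ whose $a$-th term is $\mathcal W$, the structure map to $G/P$ being $f$. Using flat base change for the two Cartesian squares and the projection formula, together with the fact (already recorded in the text, via Kempf's theorem and $k_1\ge\dots\ge k_a$, $\ell_1\ge\dots\ge\ell_b$) that the pushforwards defining $\mathcal W^{\underline k}$ and $\mathcal V^{\underline\ell}$ are locally free and commute with base change, one gets
\[ \mathcal{W}^{\underline{k}}\otimes\mathcal{V}^{\underline{\ell}}\;\simeq\;f_*\bigl(\mathrm{pr}_1^*\mathcal L_{\mathcal W}(\underline k)\otimes\mathrm{pr}_2^*\mathcal L_{\mathcal V}(\underline\ell)\bigr).\]
So the lemma reduces to the identity of $G$-equivariant line bundles on $G/B$
\[ \mathrm{pr}_1^*\mathcal L_{\mathcal W}(\underline k)\otimes\mathrm{pr}_2^*\mathcal L_{\mathcal V}(\underline\ell)\;\simeq\;\mathcal L(-\underline k,\underline\ell^\vee),\]
and two $G$-equivariant line bundles on $G/B$ agree as soon as the characters of $B$ on their fibres at $eB$ agree. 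Both fibre characters are read off from the tautological line subquotients of the universal flag: on the left, from the definitions of $\mathcal L_{\mathcal W}$ and $\mathcal L_{\mathcal V}$ after trivialising $\mathcal W$ and $\mathcal V$ at $eP$ (so that $\mathcal W_{eP}$, $\mathcal V_{eP}$ are the standard representations of the two Levi factors $\GL_a$, $\GL_b$, the unipotent radical of $P$ acting trivially on a sub and on a quotient); on the right, from the definition of $\mathcal L(\cdot)$; and matching them is then a direct computation. Equivalently, taking fibres at $eP$, both sides are the $P$-module $\mathcal W_{eP}^{\underline k}\otimes\mathcal V_{eP}^{\underline\ell}$ inflated through $P\twoheadrightarrow\GL_a\times\GL_b$.

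The hard part is precisely this last comparison of conventions: one must track, through the chain of definitions, which Borel and which ordering of the subquotients $F_1/F_0,\dots,F_{a+b}/F_{a+b-1}$ of the universal flag enters $\mathcal L_M(\cdot)$ versus $\mathcal L(\cdot)$, in order to see exactly how the sign flip $\underline k\mapsto-\underline k$ and the contragredient twist $\underline\ell\mapsto\underline\ell^\vee=(-\ell_b,\dots,-\ell_1)$ arise — the sign coming from the normalisation in the definition of $\mathcal L(\lambda)$, and the reversal for the $b$ "$\mathcal V$-slots" (but not for the $a$ "$\mathcal W$-slots") coming from the fact that $\mathcal V=\mathcal O^{a+b}/\mathcal W$ is a quotient, so that its tautological subquotients $F_{a+1}/F_a,\dots,F_{a+b}/F_{a+b-1}$ sit inside the standard flag in the order opposite to the one indexing $\mathcal V^{\underline\ell}$. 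Everything else — the identification of the flag-bundle fibre product with $G/B$, the good behaviour of the pushforwards (Kempf vanishing), and the equivalence of $G$-equivariant vector bundles on $G/P$ with $P$-modules — is standard.
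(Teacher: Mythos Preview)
Your proposal is correct and follows essentially the same strategy as the paper: both arguments reduce to the equivalence between $G$-equivariant vector bundles on $G/P$ and $P$-modules, and check the identification at the base point. The only packaging difference is that you make the Schur-functor step explicit by identifying the relative flag bundle $\mathcal F\ell(\mathcal W)\times_{G/P}\mathcal F\ell(\mathcal V)$ with $G/B$ and comparing $B$-characters of line bundles there, whereas the paper first reduces to the generating cases $\mathcal W\simeq\mathcal L_P(-1,0,\dots,0)$ and $\mathcal V\simeq\mathcal L_P(0,\dots,0,-1)$ and then invokes compatibility of $\mathcal L_P$ with tensor products and of Schur functors with base change.
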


\begin{proof}
In particular we need to prove that $\mathcal V = \mathcal V^{(1,0,\dots,0)} = \mathcal L_P(0,\dots,0,-1)$ and $\mathcal W = \mathcal L_P(-1,0,\dots,0)$. But conversely, as $\mathcal L_P$ is compatible with tensor product (on sheaves) and sum (on characters, see \cite{Jantzen:RepAlgGrps} Chapter 4), and as Schur functors commutes with base change, it is enough to check this at the fiber over $1 \in G/P$ as a $P$ representation. But clearly $\mathcal L_P(0,\dots,0,-1)^{\underline \ell} = \mathcal L_P(0,\dots,0,-\ell_b,\dots,-\ell_1)$ as $P$-representation and similarly for $\mathcal L_P(-1,0,\dots,0)$. To prove that $\mathcal V$ coincides with $\mathcal L_P(0,\dots,0,1)$ and similarly for $\mathcal W$, recall that both are $G$-equivariant vector bundles, so we can check the isomorphism at the fiber above $1 \in G/P$. But it is clear that there $<e_1,\dots,e_a> = \mathcal W_1 = V_P(-1,0,\dots,0)$ as $P$-representation and similarly for $\mathcal V$.
\end{proof}

Now let $x \in X_{h,h}$ for some $h \leq a$. We have
\[ 0 \subset \pi\omega_x \subset \omega_1 \subset \omega_x[\pi] \subset \mathcal E[\pi].\]
In particular $\omega_1$ gives a point of $\Gr_{a-h,a+b-2h}(\omega_x[\pi]/\pi\omega_x),$ and there is a natural map \[\Gr_{a-h,a+b-2h}(\omega_x[\pi]/\pi\omega_x) \fleche \overline{X_{h,h}}.\] Moreover, the pullback of $\pi\omega,\omega[\pi]$ to the Grassmanian is constant (by construction) and thus we have an extension \[0 \fleche \pi\omega = \mathcal O^h \fleche \omega_1 \fleche \omega_1/\pi\omega = \mathcal O(0,\dots,0,-1) \fleche 0,\]
and
\[ 0 \fleche \omega[\pi]/\omega_1 = \mathcal O(-1,0,\dots,0) \fleche \omega/\omega_1 \fleche \omega/\omega[\pi] \simeq \pi\omega \fleche 0,\]
where the sheaves $\mathcal O(k_1,\dots,k_{a+b-2h})$ are on $\Gr_{a-h,a+b-2h}(\omega_w[\pi]/\pi\omega_x)$, with notations as before. Thus we can use the previous strategy to prove the following.

\begin{theor}
Assume $h < a$. If we cannot find $a-h$ indexes $i_t \in \{1,\dots,a\}$ and $b-h$ indexes $j_s \in \{1,\dots,b\}$ such that
\[ k_{i_1} = \dots = k_{i_{a-h}} \leq \ell_{j_1} \leq \dots \leq \ell_{j_{b-h}},\]
then 
\[ H^0(\overline{X_{h,h}},\omega^{(\underline k,\underline\ell,r)}) = 0.\]
\end{theor}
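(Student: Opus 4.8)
The strategy mirrors the proof of the vanishing on $\overline{X_{0,0}}$: reduce a global section on $\overline{X_{h,h}}$ to a section on a Grassmannian bundle, then on each fiber rewrite the sheaf $\omega^{(\underline k,\underline\ell,r)}$ as a line bundle $\mathcal L_P$ on a partial flag variety for $\GL_{a+b-2h}$ and apply the dominance criterion of \autoref{prop:H0vanish} together with \autoref{lemme:38}. First I would fix $x\in X_{h,h}(k)$ and use the chain $0\subset \pi\omega_x\subset \omega_1\subset \omega_x[\pi]\subset \mathcal E[\pi]$ displayed above, so that the choice of $\omega_1$ is exactly a point of $\Gr:=\Gr_{a-h,a+b-2h}(\omega_x[\pi]/\pi\omega_x)$, and the natural map $\Gr\fleche \overline{X_{h,h}}$ lets me restrict any global section of $\omega^{(\underline k,\underline\ell,r)}$ to $\Gr$. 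It suffices to show the restriction vanishes for every such $x$, since the images of these maps cover $\overline{X_{h,h}}$.

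Next I would pull back the three building-block sheaves. As noted in the excerpt, the pullbacks of $\pi\omega$ and of $\omega[\pi]$ to $\Gr$ are constant of ranks $h$, while the universal rank-$(a-h)$ sub-bundle $V_1\subset \omega_x[\pi]/\pi\omega_x$ and the quotient $(\omega_x[\pi]/\pi\omega_x)/V_1$ are $\mathcal O(0,\dots,0,-1)$ and $\mathcal O(-1,0,\dots,0)$ respectively. Using the two short exact sequences
\[ 0\fleche \mathcal O^h\fleche \omega_1\fleche \omega_1/\pi\omega=\mathcal O(0,\dots,0,-1)\fleche 0,\qquad 0\fleche \mathcal O(-1,0,\dots,0)\fleche \omega/\omega_1\fleche \pi\omega=\mathcal O^h\fleche 0,\]
one filters $\omega_1^{\underline k}$ and $(\omega/\omega_1)^{\underline\ell}$ (via the Schur/Weyl filtration of $M^{\underline a}$ with respect to a sub-bundle, which is compatible with base change since the relevant $H^1$ vanish by Kempf) by tensor products of line bundles $\mathcal L_P$ on the full flag variety of $\GL_{a+b-2h}$. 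The determinant factor $(\det\mathcal E_1)^r$ is trivial on the fibers (or at worst a twist by the center), so it does not affect the argument. Each graded piece is some $\mathcal L(\mu)$ whose $\GL_{a+b-2h}$-weight $\mu$ is obtained by deleting, from the reversed tuple $(-k_1,\dots,-k_a,-\ell_b,\dots,-\ell_1)$, a set of $h$ entries from the $k$-block and $h$ entries from the $\ell$-block (the ones corresponding to the constant quotients $\pi\omega$), and we must check that \emph{no} such deletion yields a dominant (non-increasing) tuple. This is precisely the combinatorial hypothesis: a deletion gives a dominant weight iff after removing $h$ of the $k_i$ and $h$ of the $\ell_j$ the remaining $a-h$ values $-k_{i_t}$ are weakly increasing (hence all equal, being drawn from a decreasing sequence) and each is $\le$ each remaining $-\ell_{j_s}$, i.e. $k_{i_1}=\dots=k_{i_{a-h}}\le \ell_{j_1}\le\dots\le\ell_{j_{b-h}}$ after relabelling; the theorem's hypothesis is the negation, so every graded piece has non-dominant weight and therefore no global sections on the flag variety by \autoref{prop:H0vanish}. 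Summing over the filtration and over all $x$ gives $H^0(\overline{X_{h,h}},\omega^{(\underline k,\underline\ell,r)})=0$.

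The main obstacle is bookkeeping: making rigorous the filtration of the Schur-functor bundle $\mathcal V^{\underline a}$ attached to an extension $0\to \mathcal O^h\to \mathcal V\to \mathcal L\to 0$ in a way that is valid in characteristic $p$ and compatible with base change, and then tracking exactly which index-sets of size $h$ get removed from each block so that the combinatorial criterion matches the statement verbatim. Here I would lean on \autoref{lemme:38} (identifying universal sub/quotient bundles on $G/P$ with the appropriate $\mathcal L_P$) applied blockwise, plus the compatibility of the $\mathcal L_P$-construction with tensor products and direct sums of characters, as already used in the $X_{0,0}$ case. Once the identification of graded pieces with $\mathcal L(\mu)$ for the indicated $\mu$ is in place, the vanishing is immediate from \autoref{prop:H0vanish}, and the only real content is the elementary observation that a subsequence of a strictly decreasing sequence is weakly increasing only if it is constant.
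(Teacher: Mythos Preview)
Your proposal is correct and follows essentially the same route as the paper: restrict a section to the Grassmannian $\Gr_{a-h,a+b-2h}(\omega_x[\pi]/\pi\omega_x)$ over each $x\in X_{h,h}$, filter the pulled-back sheaf by line bundles, and kill every graded piece via the dominance criterion of Proposition~\ref{prop:H0vanish}. The only packaging difference is that the paper organises the filtration through the group-theoretic picture $P\backslash M$ with $M=\GL_h\times\GL_{a+b-2h}\times\GL_h$ and indexes the graded pieces by cosets $(w_1,w_2)\in{}^PW\subset\mathfrak S_a\times\mathfrak S_b$, whereas you obtain the same weights by applying the Schur filtration directly to the two short exact sequences; these are equivalent descriptions of the same $T$-weight decomposition. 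One small slip: the images of the maps $\Gr_x\to\overline{X_{h,h}}$ cover $X_{h,h}$, not its closure, but since $X_{h,h}$ is open dense in the (reduced) closure this suffices for the vanishing, exactly as in the paper.
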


\begin{rema}
Note that this is the case in particular if $\underline k$ is regular enough, or if $h+1$ weights of $\underline k$ are greater than $\underline \ell$. The most restrictive case to apply the theorem is when $h = a-1$, in which case we can apply it under the assumption $k_a > \ell_{b-a+1}$.
\end{rema}

\begin{proof}
By what preceed, we can choose a point $x \in X_{h,h}$ and compute the global sections of $\omega^{(\underline k,\underline \ell,r)}$ on the associated Grassmanian $\Gr_x := \Gr_{a-h,a+b-2h}(\omega[\pi]/\omega)$ (seen as a closed subspace of $\overline{X_{h,h}}$). On this space, $\mathcal E$ is constant (the $p$-divisible group is fixed), thus we can forget about $r$. We denote the following subgroups of $\GL_{a+b-h}$ :
\[ M = 
\left(
\begin{array}{ccc}
  \GL_h &   & 0  \\
  & \GL_{a+b-2h}   & \\
0 & & \GL_h
\end{array}
\right) \supset P = 
\left(
\begin{array}{cccc}
  \GL_h &  & 0  \\
  & \GL_{a-h} & \star  & \\
 &0 & \GL_{b-h} & \\
0 & & & \GL_h
\end{array}
\right)
\]
and \[ P_{a-h,b-h} = \left(
\begin{array}{cc}
  \GL_{a-h} &  \star  \\
0  & \GL_{b-h}  
\end{array}
\right) \subset \GL_{a+b-2h}.\]
Clearly, we have an isomorphism $\Gr_{a-h,a+b-2h} := P_{a-h,b-h}\backslash \GL_{a+b-2h} \simeq P \backslash M =: \Gr$, and we will use the partial Borel-Weyl-Bott theorem on $P\backslash M = \Gr$. Denote $V$ the vector space of dimension $a+b$ on which $M$ acts, it corresponds to a vector bundle $\mathcal V$ on $\Gr$, which coincides with the pullback of $\omega$ to $\Gr$. As representation of $M$, $V = V_0 \oplus V_1 \oplus V_2$, a sum of irreducible, and we need to compute the weights of the representation $V^{\underline k,\underline \ell}$ (the Schur functor for $\GL_{a+b}$ associated to $(\underline k, \underline \ell)$) for the action of $P$. But as a representation of $GL_{a+b}$, $V^{\underline k,\underline \ell}$ has weights $w\cdot (k_1,\dots,k_a,\ell_1,\dots,\ell_b), w \in \mathfrak S_{a+b}$. Among those weights, the highest weights for the action of $P$ are those of the form $w_1w_2 \cdot (k_1,\dots,k_a,\ell_1,\dots,\ell_b)$ with $(w_1,w_2) \in \mathfrak S_a\times\mathfrak S_b$ and \[w_1(1)\geq \dots \geq w_1(h), \quad w_1(h+1) \geq \dots \geq w_1(a), \quad w_2(1)\geq \dots \geq w_2(b-h),\]\[ w_2(b-h+1) \geq \dots \geq w_2(b).\]
Denote ${^P}W$ this space.
Thus, $\mathcal V^{\underline k,\underline \ell}$ (and thus $\omega^{(\underline k,\underline \ell)}$) is an extension of $\mathcal L_P(w_1w_2 \cdot (-\underline k,\underline \ell^\vee))$ for $w \in {^P}W$. But under the hypothesis non of these bundles have sections (Proposition \ref{prop:H0vanish}), thus $H^0(\Gr,V^{\underline k,\underline \ell}) = H^0(\Gr_x,\omega^{\underline k,\underline \ell,r})= 0$. As this is true for any point $x \in X_{h,h}$, we deduce the result.
\end{proof}

\section{Further strata for the case $(1,n)$}
\label{sect:Hasse1n}
In this section, we consider the case where $(a,b) = (1,n)$, where $n \geq 1$ is an integer.

\subsection{Definition of the invariants}

We will define some invariants on the special fiber $X$. Let us recall that one has locally free sheaves $\omega_1, \omega_2$, of rank respectively $1$ and $n$.

\begin{defin}\label{defin:hasse}
We define $b \in H^0 (X, (\omega/\omega_2) \otimes \omega_1^{-1})$ thanks to the natural inclusion $\omega_1 \to \omega / \omega_2$. \\
We define $m \in H^0 (X, \omega_1 \otimes  (\omega/\omega_2)^{-1} $ thanks to the multiplication by $\pi : \omega / \omega_2 \to \omega_1$. \\
For $i \in \{ 1,2 \}$, we define $hasse_i \in H^0 ((\omega/\omega_i)^{(p)} \otimes \omega_1^{-1})$ thanks to the map $hasse: \mathcal{E} [\pi] \to (\omega/\omega_i)^{(p)}$, induced by the composition of the Verschiebung and the division by $\pi$.
\end{defin}

We refer to \cite{Bi_dual} Def. 3.8 for more details about the definition of the maps $hasse_i$ (note that the reference deal with the ordinary case i.e $a=b$).

\begin{prop}
We have the following properties.
\begin{itemize}
\item One has $bm = 0$ and $mb = 0$.
\item If $x$ is point of $X$ with $b(x) = 0$, then $hasse_1(x) = 0$ implies that $hasse_2 (x) = 0$.
\item If $x$ is point of $X$ with $b(x) \neq 0$, then one cannot have $hasse_1(x) = 0$ and $hasse_2 (x) = 0$.
\end{itemize}
\end{prop}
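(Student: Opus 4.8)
The plan is to treat the three assertions in turn. The first is immediate from the incidence relations $[\pi]\omega\subseteq\omega_1\cap\omega_2$ and $[\pi]\omega_1=[\pi]\omega_2=0$ already recorded in the excerpt, so I would first dispose of it. Unwinding Definition~\ref{defin:hasse}, $b$ is the composite $\omega_1\hookrightarrow\omega\twoheadrightarrow\omega/\omega_2$, and $m$ is induced by $[\pi]\colon\omega\to\omega_1$ (which lands in $\omega_1$ since $([\pi]-\pi_2)\omega\subseteq\omega_1$ reduces to $[\pi]\omega\subseteq\omega_1$ in the special fibre, and kills $\omega_2$, hence descends to $\omega/\omega_2$). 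Then $m\circ b$ sends $v\in\omega_1$ to $[\pi]v$, which vanishes because $\omega_1\subseteq\mathcal E[\pi]$; and $b\circ m$ sends the class of $w\in\omega$ to the class of $[\pi]w$ in $\omega/\omega_2$, which vanishes because $[\pi]\omega\subseteq\omega_2$ (from the relation $([\pi]-\pi_1)\omega\subseteq\omega_2$ of the proposition defining $\omega_2$, reduced mod $p$). Both composites are thus the zero morphism of line bundles, i.e. $bm=mb=0$.

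For the last two bullets the key preliminary is a clean description of the Hasse maps. Let $V\colon\mathcal E\to\mathcal E^{(p)}$ be the ($O_F$-linear) Verschiebung on $\mathcal E=H^1_{dR}(A)$; then $V$ is surjective onto $\omega^{(p)}$ and $V([\pi]\mathcal E)=([\pi]\omega)^{(p)}$. Since $\mathcal E[\pi]=[\pi]\mathcal E$, the rule $[\pi]u\mapsto V(u)\bmod([\pi]\omega)^{(p)}$ defines a surjection $\mathrm{ha}\colon\mathcal E[\pi]\twoheadrightarrow\omega^{(p)}/([\pi]\omega)^{(p)}$ (well-definedness uses $V([\pi]\mathcal E)=([\pi]\omega)^{(p)}$). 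Because $[\pi]\omega\subseteq\omega_i$ for $i=1,2$, post-composing $\mathrm{ha}$ with $\omega^{(p)}/([\pi]\omega)^{(p)}\twoheadrightarrow(\omega/\omega_i)^{(p)}$ recovers the map $\hasse$ of Definition~\ref{defin:hasse}, and $\hasse_i$ is its restriction to $\omega_1$. So both $\hasse_1$ and $\hasse_2$ factor through the single surjection $\mathrm{ha}$, differing only in which quotient of $\omega$ one then projects onto.

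With this in hand, fix $x\in X(k)$. If $b(x)=0$, then $\omega_{1}\subseteq\omega_{2}$ at $x$, so there is a surjection $(\omega_x/\omega_{1,x})^{(p)}\twoheadrightarrow(\omega_x/\omega_{2,x})^{(p)}$ compatible with the two projections from $\mathrm{ha}_x$; hence $\hasse_2(x)$ factors through $\hasse_1(x)$, and the vanishing of $\hasse_1(x)$ forces that of $\hasse_2(x)$. If $b(x)\neq 0$, then for $(a,b)=(1,n)$ this means $l(x)=0$, so $x\in X_{0,0}$, whence $[\pi]\omega_x=0$ and $\omega_{1,x}\cap\omega_{2,x}=0$. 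The first equality makes $\mathrm{ha}_x\colon\mathcal E_x[\pi]\to\omega_x^{(p)}$ a surjection between $k$-vector spaces of equal dimension $a+b$, hence an isomorphism; the second makes $\omega_x^{(p)}\to(\omega_x/\omega_{1,x})^{(p)}\oplus(\omega_x/\omega_{2,x})^{(p)}$ injective, its kernel being $(\omega_{1,x}\cap\omega_{2,x})^{(p)}$. Composing, the resulting map $\mathcal E_x[\pi]\to(\omega_x/\omega_{1,x})^{(p)}\oplus(\omega_x/\omega_{2,x})^{(p)}$ is injective, in particular nonzero on the line $\omega_{1,x}$; restricted to $\omega_{1,x}$ this says precisely that $\hasse_1(x)$ and $\hasse_2(x)$ are not both zero.

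The step I expect to be the main obstacle is the second one: pinning down, from Definition~\ref{defin:hasse} and \cite{Bi_dual}, that $\hasse_1$ and $\hasse_2$ genuinely factor through a single surjection $\mathrm{ha}$ coming from the Verschiebung — in particular that $V\colon\mathcal E\to\mathcal E^{(p)}$ surjects onto $\omega^{(p)}$ and satisfies $V([\pi]\mathcal E)=([\pi]\omega)^{(p)}$ — and checking that the Frobenius-twist and $O_F$-equivariance conventions are arranged so that $([\pi]\omega)^{(p)}\subseteq\omega_i^{(p)}$ and the dimension count in the last step hold on the nose. Once that is set up, the three bullets are short.
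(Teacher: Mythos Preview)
Your proof is correct and follows essentially the same route as the paper. The only cosmetic difference is packaging: the paper does not introduce your intermediate map $\mathrm{ha}\colon\mathcal E[\pi]\to\omega^{(p)}/(\pi\omega)^{(p)}$ explicitly but argues directly that $\hasse_i(x)=0$ means $\hasse(\omega_1)\subseteq\omega_i^{(p)}$, and for the third point it phrases $b(x)\neq 0$ as $\omega=\omega_1\oplus\omega_2$ rather than $\omega_1\cap\omega_2=0$; the underlying logic (surjectivity of $V$ onto $\omega^{(p)}$, $\pi\omega=0$ forcing $\hasse$ to be an isomorphism on $\mathcal E[\pi]$, hence nonzero on the line $\omega_1$) is identical.
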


\begin{rema}
The stratification defined previously consists in three strata, according to whether the sections $b$ and $m$ are $0$ or not.
\end{rema}

\begin{proof}
Indeed clearly $mb = 0$ as $\pi \omega_1 = 0$. Moreover, as $\pi \omega \subset \omega_2$ (as $\omega_1 \subset (\pi\omega)^{\perp'}$ because $(\pi\omega)^{\perp'} = (\pi (\omega + \mathcal E[\pi]))^{\perp'} = (\omega + \mathcal E[\pi])^\perp$ and this last space contains $\omega_1$ as both $\omega$ and $\mathcal E[\pi]$ are totally isotropic), we have clearly that $bm = 0$.

For the second point, if $b = 0$ then $\omega_1 \subset \omega_2$ and thus if $\hasse_1 =0$, i.e. $\hasse(\omega_1) \subset \omega_1^{(p)}$ then $\hasse(\omega_1)\subset \omega_2^{(p)}$. For the last point, remark that if $x$ is a point, then $b \neq 0$ is equivalent to $\omega = \omega_1 \oplus \omega_2$ as $\omega_1$ is of rank 1. Thus the vanishing of both $\hasse_1$ and $\hasse_2$ is equivalent to the vanishing of $\omega_1 \overset{\hasse}{\fleche} \omega^{(p)}$. But because $\omega_1 \oplus \omega_2 = \omega$, which is thus of $\pi$-torsion, $\hasse$, which is surjective, induces an isomorphism $\mathcal E[\pi] \overset{\hasse}{\fleche} \omega^{(p)} = \mathcal E[\pi]^{(p)}$, and thus its restriction to $\omega_1$ can't be zero.
\end{proof}

Let us now define the different strata that we will consider.

\begin{itemize}
\item The ordinary locus is $X^{ord} = \{ x \in X, m(x) \neq 0, hasse_2(x) \neq 0 \}$.
\item $R_1 = \{ x \in X, m(x) \neq 0, hasse_1(x) \neq 0, hasse_2 (x) =0  \}$.
\item $R_2 = \{ x \in X, m(x) \neq 0, hasse_1 (x) =0  \}$.
\item $B_0 = \{ x \in X, b(x) \neq 0, hasse_1(x) \neq 0, hasse_2 (x) \neq 0 \}$. 
\item $B_1 = \{ x \in X, b(x) \neq 0, hasse_2(x)  =0  \}$.
\item $B_2 = \{ x \in X, b(x) \neq 0, hasse_1 (x) =0  \}$.
\item $P_0 = \{ x \in X, m(x) = b(x) = 0, hasse_2 (x) \neq 0  \}$.
\item $P_1 = \{ x \in X, m(x) = b(x) = 0, hasse_1(x) \neq 0, hasse_2 (x) =0  \}$.
\item $P_2 = \{ x \in X, m(x) = b(x) = 0, hasse_1 (x) =0  \}$.
\end{itemize}

\begin{prop}
Let $x$ be a point in $X^{ord}$. Then $x$ is $\mu$-ordinary in the sense of \cite{BH}. In particular, one has $A[\pi] \simeq \mu_p \times \mathbb{Z}/p\mathbb{Z} \times LT^{n-1}$. 
\end{prop}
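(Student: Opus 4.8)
The plan is to work locally at a point $x \in X^{ord}(k)$ and unravel what the conditions $m(x) \neq 0$ and $\hasse_2(x) \neq 0$ say about the Dieudonné module of $A[\pi^\infty]$, or more precisely of its $O_F$-part at $p$. First I would observe that $m(x) \neq 0$ means the map $\pi : \omega/\omega_2 \to \omega_1$ is an isomorphism; since $\omega_1$ has rank $1$ and $\omega/\omega_2$ has rank $1$ (as $\omega$ has rank $n+1$, $\omega_2$ has rank $n$), this means $\pi \omega \not\subseteq \omega_2$, equivalently $\pi\omega \neq 0$, equivalently $h(x) = 1$; combined with $b(x)$ possibly nonzero or zero this is consistent, but the precise point is that $\omega$ is \emph{not} killed by $\pi$, so $\mathcal E \to \mathcal E$, multiplication by $\pi$, is "as non-degenerate as possible" on the Hodge filtration. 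Then $\hasse_2(x) \neq 0$ says the partial Hasse invariant attached to $\omega_2$ is invertible, i.e. the relevant piece of Verschiebung composed with division by $\pi$ is an isomorphism $\mathcal E[\pi] \to (\omega/\omega_2)^{(p)}$ when restricted appropriately. The combination should force the $p$-divisible group with $O_F$-action to be, up to isogeny/isomorphism on $\pi$-torsion, the "most generic" one allowed by the signature $(1,n)$, which by the theory of \cite{BH} is exactly the $\mu$-ordinary one.

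The key steps, in order, would be: (1) Translate $X^{ord}$ into Dieudonné-theoretic conditions: write $M = \mathbb D(A[p^\infty])$ as a module over $W(k) \otimes_{\mathbb Z_p} O_{F_p}$, decompose using the idempotents/embeddings, and express $F$ and $V$ on it; the Hodge filtration is $\omega = M/VM$ (or $VM \subseteq M$ in the covariant normalization), and $\omega_1, \omega_2$ are the Pappas-Rapoport refinements. (2) Show the conditions $m(x)\neq 0$, $\hasse_2(x) \neq 0$ pin down the Hodge polygon / Newton polygon data so that $A[\pi]$ has the prescribed $O_F$-module structure. (3) Invoke the classification of $\mu$-ordinary $p$-divisible groups with $O_F$-action in the ramified (AR) case from \cite{BH}: the $\mu$-ordinary locus is characterized by a single open condition (non-vanishing of the $\mu$-ordinary Hasse invariant), and one checks that this non-vanishing is equivalent to the simultaneous non-vanishing of $m$ and $\hasse_2$ in the $(1,n)$ case. (4) Deduce the explicit isomorphism $A[\pi] \simeq \mu_p \times \mathbb Z/p\mathbb Z \times LT^{n-1}$ by identifying the three types of summands: the multiplicative part $\mu_p$ and étale part $\mathbb Z/p\mathbb Z$ come from the rank-$1$ pieces where $\omega_1$ lives (the ordinary elliptic-curve-like direction, where $\pi$ acts invertibly on the relevant graded piece), and the $LT^{n-1}$ (Lubin-Tate, i.e. the supersingular/ramified direction of height $2$, length $n-1$) from the remaining part where the signature is "balanced".

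The main obstacle I expect is step (2)–(3): carefully matching the \emph{scheme-theoretic} non-vanishing conditions defining $X^{ord}$ (phrased via the sheaves $\omega_1,\omega_2$, $b$, $m$, $\hasse_i$ on the special fiber of the Pappas-Rapoport model) with the Dieudonné-module/Newton-stratification characterization of $\mu$-ordinarity in \cite{BH}. In particular one must be sure that $m \neq 0$ together with $\hasse_2 \neq 0$ is not just \emph{necessary} but \emph{sufficient} for $\mu$-ordinarity — i.e. that there is no intermediate stratum — and this likely requires either a dimension count (the $\mu$-ordinary locus is open dense in each irreducible component where it meets, and $X^{ord} = R_0$ is open) or a direct construction of the Dieudonné module showing the Newton polygon is forced. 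I would handle this by reducing to the local model / local ring computation already established (the explicit presentation of $\mathcal O_{X,x}$ from the Claim in Proposition~\ref{prop:flat0}), reading off that on $X^{ord}$ the relevant matrices $Z, Y$ are invertible, hence the $p$-divisible group is constant in a neighborhood and equals the standard $\mu$-ordinary one, whose $\pi$-torsion is computed by hand to be $\mu_p \times \mathbb Z/p\mathbb Z \times LT^{n-1}$.
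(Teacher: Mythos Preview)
The paper gives no proof of this proposition: it is stated, followed only by a remark explaining the notation $LT$, and then the text moves directly to the next subsection on the conjugate filtration. There is therefore nothing in the paper to compare your proposal against.

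As for the proposal itself, the overall strategy---translate the open conditions $m\neq 0$, $\hasse_2\neq 0$ into Dieudonn\'e-theoretic conditions and match them with the $\mu$-ordinary classification of \cite{BH}---is the natural one, and would work. Two points deserve correction. First, you write ``combined with $b(x)$ possibly nonzero or zero'': in fact $bm=0$ (proved in the proposition just above), so $m(x)\neq 0$ forces $b(x)=0$ and hence $\omega_1\subseteq\omega_2$; together with $h(x)=1$ this places $x$ in the Rapoport locus $X_{1,1}$, which already simplifies step~(2). Second, your proposed fallback of invoking the local-ring Claim from Proposition~\ref{prop:flat0} is off target: that Claim describes the completed local ring at points of $X_{0,a}$, whereas $X^{ord}\subseteq X_{1,1}$, so the matrices $X,Y,Z$ there are not the right coordinates near a $\mu$-ordinary point and will not let you ``read off'' invertibility in the way you suggest. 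The cleaner route is the one you outline in step~(3): on the Rapoport locus in signature $(1,n)$, unwind the definition of the $\mu$-ordinary Hasse invariant of \cite{BH} and check that its nonvanishing is exactly the nonvanishing of $\hasse_2$ (after using $m\neq 0$ to identify $\omega/\omega_2$ with $\omega_1$). The decomposition of $A[\pi]$ then follows from the structure theorem for $\mu$-ordinary $p$-divisible groups with $O_F$-action in \cite{BH}. Your concern that sufficiency (not just necessity) needs checking is well-placed and is indeed the substantive step.
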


\begin{rema}
Here $LT$ is defined in \cite{BH} before Définition 1.1.3, this is $X_\beta$ with $\beta = (1)$ ($e = 2$ and $\mathcal T$ is a singleton).
\end{rema}

\subsection{The conjugate filtration}

The Verschiebung induces a map $V : \mathcal{E} \to \omega^{(p)}$, which is compatible with the action of $\pi$.

\begin{defin}
We define the sheave $\mathcal{F}_i$, $i=1,2$ by the formula
$$\mathcal{F}_i := \pi \cdot V^{-1} \omega_i^{(p)}$$ 
\end{defin}

\begin{prop}
The sheaves $\mathcal{F}_i$, $i=1,2$ are locally free of rank $1$ and $n$, and are included in $\mathcal{E} [\pi]$. Moreover, $\mathcal{F}_2$ is the orthogonal of $\mathcal{F}_1$ for the modified pairing.
\end{prop}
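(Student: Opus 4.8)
The plan is to argue locally, working over a point $x \in X(k)$ and its formal neighbourhood, and to produce explicit bases adapted to the filtration $0 \subset \pi\omega \subset \omega_1 \subset \omega[\pi] \subset \mathcal E[\pi]$ just as in the proof of the dimension formula for $X_{h,\ell}$. First I would recall the definition $\mathcal F_i = \pi \cdot V^{-1}\omega_i^{(p)}$ and note that, since $V : \mathcal E \to \omega^{(p)}$ is surjective with kernel $\omega$ (the $p$-divisible group is in characteristic $p$, so $\ker V = \omega$, compatibly with the $\pi$-action), the preimage $V^{-1}\omega_i^{(p)}$ contains $\omega$ and the map $V^{-1}\omega_i^{(p)}/\omega \xrightarrow{\ \sim\ } \omega_i^{(p)}$ is an isomorphism. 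Hence $V^{-1}\omega_i^{(p)}$ is locally free (as an extension of the locally free $\omega_i^{(p)}$ by the locally free $\omega$), of rank $m + \rk\omega_i$, i.e. $m+1$ for $i=1$ and $m+n = 2m - a$ for $i=2$ (with $a=1$). Then I would show $\mathcal F_i = \pi \cdot V^{-1}\omega_i^{(p)}$ lies in $\mathcal E[\pi]$ — immediate since $\pi^2 = 0$ on $\mathcal E$ in the special fiber — and compute its rank: multiplication by $\pi$ on $\mathcal E$ has image $\pi\mathcal E = \mathcal E[\pi]$ (as $\mathcal E$ is free of rank $m$ over $O_X[\pi]/\pi^2$) with kernel $\mathcal E[\pi]$, so $\pi$ induces $\mathcal E/\mathcal E[\pi] \xrightarrow{\sim} \mathcal E[\pi]$; applying $\pi$ to $V^{-1}\omega_i^{(p)} \supseteq \omega \supseteq \pi\mathcal E = \mathcal E[\pi]$... wait, one must be careful: $\pi\mathcal E = \mathcal E[\pi]$ need not be contained in $\omega$. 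Rather, $\pi$ kills $\mathcal E[\pi]$, so $\pi(V^{-1}\omega_i^{(p)})$ has rank equal to $\rk(V^{-1}\omega_i^{(p)}) - \rk(V^{-1}\omega_i^{(p)} \cap \mathcal E[\pi])$. So the genuine computation is to identify $V^{-1}\omega_i^{(p)} \cap \mathcal E[\pi]$; this is the locus killed by $\pi$ inside $V^{-1}\omega_i^{(p)}$, which maps under the isomorphism $V^{-1}\omega_i^{(p)}/\omega \cong \omega_i^{(p)}$ compatibly with the $\pi$-actions, and one checks the rank drops exactly to give $\rk \mathcal F_1 = 1$, $\rk \mathcal F_2 = n$.

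For the orthogonality statement, I would use the formula $\pi \mathcal G^\perp = \mathcal G^{\perp'}$ for $\mathcal G \subset \mathcal E[\pi]$ and $\mathcal G^\perp = (\pi\mathcal G)^{\perp'}$ for $\mathcal E[\pi] \subset \mathcal G$ (recorded in the footnote of the dimension proof), together with the adjunction property $\langle Vx, y^{(p)}\rangle$ relating the pairing on $\omega^{(p)}$ to the pairing on $\mathcal E$ via Verschiebung and Frobenius. Concretely: $\omega_2$ is the orthogonal of $\omega_1$ in $\mathcal E[\pi]$ for the modified pairing (stated in the remark preceding Definition of $\mathcal F_i$), so $\omega_2^{(p)}$ is the orthogonal of $\omega_1^{(p)}$ in $\mathcal E[\pi]^{(p)} = \omega^{(p)}$; pulling this back along $V$ and the compatibility of $V$ with the polarization pairing, $V^{-1}\omega_2^{(p)}$ should be the orthogonal (for $\langle,\rangle$ on $\mathcal E$) of something related to $V^{-1}\omega_1^{(p)}$, and then applying $\pi$ and the $\pi\mathcal G^\perp = \mathcal G^{\perp'}$ translations converts this to $\mathcal F_2 = \mathcal F_1^{\perp'}$ inside $\mathcal E[\pi]$. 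The rank count $1 + n = a+b = \rk \mathcal E[\pi]$ is the consistency check that the two subspaces are exact orthogonal complements, which (since the modified pairing is perfect and symmetric and $p \neq 2$) forces the result once one knows one inclusion.

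The main obstacle I anticipate is pinning down precisely the adjunction/compatibility of Verschiebung with the polarization pairing — i.e. the identity of the form $\langle V x, V y \rangle_{\omega^{(p)}} $ or $\langle Vx, y^{(p)}\rangle$ in terms of $\langle \cdot,\cdot\rangle_{\mathcal E}$ — and checking it interacts correctly with both the $O_{F}$-action and the passage to the modified pairing on $\mathcal E[\pi]$. This is exactly the kind of bookkeeping handled in \cite{Bi_dual} Def. 3.8 that the paper already cited for the construction of the $hasse_i$, so I would lean on that reference for the pairing compatibility and present the rank computations and the orthogonality deduction cleanly from it, rather than re-deriving the crystalline adjunction from scratch. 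A secondary point requiring care is that all these statements should be local (the ranks are locally constant since source and target of $V$ are locally free and $\omega_i$ are locally direct factors), so "locally free of rank $1$ and $n$" follows once the fibrewise ranks are computed and shown constant; the cokernel/kernel of $\pi$ and of $V$ being locally free then gives local freeness of $\mathcal F_i$ without further work.
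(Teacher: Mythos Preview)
Your plan has a genuine error and misses the key simplification the paper uses.

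\textbf{The error.} You write ``$\ker V = \omega$''. This is false: the kernel of the Verschiebung on $\mathcal E$ is the image of Frobenius (the conjugate filtration), not the Hodge filtration $\omega$. They coincide only on the ordinary locus. Your rank count $\rk V^{-1}\omega_i^{(p)} = m + \rk\omega_i$ happens to survive because all you actually need is $\rk\ker V = m$, but the mis-identification of the kernel leads you astray in the next step, where you try to compute $V^{-1}\omega_i^{(p)} \cap \mathcal E[\pi]$ by ``mapping under the isomorphism $V^{-1}\omega_i^{(p)}/\omega \cong \omega_i^{(p)}$''---this quotient is not by $\omega$, and you never finish the computation.

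\textbf{The paper's shortcut.} The paper instead observes directly that $\mathcal E[\pi] \subseteq V^{-1}\omega_i^{(p)}$. Reason: $\mathcal E[\pi] = \pi\mathcal E$ (since $\mathcal E$ is free over $O_X[\pi]/\pi^2$), so $V(\mathcal E[\pi]) = \pi V(\mathcal E) = \pi\omega^{(p)}$, and by the Pappas--Rapoport condition $\pi\omega \subseteq \omega_1 \subseteq \omega_2$. Hence $V(\mathcal E[\pi]) \subseteq \omega_i^{(p)}$ for $i=1,2$. Once you know $\mathcal E[\pi]$ sits inside $V^{-1}\omega_i^{(p)}$, multiplication by $\pi$ kills exactly $\mathcal E[\pi]$ (rank $m$) and the rank of $\mathcal F_i$ falls out immediately as $(m+\rk\omega_i) - m$.

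\textbf{Orthogonality.} Your plan here is too vague: you gesture at ``adjunction/compatibility of Verschiebung with the polarization pairing'' and propose to lean on \cite{Bi_dual}. The paper does not outsource this; it gives a direct element-level computation. Starting from $\{Vx',Vy'\}=0$ (since $\omega_1^{(p)}$ and $\omega_2^{(p)}$ are orthogonal for the modified pairing), it writes $Vx' = \pi z$, uses $\langle z, Vy'\rangle = \langle Fz, y'\rangle$, and then the crucial identity $uFz = \pi x'$ in $\ker V / \pi\ker V$ for a unit $u$ (this is where $F$ and $V$ being inverse up to $\pi$ and a unit enters). After unwinding with the isotropy of $\ker V$, one gets $\{x,y\}=0$. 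The formulas $\pi\mathcal G^\perp = \mathcal G^{\perp'}$ you cite are not what drives this argument; what you actually need is the interplay of $F$, $V$, division by $\pi$, and the two pairings, and the paper works this out by hand rather than invoking a packaged compatibility statement.
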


\begin{proof}
The sheaf $V^{-1} \omega_1^{(p)}$ is locally free of rank $n+2 = a+b+1$, and contains $\mathcal{E} [\pi]$. This implies that $\mathcal{F}_1$ is locally free of rank $1$. One gets in a similar way the result for $\mathcal{F}_2$. \\
To prove the last part, one only needs to check that $\mathcal{F}_1$ and $\mathcal{F}_2$ are orthogonal. Let $x \in \mathcal{F}_1$ and $y \in \mathcal{F}_2$. By definition, there exist $x',y'$ such that $x = \pi x'$ and $y = \pi y'$, and $Vx' \in \omega_1^{(p)}$,  $Vy' \in \omega_2^{(p)}$. Since $\omega_1$, and $\omega_2$ are orthogonal for the modified pairing, one gets the relation $\{ Vx', Vy' \} = 0$. The element $Vx'$ is in $\mathcal{E} [\pi] ^{(p)}$; there exists then $z \in \mathcal{E}^{(p)}$ such that $Vx' = \pi z$. Now one has
$$0 = \{ Vx', Vy' \} = \{\pi z, Vy' \} = <z, Vy'> = <Fz, y'>$$
But there exists a unit $u$ such that $u Fz  = \pi x' = x$, this equality being in $\mathcal{F} / \pi \mathcal{F}$, where $\mathcal{F} = Ker V$. There exists then $a \in \mathcal{F}$ such that $Fz = u^{-1} x + \pi a$.  Thus 
$0 = <u^{-1} x + \pi a,y'> = \{u^{-1} x,y \} - < a,y> = \{u^{-1} x,y \}$. Indeed, since $a$ and $y$ belong to $\mathcal{F}$, which is totally isotropic, one must have $<a,y> = 0$. One then observes that the quantity $\{u^{-1} x , y \}$ only depends on the class of $u$ in $O_F / \pi$, and one concludes that $\{x,y\} = 0$.
\end{proof}

\begin{prop}
Let $x$ be a point of $X$. Then the condition $hasse_2 (x) = 0$ is equivalent to $\omega_1 \subseteq \mathcal{F}_2$. The condition $hasse_1 (x) =0$ is equivalent to $\omega_1 = \mathcal{F}_1$.
\end{prop}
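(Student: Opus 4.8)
The plan is to unwind both definitions and check that they describe the same condition on the line $\omega_1$. Fix a point $x$ of $X$ with residue field $k$ (a field of characteristic $p$); then $\mathcal E$ is free of rank $m$ over $k[\pi]/\pi^2$, so $\mathcal E[\pi]=\pi\mathcal E$ and every $y\in\mathcal E[\pi]$ can be written $y=\pi x'$ with $x'\in\mathcal E$ well defined modulo $\mathcal E[\pi]$. By Definition~\ref{defin:hasse}, $\hasse_i(x)$ is the restriction to $\omega_1\subseteq\mathcal E[\pi]$ of the map $\hasse\colon\mathcal E[\pi]\to(\omega/\omega_i)^{(p)}$ which sends $y=\pi x'$ to the image of $V(x')\in\omega^{(p)}$ in $(\omega/\omega_i)^{(p)}$; this is what ``the Verschiebung followed by division by $\pi$'' means (following \cite{Bi_dual} Def.~3.8).

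First I would check that this map is well defined, which is exactly where the defining relations of $X$ enter. In the special fibre one has $\pi_1=\pi_2=0$ in $k$, so the conditions $([\pi]-\pi_2)\omega\subseteq\omega_1$ and $([\pi]-\pi_1)\omega\subseteq\omega_2$ of Definition~\ref{defin:PRmodel} become $\pi\omega\subseteq\omega_1$ and $\pi\omega\subseteq\omega_2$. Since the ambiguity in $x'$ lies in $\mathcal E[\pi]=\pi\mathcal E$ and $V$ is $\pi$-compatible, replacing $x'$ changes $V(x')$ by an element of $V(\pi\mathcal E)=\pi V(\mathcal E)\subseteq\pi\omega^{(p)}\subseteq\omega_i^{(p)}$; hence $V(x')\bmod\omega_i^{(p)}$ depends only on $y$, simultaneously for $i=1$ and $i=2$.

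The translation is then immediate. For $y\in\mathcal E[\pi]$, write $y=\pi x'$; by the previous paragraph the condition ``$V(x')\in\omega_i^{(p)}$'' is independent of the chosen preimage $x'$, and it holds precisely when $\hasse(y)=0$ in $(\omega/\omega_i)^{(p)}$. On the other hand $y\in\pi V^{-1}\omega_i^{(p)}=\mathcal F_i$ holds iff $y=\pi z$ for some $z$ with $V(z)\in\omega_i^{(p)}$, i.e. again iff $V(x')\in\omega_i^{(p)}$. Thus $\hasse(y)=0\iff y\in\mathcal F_i$, and letting $y$ run over $\omega_1$ gives $\hasse_i(x)=0\iff\omega_1\subseteq\mathcal F_i$. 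This is the assertion for $i=2$. For $i=1$ we are in the case $(a,b)=(1,n)$, so $\omega_1$ and $\mathcal F_1$ are both line bundles (the preceding proposition gives $\rk\mathcal F_1=1$), and the inclusion $\omega_1\subseteq\mathcal F_1$ is forced to be an equality, giving $\hasse_1(x)=0\iff\omega_1=\mathcal F_1$.

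The only delicate point is the bookkeeping around the definition of $\hasse$: making precise that ``division by $\pi$'' means choosing a $\pi$-preimage, and that this operation is legitimate modulo $\omega_i^{(p)}$ exactly because $\pi\omega\subseteq\omega_i$, and matching the normalisation of $\hasse_i$ with that of \cite{Bi_dual}. Once this is settled the chain of equivalences is purely formal; and although the computation is phrased at a single point, it applies verbatim at every point of $X$, which is all that is claimed.
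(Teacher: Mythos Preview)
Your proof is correct. The paper actually states this proposition without proof, leaving it as an immediate consequence of the definitions; your argument is exactly the straightforward unwinding that the authors presumably had in mind. The only minor remark is about the phrase ``Verschiebung and division by $\pi$'': as you note, one must check that both possible orderings give the same well-defined map, and you handle this correctly by observing that the ambiguity in the $\pi$-preimage $x'$ lies in $\mathcal E[\pi]=\pi\mathcal E$, whose image under $V$ is $\pi\omega^{(p)}\subseteq\omega_i^{(p)}$.
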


\subsection{Stratification when $n >1$}

First, we remark that $R_1$ and $P_1$ are empty if $n \leq 2$. 

\begin{prop}
Assume that $n \leq 2$. Then $R_1$ and $P_1$ are empty.
\end{prop}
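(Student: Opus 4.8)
The plan is to work locally near a point $x$ of $R_1$ or $P_1$ and derive a numerical contradiction from the defining conditions. Recall from the previous subsection that at a point $x$ of either stratum we have $b(x) = 0$ or $m(x) \neq 0$ together with $\hasse_1(x) \neq 0$ and $\hasse_2(x) = 0$; in particular, by the last proposition of the previous subsection, $\hasse_2(x) = 0$ is equivalent to $\omega_1 \subseteq \mathcal F_2$ and $\hasse_1(x) \neq 0$ is equivalent to $\omega_1 \neq \mathcal F_1$. So I would first translate the stratum conditions into statements about the relative position of the four lines/hyperplanes $\omega_1$, $\omega_2$, $\mathcal F_1$, $\mathcal F_2$ inside $\mathcal E[\pi]$, a space of dimension $a+b = n+1$, equipped with the symmetric modified pairing $\{,\}$ for which $\omega_2 = \omega_1^{\perp'}$ and $\mathcal F_2 = \mathcal F_1^{\perp'}$.

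Next, the key observation is that $\omega_1$ and $\mathcal F_1$ are lines (rank $1$) and $\omega_2$, $\mathcal F_2$ are hyperplanes (rank $n$). The condition $\omega_1 \subseteq \mathcal F_2 = \mathcal F_1^{\perp'}$ says $\{ \omega_1, \mathcal F_1\} = 0$. On $R_1$ we also have $m(x) \neq 0$, which means $\pi \cdot \omega \neq 0$, i.e.\ $\pi\omega$ is a line; and on $P_1$ we have $m(x) = 0$, i.e.\ $\pi\omega = 0$, equivalently $\omega = \mathcal E[\pi]$. The plan is to use the constraint that $\mathcal F_1 \neq \omega_1$ (from $\hasse_1(x) \neq 0$) together with $\{\omega_1,\mathcal F_1\} = 0$: this forces $\mathcal F_1$ to be a line distinct from $\omega_1$ and isotropic-paired against it. When $n$ is small the ambient space $\mathcal E[\pi]$ is too small to accommodate simultaneously all the required incidences — in particular for $n \leq 2$ we have $\dim \mathcal E[\pi] \leq 3$, and the constraints coming from $\omega_1 \subseteq \mathcal F_2$, the shape of the pairing, and the position of $\omega_1$ relative to $\pi\omega$ (for $R_1$) or the fact $\omega = \mathcal E[\pi]$ (for $P_1$) become incompatible. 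Concretely I would pick an adapted basis as in the proof of the smoothness proposition, write the $2\times 2$ or $3\times 3$ Gram matrix of $\{,\}$, and check that no configuration realizes $b(x)=0$ (resp.\ $m(x)\neq 0$), $\hasse_1(x)\neq 0$, $\hasse_2(x)=0$ at once.

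Alternatively — and this is probably cleaner — I would argue dimension-theoretically using the results already proved: $R_1 \subseteq R = X_{1,1}$ and $P_1 \subseteq P = X_{0,1}$, and these are the strata with $(a,b) = (1,n)$. One can compute the locus inside the relevant Grassmannian (a projective space $\mathbb P^n$ or a flag) cut out by the conditions $\hasse_1 \neq 0$, $\hasse_2 = 0$, and compare dimensions: the vanishing locus of $\hasse_2$ has the expected codimension, while the extra incidence $\mathcal F_1 \neq \omega_1$ combined with $\omega_1 \subseteq \mathcal F_2$ already pins down $\mathcal F_1$ once $\omega_1$ is fixed (since $\mathcal F_1$ depends only on the fixed $p$-divisible group up to the choice of $\omega_1$ and $\omega$), so for $n \leq 2$ one runs out of room. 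I expect the main obstacle to be bookkeeping the precise relationship between $\mathcal F_1 = \pi V^{-1}\omega_1^{(p)}$ and $\omega_1$: one must carefully unwind that $\mathcal F_1$ is \emph{not} a free parameter but is determined by $V$ and $\omega_1$, so that the conditions $\hasse_1(x) \neq 0$ and $\hasse_2(x) = 0$ really do over-determine the configuration when $n$ is $1$ or $2$. Once that dependence is made explicit, the emptiness is a short case check on $n = 1$ and $n = 2$ separately.
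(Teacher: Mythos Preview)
Your first approach sets up the right framework---working with the configuration of $\omega_1,\omega_2,\mathcal F_1,\mathcal F_2$ in the $(n+1)$-dimensional space $\mathcal E[\pi]$ with its symmetric pairing---and this is exactly what the paper does. But you are missing the two decisive observations, and your plan to split into $R_1$ (using $m\neq 0$, i.e.\ $\pi\omega\neq 0$) versus $P_1$ (using $\omega=\mathcal E[\pi]$) is a red herring: the paper never uses either of those conditions.

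The unifying condition is that on \emph{both} $R_1$ and $P_1$ one has $b(x)=0$, hence $\omega_1\subseteq\omega_2$ (for $R_1$ this follows from $m\neq 0$ together with $bm=0$). For $n=1$ this already gives $\omega_1=\omega_2$, hence $\mathcal F_1=\mathcal F_2$ by definition, and then $\omega_1\subseteq\mathcal F_2=\mathcal F_1$ forces $\hasse_1(x)=0$, contradiction. For $n=2$ the crucial step---which your proposal does not identify---is that $\omega_1\subseteq\omega_2$ implies $\mathcal F_1\subseteq\mathcal F_2$ (apply $\pi V^{-1}(\cdot)^{(p)}$). Together with $\omega_1\subseteq\mathcal F_2$ (whence $\mathcal F_1\subseteq\omega_2$ by taking orthogonals) this makes $\omega_1$ and $\mathcal F_1$ two \emph{distinct isotropic} lines, mutually orthogonal, both lying in the $2$-dimensional space $\omega_2$. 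So the pairing vanishes identically on $\omega_2$; but $\omega_2=\omega_1^{\perp'}$ has radical $\omega_2\cap\omega_2^{\perp'}=\omega_2\cap\omega_1=\omega_1$ of dimension $1$, contradiction.

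Your dimension-counting alternative is too vague to work as stated; in particular the statement that ``$\mathcal F_1$ is not a free parameter'' is true but does not by itself yield emptiness---one really needs the isotropy of $\mathcal F_1$ deduced from $\omega_1\subseteq\omega_2$.
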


\begin{proof}
Assume that $x$ is a point in $R_1$ or $P_1$. This implies that $\omega_1 \subseteq \mathcal{F}_2$. If $n=1$, since $b(x) =0$, one must have $\omega_1 = \omega_2$, hence $\mathcal{F}_1 = \mathcal{F}_2$ and then $hasse_1 (x) =0$. This is a contradiction. \\
Assume now that $n=2$. Taking the orthogonal of the inclusion $\omega_1 \subseteq \mathcal{F}_2$ in $\mathcal{E} [\pi]$, one has $\mathcal{F}_1 \subset \omega_2$. As $b = 0$ we have $\omega_1 \subset \omega_2$, thus $\omega_1^{(p)} \subset \omega_2^{(p)}$ and thus $\mathcal F_1 \subset \mathcal F_2$. In particular $\omega_1$ and $\mathcal{F}_1$ are distincts isotropic lines, and $\omega_2$ is the orthogonal of $\omega_1$, thus one can see that the modified pairing induced on $\omega_2$ is zero, which is not possible.
\end{proof}

Let us now state the principal result on the stratification of the variety. We will need the following remark.

\begin{rema}
\label{remaHdgfrob}
Let $S_0 = \Spec(R)$ be a characteristic $p$ scheme, and $T = \Spec(S)$, with $R = S/I$ for some ideal $I$ a thickening of $S_0$, and assume $T$ is of characteristic $p$ again. Let 
$G$ be a p-divisible group over $S_0$ and assume that $I^2 = 0$ in $T$ and denote $\mathcal E$ its crystal on the crystalline site $S_0/\Spec(\ZZ_p)$. Then by Grothendieck-Messing,
 lifting $G$ to $T$ is the same as lifting is Hodge filtration $\omega_G$ to $\mathcal E_T$. Assume $\widetilde \omega_G \subset \mathcal E_T$ is such a lift, then as $I^2 = 0$ we 
 claim that $\widetilde \omega_G^{(p)}$ doesn't depend on the lift. Indeed, let $w_1,w_2 \in \mathcal E_T$ which both lift $w \in \mathcal E_{S_0}$ and let $\underline e$ be a basis of 
 $\mathcal E_{T}$ as $S$-module. Then $w_2 = w_1 + M\cdot \underline e$ for some $M \in M_{2h}(I)$. Then 
 $w_2 \otimes 1 = w_1\otimes 1 + (M \underline e)\otimes 1 = w_1\otimes 1 + \underline e \otimes M^\sigma$. But if $i \in I$, and $\sigma = \sigma_T$ is the Frobenius of $T$ 
 (which lifts the one of $S_0$) then $\sigma(i) = i^p \equiv 0$ in $S$, thus $w_2 = w_1$. In particular, in the previous situation as both $F,V$ are maps on the crystal $\mathcal E$, 
 we see that the lifts of $\mathcal F_1,\mathcal F_2$ doesn't depend on the lift of $\omega$.
\end{rema}

\begin{theor}
Assume that $n \geq 2$. The strata $X^{ord}$ and $B_0$ are open. The strata $P_2$, $B_2$ are closed. Moreover
$$\overline{X^{ord}} = X^{ord} \cup_{i=1}^2 R_i \cup_{i=0}^2 P_i \qquad  \overline{R_2} = R_2 \cup P_2$$

$$\overline{B_0} = \cup_{i=0}^2 B_i \cup_{i = 0}^2 P_i \qquad \overline{B_1} = B_1 \cup P_1 \cup P_2 \qquad \overline{P_0} = \cup_{i=0}^2 P_i $$
If $n \geq 3$, one has moreover
$$\overline{R_1} = \cup_{i=1}^2 R_i \cup_{i=1}^2 P_i \qquad \overline{P_1} = P_1 \cup P_2$$
\end{theor}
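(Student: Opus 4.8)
The plan is to establish the stratification by identifying, for each stratum, a concrete local model at a point: a Grassmannian (or product of Grassmannians) parametrizing lifts of the relevant filtrations, together with the loci where the Hasse invariants and the sections $b,m$ vanish or not. The key structural inputs are already in hand: the deformation-theoretic dictionary (Grothendieck–Messing plus the remark on $\widetilde{\mathcal E}$ over $k[[t]]$), the fact that over a square-zero thickening of characteristic $p$ the $p$-th power filtration $\widetilde\omega^{(p)}$, and hence the lifts of $\mathcal F_1,\mathcal F_2$, do not depend on the chosen lift of $\omega$ (Remark \ref{remaHdgfrob}), and the characterizations $hasse_2(x)=0 \Leftrightarrow \omega_1\subseteq\mathcal F_2$ and $hasse_1(x)=0 \Leftrightarrow \omega_1=\mathcal F_1$. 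I would organize the proof around the coarse stratification $X = R \sqcup P \sqcup B$ (recorded by $m,b$), which we already control: $R=X_{1,1}$ and $B=X_{0,0}$ are open with $P=X_{0,1}$ their common boundary, and then refine inside each of these three pieces according to the vanishing of $hasse_1,hasse_2$, using the proposition listing the constraints ($b(x)=0$ forces $hasse_1(x)=0 \Rightarrow hasse_2(x)=0$; $b(x)\neq 0$ forbids $hasse_1(x)=hasse_2(x)=0$).

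First I would prove the openness and closedness statements. Openness of $X^{ord}$ and $B_0$ is immediate: each is an intersection of the non-vanishing locus of a section with an open stratum ($R$ or $B$). For closedness of $B_2$ and $P_2$ I would argue that $B_2 \sqcup P_2$ is the locus $\{x : m(x)\neq\text{(full rank condition fails)}\dots\}$ — more precisely, $B_2\cup P_2\cup R_2$ is the closed locus $\{hasse_1=0\}\cap\overline{R\cup B}$, and then intersect with the closed conditions coming from $m$; one checks directly from the defining conditions that no generization can leave $B_2$ except into $R_2$, and no generization can leave $P_2$ at all, by semicontinuity of $b,m$ together with the implication from the proposition. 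The cleanest route is to compute, at a point of $P_2$, the tangent/deformation space via the local model and see that every first-order deformation stays in $P_2$; and similarly to compute at a point of $B_2$ that the only way to deform out is by making $m$ become invertible, i.e. into $R_2$.

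Then I would treat the closure relations one at a time by producing explicit generizations, exactly as in the proof of Proposition \ref{propclosurestrata}: starting from a point $x$ in the smaller stratum (say $x\in P_2$, the deepest one, which by the constraints lies in the closure of everything), choose an adapted basis of $\mathcal E[\pi]$ in which the modified pairing, $\omega_1$, $\pi\omega$, $\mathcal F_1$, $\mathcal F_2$ all have standard matrices, write a one-parameter family of lifts $\widetilde\omega_1\subset\widetilde\omega\subset\widetilde{\mathcal E}$ over $k[[t]]$ with parameters, impose the isotropy equations, and read off from the generic point which stratum it lands in by computing the ranks of $m$, $b$, and the two Hasse maps (the latter being computable because $\widetilde{\mathcal F}_i$ is determined by $\widetilde\omega$ as in Remark \ref{remaHdgfrob}). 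For each target stratum appearing on the right-hand side of a claimed equality I would exhibit such a family; for each stratum \emph{not} appearing I would invoke the semicontinuity already used (ranks of $m,b$) together with the Hasse-vanishing constraints to rule it out. The case distinctions $n=1,2$ versus $n\geq 3$ enter precisely because $R_1,P_1$ are empty for $n\leq 2$ (already proved) and because the dimension of the relevant Grassmannian $\Gr_{a-h,a+b-2h}$ — here with $a=1$, so the ambient spaces have dimension $n-1$ or $n$ — is large enough to accommodate the extra vanishing only when $n\geq 3$; concretely, $\overline{R_1}$ and $\overline{P_1}$ require a line $\omega_1$ inside $\mathcal F_2$ but distinct from $\mathcal F_1$, with the induced modified pairing on a suitable quotient nondegenerate, which needs enough room.

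The main obstacle I expect is \textbf{the surprising behavior of $\overline{B_1}$} and, relatedly, keeping the Hasse-invariant bookkeeping consistent with the polarization constraints. Writing down the local model for $B_1$ (where $b\neq 0$, so $\omega=\omega_1\oplus\omega_2$, and $hasse_2=0$, so $\omega_1\subseteq\mathcal F_2$) and checking that its closure picks up $P_1$ and $P_2$ but \emph{not} $B_2$, $B_0$, or $R$-strata requires carefully tracking how $\mathcal F_1,\mathcal F_2$ deform — these are not free parameters but are pinned by $\widetilde\omega$ through Verschiebung — and simultaneously how the modified pairing restricts to the relevant subquotients; the parity/nondegeneracy argument used in Proposition \ref{prop:6.5}-style computations (``the modified pairing induced on $\omega_2$ is zero, which is not possible'') is the prototype and must be redone in each boundary case. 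I would do $\overline{P_0}$ and $\overline{R_2}$ first as warm-ups, then $\overline{B_0}$ and $\overline{X^{ord}}$, and finally $\overline{B_1}$ and (for $n\geq 3$) $\overline{R_1}$, $\overline{P_1}$.
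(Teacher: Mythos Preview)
Your overall architecture — coarse stratification by $(m,b)$, then refine by the Hasse invariants, produce explicit $k[[t]]$-families for each claimed inclusion and rule out the others by semicontinuity — matches the paper's. But there is one genuine gap and one point of confusion.

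The confusion is in your closedness arguments. You write that the cleanest route for $P_2$ is to ``compute, at a point of $P_2$, the tangent/deformation space via the local model and see that every first-order deformation stays in $P_2$''. That would show $P_2$ is \emph{open}, not closed, and it is in any case false: $P_2$ is the deepest stratum and deforms into almost everything. Closedness of $P_2$ is immediate from semicontinuity ($m=0$, $b=0$, $hasse_1=0$ are each closed). The same inversion appears in your $B_2$ discussion: you say ``the only way to deform out is by making $m$ become invertible, i.e. into $R_2$'', but on $B$ one has $b\neq 0$ hence $m=0$ identically, and in any case deformation (generization) is not what controls closedness.

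The real gap is the one nontrivial exclusion: showing $P_2 \not\subset \overline{B_2}$. Semicontinuity does \emph{not} handle this, since passing from $B_2$ to $P_2$ only requires $b$ to degenerate to $0$ while $hasse_1$ stays $0$. The paper's argument is of a different nature from anything in your sketch. Suppose $x\in P_2(k)$ deforms over $k[[t]]$ to a point of $B_2$. Throughout the family $hasse_1=0$, so $\omega_1 = \mathcal F_1$; let $e_1$ span $\omega_1$ and set $u := \{e_1,e_1\}$. Using the adjunction $\{F_\pi x,y\} = \{x,V_\pi y\}$ between the divided Frobenius and divided Verschiebung on $\mathcal E[\pi]$, together with $V_\pi e_1 = \lambda e_1$ (some unit $\lambda$), one obtains an equation of the shape $u = \lambda_0\, u^p$ with $\lambda_0\in k^\times$. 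But $u$ vanishes at the closed point (there $\omega_1\subseteq\omega_2$, so $e_1$ is isotropic) and is nonzero generically (in $B_2$ one has $b\neq 0$, hence $\omega_1\not\subset\omega_2$, i.e.\ $\{e_1,e_1\}\neq 0$). A nonzero element of $t\,k[[t]]$ cannot satisfy $u=\lambda_0 u^p$. This Frobenius-fixed-point obstruction is the crux, and nothing in your local-model/Grassmannian picture produces it; you need to bring in the $F_\pi$–$V_\pi$ duality explicitly. Once you have this, the remaining closure relations go through essentially as you outline.
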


\begin{proof}
The fact that $X^{ord}$ and $B_0$ are open is clear, as is the closeness of $P_2$. Let us prove the closure relations by looking at where we can specialize (for $B_2$) or deform points of $X$.\begin{itemize}
\item If $x$ is a point of $B_2$, it can only specialize to a point in $B_2$ or $P_2$. We need to show that the latter cannot happen. Assume that a point $x$ in $P_2(k)$ can be deformed to $k[[X]]$\footnote{In particular this means that we have a deformation of the Hodge filtration, and conversly a deformation of the Hodge filtration to $k[[X]]$ induces step by step by Grothendieck-Messing a deformation of the $p$-divisible group.}, such that the generization lies in $B_2$. Since $hasse_1 =0$, $\omega_1 = \mathcal{F}_1$ over $k[[X]]$. If $e_1$ is a basis of $\omega_1$, let $u = \{e_1, e_1\}$. The composition of $V$ with the division by $\pi$ defines a map $V_\pi : \mathcal{E} [\pi] \to \mathcal{E} [\pi]^{(p)}$; similarly, one has a map $F_\pi : \mathcal{E} [\pi]^{(p)} \to \mathcal{E} [\pi]$ given by the composition of the division by $\pi$ and the Frobenius. These maps are well defined because the image of $V$ is $\mathcal{E} [\pi]$. There exists a unit $u \in O_F^\times$ such that $F_\pi \circ V_\pi = u \id$. Since one has $\{F_\pi x,y\} = \{x, V_\pi y \}$, and $V_\pi e_1 = \lambda e_1$ for some unit $\lambda$, one finds the equation $u = \lambda_0 u^p$
, with $\lambda_0 \in k^\times$. One gets a contradiction, since $u$ must be non zero and divisible by $X$. \\
\item Let $x \in R_2(k)$. This implies that $\omega_1 = \mathcal{F}_1$, and $\omega_2 = \mathcal{F}_2$. One can find a basis $e_1, \dots, e_{n+1}$ of $\mathcal{E} [\pi]$ such that $\omega_1$ is spanned by $e_1$, and $\omega_2$ by $e_1, \dots, e_n$, and the modified pairing is given by the matrix
\begin{equation}\label{formpairing}\left ( \begin{array} {ccc}
0 & 0 & 1 \\
0 & J_{n-1} & 0 \\
1 & 0 & 0
\end{array} \right ), \quad \text{with } J_{n-1} = \left ( \begin{array} {ccc}
0 & 0 & 1 \\
0 & \reflectbox{$\ddots$} & 0 \\
1 & 0 & 0
\end{array} \right ).\end{equation}
One then look for a lift to $k[[T]]$, first of the Hodge filtration together with the extra data. The line $\omega_1$ can be lifted to a line spanned by a vector $\left ( \begin{array}{c}
1 \\
X \\
y 
\end{array} \right)$. The vector needs to be isotropic, hence the condition
$$2y + ^t XJ_{n-1}X =0$$
Then we will look at the corresponding deformation step by step, i.e. successively from $k[T]/(T^n) \fleche k[T]/(T^{n-1})$ which is given by a square zero ideal. At each step, 
we have a $p$-divisible group $G_n$ over $k[T]/(T^{n})$ and by remark \ref{remaHdgfrob}, the deformation to $G_{n+1}$ has a canonical lift of 
$\mathcal F_1$ and $\mathcal F_2$ which we can assume, if $\hasse_1(G_n) = \hasse_2(G_n) = 0$ given by $e_1$ and $e_1,\dots,e_n$. The condition for the generization 
to be in $R_2$ is that at each step $X=0$, $y=0$. The condition for it to be in $R_1$ is $y=0$. Since $n \geq 2$, the point can always be lifted to a point in $X^{ord}$. If 
$n \geq 3$, it can be lifted to a point in $R_1$, but as in this case the two conditions ($\omega_1$ totally isotropic and $\omega_1 \subset \mathcal F_2$) make a non smooth 
condition, let us give a more precise argument : set $\widetilde{\mathcal E} = \mathcal E \otimes_k k[[t]]$ and choose a lift of the basis such that the pairing is of the previous form, and reducing on $k[t]/(t^2)$ we have $\mathcal F_1$ given by $e_1$ and $\mathcal F_2$ by $e_1,\dots,e_n$, as before. Then set $\widetilde\omega_1$ spanned by $\left ( \begin{array}{c}
1 \\
t \\
 0_{n-1} 
\end{array} \right)$. Then clearly $\widetilde\omega_1$ is totally isotropic, and reducing modulo $t^2$ we see that $\widetilde\omega_1 \neq \mathcal F_1 \mod t^2$, thus our deformed point is not in $R_2$ anymore, but we can't assure that the $k[[t]]$-point is in $R_1$ at the moment. So assume we have lifted $\omega_1$ to $k[t]/(t^n)$ to a point in $R_1$, and we moreover assume that there is a basis of $\mathcal E \otimes_k k[t]/(t^n)$ such that $\omega_1$ is spanned by $\left ( \begin{array}{c}
1 \\
t \\
 0_{n-1} 
\end{array} \right)$. We then choose a lift of this basis to $\mathcal E \otimes_k k[t]/(t^{n+1})$ such that the pairing as the same form (\ref{formpairing}). Then we simply set again $\widetilde\omega_1$ to be spanned by $\left ( \begin{array}{c}
1 \\
t \\
 0_{n-1} 
\end{array} \right)$. Inducting the argument gives the resulting point in $R_1$. 
\item The fact that any point $x \in P_0(k)$ can be deformed to a point in $X^{ord}$ or $B_0$ follows from the previous sections. 
\item A point $x \in P_2(k)$ can be deformed to $P_0$ (and hence $B_0$, $X^{ord}$), and $P_1$ if $n \geq 3$, with exactly the same arguments as before, as we never used that $m \neq 0$. If we want to deform $x$ to $R_2$, we can lift $\omega_1$ "trivially" so that $\omega_1 \subset \omega_2 := \omega_1^{\perp'}$, and then deform $\omega/\omega_2$ so that $\pi \omega \subset \omega_1$ (by choosing elements as in Proposition \ref{propclosurestrata}). We can then deform to $R_1$ if $n \geq 3$. The point $x$ can also be deformed to $B_1$, by lifting $\omega_1$ inside $\mathcal{F}_2$, non isotropically : concretely choose the isotrivial lift to $k[[T]]$ of $\widetilde{\omega}$ of $\omega$ (here it means it is still of $p$-torsion, i.e. $\widetilde \omega = \widetilde{\mathcal E}[\pi]$), and then inductively for each $n$, there is a canonical lift of $\mathcal F_1,\mathcal F_2$ from $k[T]/(T^{n-1})$ to $k[T]/(T^n)$ by remark \ref{remaHdgfrob} if $\omega_1$ and thus $\omega_2 = \omega_1^{\perp'}$ have been deformed to $k[T]/(T^{n-1})$. We thus have deformations of $\mathcal F_1, \mathcal F_2$ to $k[T]/(T^n)$ (orthogonal to each other for the modified pairing) and we choose a deformation of $\omega_1$ still assuming $\widetilde \omega_1 \subset \mathcal F_2$. This is possible as the Grassmanian $\Gr_a(\mathcal F_2)$ is smooth. If $n$ is big enough, as the condition of being totally isotropic is a closed condition which defines a proper closed subspace of $\Gr_a(\mathcal F_2)$, there exists a deformation of $\omega_1$ which is not isotropic anymore. After this choice, any lift of $\omega_1$ will do. The corresponding deformed $p$-divisible group is in $B_1$. Note that we have already proven that we can't deform from $P_2$ to $B_2$. 
\item  A similar but easier argument shows that we can deform from $B_2$ to $B_1$, and
it is easy to see that any element $x \in B_1 (k)$ 
can be deformed to $B_0$. 
\item To finish the proof, let us remark that a point in $R_1$ can be deformed to $X^{ord}$ if $n \geq 3$, by lifting $\omega_1$ isotropically outside $\mathcal{F}_2$. 
Indeed, we have $\pi \omega = \omega_1 \subset \omega_2$ and by hypothesis $\mathcal F_1 \neq \omega_1 \subset \mathcal F_2$. In particular $\omega_2 \neq \mathcal F_2$. The divided pairing, on a basis $e_1,\dots,e_h$ such that $e_1$ generates $\omega_1$ and $e_1,\dots,e_{h-1}$ generates $\omega_2$ can be given by 
\[\left ( \begin{array} {ccc}
0 & 0 & 1 \\
0 & I_{n-1} & 0 \\
1 & 0 & 0
\end{array} \right )\]
We thus look for a lift of $\omega_1$ given by a vector $\left ( \begin{array}{c}
1 \\
X \\
y 
\end{array} \right)$ with $y,X$ with coefficients in $tk[[t]]$. This lift is totally isotropic if $2y + {^t}XX = 0$. Let us prove that we can choose it away from $\mathcal F_2$. 
As mod $t$, $\omega_1 \subset \mathcal F_2 \neq \omega_2$, we have $e_1 \in \mathcal F_2$, there exist $e_i \notin \mathcal F_2$ and as $n \geq 2$, there is a non zero vector of the form
\[ v = \left ( \begin{array}{c}
0 \\
B \\
0 
\end{array} \right) \in \mathcal F_2.\] 
Thus if we set $\widetilde\omega_1$ generated by
\[ v = \left ( \begin{array}{c}
1 \\
tB + ta \delta_i \\
0 
\end{array} \right) \notin \mathcal F_2.\] 
for a non zero $a$, the condition of being totally isotropic is given by $t^2(\sum_{j} b_j^2 + 2b_ia + a^2)=0$. In particular if $\sum_j b_j^2$ is non zero or if $b_i \neq 0$ we can find such a non zero $a$. So assume that $\sum_j b_j^2 = 0$ but $b_i = 0$. As $v$ is non zero, there is $j$ such that $b_j \neq 0$. If $e_j \notin \mathcal F_2$ then the previous argument applies. Otherwise $e_j \in \mathcal F_2$, and thus $w = v + c e_j \in \mathcal F_2$. But if we calculate its norm for the divided pairing, we have $\sum_i b_i^2 + 2cb_j + c^2 = 2cb_j + c^2$. But we can find $c$ such that this is non zero, and then reapply the previous argument with $w$ instead of $v$. 
 
\item Finally, if $n \geq 3$, one checks that a point in $P_1$ can be deformed to $P_0$ (and then $X^{ord}$, $B_0$) by the exact same calculation. We can also deform from $P_1$ to $B_1$ :  mod $t$ we have $\omega_1 \subset \omega_2 \neq \mathcal F_2$, thus up to choosing a basis as before we can set $\widetilde w_1$ mod $t^2$ generated by
\[  \left ( \begin{array}{c}
1 \\
tB \\
t 
\end{array} \right) \in \widetilde{\mathcal F_2},\] where
\[ v = \left ( \begin{array}{c}
0 \\
B \\
1 
\end{array} \right) \in \mathcal F_2.\]  Then clearly $\widetilde \omega_1$ is not isotropic. Then assume that we have lifted $\omega_1$ to $k[t]/(t^n)$, this gives a lift of $\mathcal F_2$ to 
$k[t]/(t^{n+1})$ and we choose any lift of $\omega_1$ inside this. By induction, and Grothendieck-Messing, we get a point in $B_1$. 
We can also deform from $P_1$ to $R_1$ : assume that we have lifted $\omega_1$ to $k[t]/(t^n)$, inside $\mathcal F_2$, which has a canonical lift mod $t^{n+1}$. Then we want to deform $\omega_1$ isotropically while staying in $\mathcal F_2$. But as $\omega_1^\perp \cap \mathcal F_2$ is non trivial in special fiber, we can indeed find a lift of $\omega_1 \subset \mathcal F_2$ at each step which remains isotropic. 

\end{itemize}
\end{proof}

\subsection{Stratification when $n=1$}

We now suppose that $n=1$. In this case $P_1$ and $R_1$ are empty. The situation is the following.

\begin{theor}
The strata $X^{ord}$, $R_2$ and $B_0$ are open. The strata $P_0$, $P_2$ and $B_i$ ($i= 1,2$) are closed. Moreover
$$\overline{X^{ord}} = X^{ord} \cup P_0 \qquad \overline{R_2} = R_2 \cup P_2 \qquad \overline{B_0} = \cup_{i=0}^2 B_i \cup P_0 \cup P_2$$
\end{theor}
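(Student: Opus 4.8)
I would run, stratum by stratum, the same deformation/obstruction analysis as in the case $n\geq 2$, exploiting the drastic simplifications for $(a,b)=(1,1)$. Here the three coarse strata are $R=X_{1,1}=\{m\neq 0\}$, $B=X_{0,0}=\{b\neq 0\}$ and $P=X_{0,1}=\{m=b=0\}$, with $R,B$ open and smooth, $P$ closed, $\overline R=R\cup P$, $\overline B=B\cup P$, $X=\overline R\cup\overline B$ and $\overline R\cap\overline B=P$ by Proposition~\ref{propclosurestrata}; moreover $R_1$ and $P_1$ are empty, $\mathcal E[\pi]$ has rank $2$, and $P$ is finite since $\dim X_{0,1}=ab-\tfrac{(\ell-h)(\ell-h+1)}{2}=0$ and $X_{0,1}$ is smooth. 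By the Proposition expressing $\hasse_i$ through $\mathcal F_i$ one has $\hasse_1=0\Leftrightarrow\omega_1=\mathcal F_1$ and $\hasse_2=0\Leftrightarrow\omega_1=\mathcal F_1^{\perp'}$; since $\omega_1=\omega_2$ on $\overline R$ (so there $\mathcal F_1=\mathcal F_2=\mathcal F_1^{\perp'}$) while $\hasse_1,\hasse_2$ never vanish simultaneously on $\overline B$, the seven strata become $X^{ord}=R\cap\{\omega_1\neq\mathcal F_1\}$, $R_2=R\cap\{\omega_1=\mathcal F_1\}$, $P_0=P\cap\{\omega_1\neq\mathcal F_1\}$, $P_2=P\cap\{\omega_1=\mathcal F_1\}$, $B_0=B\cap\{\omega_1\neq\mathcal F_1,\ \omega_1\neq\mathcal F_1^{\perp'}\}$, $B_1=B\cap\{\omega_1=\mathcal F_1^{\perp'}\}$, $B_2=B\cap\{\omega_1=\mathcal F_1\}$. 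In particular $X^{ord}$ and $B_0$ are open (non-vanishing loci of sections), $P_2=P\cap\{\hasse_1=0\}$ is closed, and as $P$ is finite and closed, $P_0$ is closed too.

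\textbf{The closure relations on $\overline R$.} This is the key point and explains why the poset of strata is so much coarser than for $n\geq 2$. On $\overline R$ both $\omega_1$ and $\mathcal F_1$ are lines in the rank-$2$ bundle $\mathcal E[\pi]$ that are isotropic for the perfect symmetric modified pairing $\{,\}$ (for $\omega_1$ because $\omega_1=\omega_2=\omega_1^{\perp'}$, and likewise $\mathcal F_1=\mathcal F_2=\mathcal F_1^{\perp'}$). As $p\neq 2$ the subscheme of $\mathbb P(\mathcal E[\pi])$ of $\{,\}$-isotropic lines is finite étale of degree $2$ over $\overline R$, and $\omega_1,\mathcal F_1$ are two of its sections; since the coincidence locus of two sections of an étale cover is open and closed, $R_2\cup P_2=\{\omega_1=\mathcal F_1\}$ and its complement $X^{ord}\cup P_0$ are both clopen in $\overline R$, hence closed in $X$. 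This gives $\overline{X^{ord}}\subseteq X^{ord}\cup P_0$ and $\overline{R_2}\subseteq R_2\cup P_2$, and, since $X\setminus R_2=(X^{ord}\cup P_0)\cup\overline B$ is closed, that $R_2$ is open. It is exactly here that the deformation used for $n\geq 2$ to put $R_2$ into $\overline{X^{ord}}$—lifting $\omega_1$ isotropically off $\mathcal F_2$, controlled by the quadric $2y+{}^tXJ_{n-1}X=0$—collapses, as for $n=1$ that quadric degenerates to the point $y=0$. For the reverse inclusions I would use the local structure of $X$ near a point $y$ of $P$: the Claim in the proof of Proposition~\ref{prop:flat0}, specialised to $a=1$, identifies the local ring of $X$ at $y$ (through the local model diagram) with that of $k[Z,Y]/(ZY)$ at the origin, so that near $y$ the scheme $X$ is the union of a smooth branch inside $\overline R$ and a smooth branch inside $\overline B$; the $\overline R$-branch through $y$, being connected and meeting the clopen piece $X^{ord}\cup P_0$ (resp. $R_2\cup P_2$) of $\overline R$ when $y\in P_0$ (resp. $y\in P_2$) and lying in $R$ away from $y$ (as $P$ is finite), exhibits $y$ as a limit of $X^{ord}$-points (resp. $R_2$-points). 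Hence $\overline{X^{ord}}=X^{ord}\cup P_0$ and $\overline{R_2}=R_2\cup P_2$.

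\textbf{The statements about $B$.} The closure of $B_0$ is contained in $\overline B=B\cup P$. Near a point of $P_0$ the open conditions $\hasse_1\neq 0,\hasse_2\neq 0$ hold, so nearby $B$-points lie in $B_0$ and $P_0\subseteq\overline{B_0}$; a point of $B_1$ (resp. $B_2$) is deformed into $B_0$ by perturbing the line $\omega_1\subseteq\mathcal E[\pi]=\omega$ inside $B$, the remaining equality $\omega_1=\mathcal F_1^{\perp'}$ (resp. $\omega_1=\mathcal F_1$) being a closed condition that a generic perturbation breaks while the complementary open condition is preserved; this gives $B_1,B_2\subseteq\overline{B_0}$ and therefore $B\subseteq\overline{B_0}$. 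Now $\overline{B_i}\cap B=B_i$ for $i=1,2$ (the closure in $X$ of a set closed in the subspace $B$ meets $B$ in itself), and $\overline{B_i}\cap P\subseteq(\{\hasse_1=0\}\cup\{\hasse_2=0\})\cap P=P_2$; so $B_1,B_2$ are closed provided $\overline{B_1}\cap P_2=\overline{B_2}\cap P_2=\emptyset$, which then also yields $P_2\subseteq\overline B=\overline{B_0}\cup\overline{B_1}\cup\overline{B_2}$, i.e. $P_2\subseteq\overline{B_0}$, and finally $\overline{B_0}=B\cup P=B_0\cup B_1\cup B_2\cup P_0\cup P_2$. The emptiness $\overline{B_2}\cap P_2=\emptyset$ is the Frobenius obstruction already proven for $n\geq 2$: along a curve joining a $P_2$-point to a $B_2$-point one has $\omega_1=\mathcal F_1$ over the whole DVR, and for a generator $e_1$ of $\omega_1$ the element $\{e_1,e_1\}$ vanishes at the $P_2$-end ($\omega_1=\omega_1^{\perp'}$) but not at the $B_2$-end ($\omega_1\neq\omega_1^{\perp'}=\omega_2$), contradicting the relation $\{e_1,e_1\}=\lambda_0\{e_1,e_1\}^p$ with $\lambda_0$ a unit extracted from $F_\pi\circ V_\pi=(\text{unit})\cdot\mathrm{id}$; for $\overline{B_1}\cap P_2=\emptyset$ one argues identically, observing that such a curve has $\hasse_2\equiv 0$, hence $\omega_1=\mathcal F_2$ over the DVR, and running the same Frobenius-semilinear computation with $\mathcal F_2=\pi V^{-1}\omega_2^{(p)}$ in place of $\mathcal F_1$.

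\textbf{Main obstacle.} The étale-double-cover observation on $\overline R$ is the one genuinely new conceptual input and is clean; the deformation computations and the two Frobenius obstructions are exactly as in Proposition~\ref{propclosurestrata} and the $n\geq 2$ theorem, with all auxiliary matrices of size $\leq 1$. The step that is not purely formal is the assertion that a point of $B_i$ ($i=1,2$) can be deformed inside $B$ to a point of $B_0$: this requires that $\{\omega_1=\mathcal F_1\}$ and $\{\omega_1=\mathcal F_1^{\perp'}\}$ contain no component of $B$—equivalently that neither partial Hasse invariant vanishes identically on a component of $\overline B$, equivalently that $B_0$ is dense in $\overline B$. Because $\mathcal F_1=\pi V^{-1}\omega_1^{(p)}$ varies with the $p$-power Frobenius twist of $\omega_1$, this non-degeneracy should come from the position of $\mathcal F_1$ relative to $\omega_1$ at the generic point of each component of $\overline B$ (or from irreducibility of $\overline{X_{0,0}}$, in the spirit of Kramer's analysis for $a=1$, together with $B_0\neq\emptyset$); pinning it down is the place where some input beyond the bookkeeping is needed, and it is precisely this non-degeneracy that keeps the stratification from collapsing still further.
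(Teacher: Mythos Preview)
Your overall strategy is sound and close to the paper's. The étale double-cover observation on $\overline R$ is a clean reformulation of what the paper phrases pointwise (``the space of totally isotropic lines in $\mathcal E[\pi]$ is zero-dimensional and reduced, so any lift $\widetilde\omega_1$ must equal the canonical lift of $\mathcal F_1$''), and your use of the local model near $P$ for the reverse inclusions is a valid alternative to the paper's direct deformations from Proposition~\ref{propclosurestrata}. Two issues deserve attention.

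\emph{The $B_1$ obstruction is not as you state it.} The Frobenius argument for $\overline{B_2}\cap P_2=\emptyset$ rests on $\omega_1=\mathcal F_1=\pi V^{-1}\omega_1^{(p)}$, which yields $V_\pi e_1=\lambda e_1^{(p)}$ and hence $u=\lambda_0 u^p$ for $u=\{e_1,e_1\}$. On $B_1$ one has instead $\omega_1=\mathcal F_2=\pi V^{-1}\omega_2^{(p)}$, which relates $\omega_1$ to $\omega_2^{(p)}$, not $\omega_1^{(p)}$; there is no one-step relation of the form $u=\lambda_0 u^p$. You can repair this by a two-step argument (taking orthogonals gives $\omega_2=\mathcal F_1$, and composing yields $u=\lambda_0' u^{p^2}$, which still forces $v(u)\in\{0,\infty\}$), but the paper's argument is different and handles $B_1,B_2$ uniformly: working inductively mod $t^n$, one has $\omega_1=\omega_2$ at the start, hence $\mathcal F_1=\mathcal F_2$ mod $t^{n+1}$ by Remark~\ref{remaHdgfrob}; then \emph{either} condition $\hasse_1=0$ or $\hasse_2=0$ forces $\widetilde\omega_1$ to coincide with this common rank-one sheaf, whence $\widetilde\omega_2=\widetilde\omega_1^{\perp'}=\mathcal F_1^{\perp'}=\mathcal F_2=\widetilde\omega_1$ and the deformation remains in $P_2$.

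\emph{Your ``main obstacle'' dissolves.} To deform $x\in B_i$ into $B_0$, fix the abelian variety at $x$ and let $\omega_1$ run over $\mathbb P(\mathcal E[\pi])\cong\mathbb P^1$, which lies inside $B\cup P$. On this $\mathbb P^1$ the isomorphism $\hasse:\mathcal E[\pi]\to\mathcal E[\pi]^{(p)}$ is constant (it depends only on the $p$-divisible group), and $\mathcal F_1=\hasse^{-1}(\omega_1^{(p)})$; the locus $\{\omega_1=\mathcal F_1\}$ is then cut out by a Frobenius-twisted linear condition on $\mathbb P^1$ and is finite, and likewise for $\{\omega_1=\mathcal F_2\}$. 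Hence $B_0$ is dense in this $\mathbb P^1$, which is what the paper means by ``easy to see''. No irreducibility input about $\overline{X_{0,0}}$ is needed.
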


\begin{proof}
It is clear that $X^{ord}$ and $B_0$ are open. As previously, any point of $P_0$ can be deformed to $X^{ord}$ or $B_0$. It is easy to see that any point of $B_i$ ($i=1,2$) can be deformed to $B_0$. \\
Let us prove that $R_2$ is open. Let $x \in R_2 (k)$, and let us investigate the possible lifts of $x$ to a ring $R$. Over this ring, the space $\mathcal{F}_1$ lifts canonically. By assumption, the space $\omega_1$ is equal to its $\mathcal F_1$ over $k$. Since any lift of $\omega_1$ must be isotropic, and $\mathcal E[\pi]$ is a 2-dimensional space with a perfect pairing, we see that the space of totally isotropic lines in it is zero dimensional and reduced, thus one must have an equality $\widetilde\omega_1 = \mathcal F_1$. It is then not possible to lift $x$ to a point in $X^{ord}$. \\
The same arguments show that a point in $P_2$ cannot be deformed into $P_0$ or $X^{ord}$. Similarly, if $x \in P_2$ is deformed over $k[[t]]$ in $B_1$ or $B_2$, for each $n$, modulo $t^n$ this implies that we have canonical lifts $\mathcal F_1,\mathcal F_2$ modulo $T^{n+1}$. If $\omega_{1} = \omega_2$ mod $t^n$, then $\mathcal F_1 = \mathcal F_2$, and if we deform in $B_1$ or $B_2$ (or any point such that $\hasse_2 = 0$) we must have $\widetilde \omega_1 \subset \mathcal F_2$, but they have the same rank thus an equality, and thus $\widetilde\omega_1 = \mathcal F_1 = \mathcal F_2 = \widetilde\omega_2$. Thus actually the deformation remain in $P_2$.
This proves that points of $P_2$ can only be possibly deformed to a point in $R_2$ or $B_0$. Conversely we can indeed deform to $R_2$ by only deforming $\omega/\omega_2$ to make it non-$\pi$-torsion as in proof of proposition \ref{propclosurestrata}. To deform a point of $P_2$ to $B_0$, it is enough to deform $\omega_1 \subset \omega = \mathcal E[\pi]$ by a non-totally isotropic line. This is possible as this space is smooth (it is a projective space of dimension $> 0)$.
\end{proof}

\section{Case of a general CM field $F$}

Let $(B,\star,V,<,>,h)$ be P.E.L. datum (see \cite{Lan}), so that $B/\QQ$ be a finite dimensional central semi-simple $\QQ$-algebra, with involution $\star$, center $F$. 

\begin{example}
Let $F_0$ be a totally real field, and $F/F_0$ a CM field. Take $B = F$, $\star = c$ the complex conjugacy, $V = F^n$ and polarisation by $(x,y) = xJc(y)$ for an hermitian matrix $J$. Let $p$ be a prime. Then $B_{\QQ_p} = \prod_{\pi_0 | p \in F} F \otimes_{F_0} F_{0,\pi}$. Everything splits over primes above $p$ in $F_0$, thus for simplicity, we can assume that there is only one prime $\pi_0$ of $F_0$ above $p$. Let $e,f$ be the ramification index, and the residual degree of $\pi_0$ over $p$. The case of unramified primes in $F/F_0$ is treated in \cite{BH2}, thus we can assume that $\pi_0$ ramifies in $F_p := F\otimes\QQ_p$ and choose it so that $\pi_0 = \pi^2$ for some uniformizer $\pi$ of $F_p$.
\end{example}

Now fix an integral P.E.L. datum $(\mathcal O_B,\star,\Lambda,<,>)$, so that in particular $\mathcal O_B$ is a $\ZZ_{(p)}$-order in $B$, $\star$-stable and maximal over $\ZZ_p$, and $(\Lambda,<,>) \otimes_\ZZ \QQ = (V,<,>)$. 
\begin{hypothese} We assume the following
\begin{enumerate}
\item $B_{\QQ_p}$ is a product of matrix algebra over finite extension of $\QQ_p$.
\item $p$ is a \emph{good} prime, i.e. $p \nmid [\Lambda^\sharp,\Lambda]$.
\end{enumerate}
\end{hypothese}

To simplify we assume that $\star$ is of the second kind on each simple factors of $(B,\star)$ (in particular we exclude factors of type $D$ see \cite{BH2}, Hypothesis 2.2) : factors 
of type (C) can be dealt with as in \cite{BH2}. In most of what follows, we can treat simple factors separately, so that we will be able to assume $(B_{\QQ_p},\star)$ is a matrix 
algebra over its center or a product of two isomorphic matrix algebras over a field exchanged by $\star$. This second case is treated in \cite{BH2}. So to fix notations we will often 
assume that $B_{\QQ_p} = M_n(F_\pi)$, for some finite extension $F_\pi$ of $\QQ_p$, and we denote $F_{\pi_0} = F_\pi^{\star = 1}$ : the extension $F_\pi/F_{\pi_0}$ is of 
degree 2. 
Moreover, we assume that the local field extension $F_\pi/F_{\pi_0}$ is ramified (otherwise this is treated in \cite{BH2} again). Fix an uniformizer $\pi_0$ of $F_{\pi_0}$ and 
$\pi$ of $F_\pi$ so that $\pi^2 = \pi_0$.
 Let $F_{\pi_0}^{ur}$ be the maximal unramified extension contained in $F_{\pi_0}$, and $\mathcal{T}$ the set of embeddings of $F_{\pi_0}^{ur}$ into $\overline{\QQ_p}$. For each $\tau \in \mathcal{T}$, let $\Sigma_\tau$ be the set of embeddings of $F_{\pi_0}$ extending $\tau$. We write $\Sigma_\tau = \{ \sigma_{\tau,1}, \dots, \sigma_{\tau,e}\}$. For each $\tau \in \mathcal{T}$ and $1 \leq i \leq e$, let $\sigma_{\tau,i}^{+}$ and $\sigma_{\tau,i}^{-}$ be the embeddings of $F_\pi$ extending $\sigma_{\tau,i}$ : these are notations which will remain in force everytime we (implicitely) choose a simple factor of $B_{\QQ_p}$.
 
 As in \cite{BH2}, Section 2.2, we can associate to the Shimura data a combinatorial data $(d_{j,\tau'})_{\tau'}$, ($j$ corresponding to a choice of a simple factor) which, when we restrict to a simple factor of the previous type, is just a collection $(d_{\sigma})_{\sigma : F_{\pi} \hookrightarrow \overline{\QQ_p}}$ satisfying $d_{\sigma \circ c} = h-d_{\sigma}$ for a fixed value of $h \geq 1$ (which might depend on the simple factor). For simplicity we denote for $\tau \in \mathcal T, i =1,\dots,e$, $a_{\tau,i} = d_{\sigma_{\tau,i}^+}, b_{\tau,i} := d_{\sigma_{\tau,i}^-}$.

\begin{defin}
Let $Y$ be the moduli space over $O_{F_\pi}$ whose $R$-points are equivalence classes of tuples $(A, \lambda, \iota, \eta, \omega_1)$ up to $\ZZ_{(p)}^\times$-isogenies, where
\begin{itemize}
\item $A$ is an abelian scheme over $R$
\item $\lambda$ is a $\ZZ_{(p)}^\times$-polarization
\item $\iota : O_B \to End(A)\otimes_\ZZ \ZZ_{(p)}$, making the Rosati involution and $\star$ compatible
\item $\eta$ is a rational $\Lambda$-level structure outside $p$
\item For every simple factor $j = M_n(F_\pi)$ of $B_{\QQ_p}$, there is an associated direct factor $\omega'_j$ of $\omega_A$. By Morita equivalence, we have a $\mathcal O_{F_{\pi}}$-module $\omega_j = \bigoplus_{\tau} \omega_{\tau,j}$. We then ask for
\[0 = \omega_\tau^{[0]} \subseteq \omega_\tau^{[1]} \subseteq \dots  \subseteq \omega_\tau^{[e]} = \omega_{\tau,j},\] is a PR filtration, meaning that each $\omega_\tau^{[i]}$ is locally a direct factor, stable by $O_{F_\pi}$.
\item the quotient $\omega_\tau^{[i]} / \omega_\tau^{[i-1]}$ is locally free of rank $h$.
\item $O_{F_{\pi_0}}$ acts by $\sigma_{\tau,i}$ on $\omega_\tau^{[i]} / \omega_\tau^{[i-1]}$.
\item the filtration is compatible with the polarization
\item For each $i$, $\omega_{\tau}^{[i-1]} \subseteq \omega_{\tau,1}^{[i]} \subseteq \omega_\tau^{[i]}$, where $\omega_{\tau,1}^{[i]}$ is locally a direct factor stable by $O_{F_\pi}$.
\item $\omega_{\tau,1}^{[i]} / \omega_{\tau}^{[i]}$ is locally free of rank $a_{\tau,i}$, and $O_{F_\pi}$ acts by $\sigma_{\tau,i}^{+}$ on it, and by $\sigma_{\tau,i}^-$ on the quotient $\omega_\tau^{[i+1]}/\omega_{\tau,1}^{[i]}$ (which is automatically locally free of rank $b_{\tau,i}$).
\end{itemize}
\end{defin}

Let us be more precise about the compatibility with the polarization. One has a pairing on $\mathcal{E}$, and $\omega_{\tau,j}$ is totally isotropic for this pairing. The compatibility for the filtration is that
\[(\omega_\tau^{[i]})^\bot = Q_\tau^{i}(\pi_0)^{-1} \omega_\tau^{[i]}, \quad Q_\tau^i(T) = \prod_{t = i+1}^e (T-\sigma_{\tau,t}(\pi_0)), \] and $Q_\tau = \prod_{i=1}^e (T-\sigma_{\tau,i}(\pi_0))$ is a minimal polynomial for $\pi_0$ in $\tau(F_{\pi_0}^{ur})$.
Let us define $\mathcal{E}_\tau^{[i]} := (\pi_0 - \sigma_{\tau,i}(\pi_0))^{-1} \omega_{\tau}^{[i-1]} / \omega_{\tau}^{[i-1]}$. It is a locally free sheaf with an action of $O_F$, and an alternating perfect pairing. One has the subsheaves $\mathcal{F}_{\tau}^{[i]} := \omega_{\tau}^{[i]} / \omega_{\tau}^{[i-1]}$, which is totally isotropic for the previous pairing, and $\mathcal{F}_{\tau,1}^{[i]} := \omega_{\tau,1}^{[i]} / \omega_{\tau}^{[i-1]}$. We define $\mathcal{F}_{\tau,2}^{[i]} := (\pi - \sigma_{i,\tau}^-(\pi)) (\mathcal{F}_{\tau,1}^{[i]})^\bot$. 

\begin{defin} Let $k$ be an algebraically closed field of characteristics $p$.
Let $x \in Y(k)$, and let $\tau,i$. We define the integers $h_{\tau}^{[i]}$ and $l_{\tau}^{[i]}$ as the dimensions of $\pi \mathcal{F}_{\tau}^{[i]}$ and $\mathcal{F}_{\tau,1}^{[i]} \cap \mathcal{F}_{\tau,2}^{[i]}$ respectively.
\end{defin}

Let $C := \{ (h_{\tau}^{[i]}, l_{\tau}^{[i]})_{\tau \in \mathcal T, 1 \leq i \leq e}, 0 \leq h_{\tau}^{[i]} \leq l_{\tau}^{[i]} \leq \min (a_{\tau,i}, b_{\tau,i}) \}$. We define a stratification on $X = Y \times \Spec(k_F)$ by
\[X = \coprod_{c \in C} X_c,\]
where $X_c = \{ x \in X(k) | (h_\tau^{[i]}(x),l_{\tau}^{[i]}(x)) = c \}$.
Let $c=(h_{\tau}^{[i]}, l_{\tau}^{[i]})$ and $c' = ({h_{\tau}^{[i]}}', {l_{\tau}^{[i]}}')$ be elements of $C$. We say that $c \leq c'$ if for all $\tau, i$,
$$h_{\tau}^{[i]'} \leq {h_{\tau}^{[i]}} \leq {l_{\tau}^{[i]}} \leq l_{\tau}^{[i]'}$$

\begin{theor}
\label{thm:closurerelations}
One has 
$$\overline{X_c} = \coprod_{c' \leq c} X_{c'}$$
\end{theor}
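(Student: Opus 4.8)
The plan is to reduce everything to the quadratic imaginary case established in Proposition~\ref{propclosurestrata}, using that the crystal $\mathcal E$ and all the extra data split according to the finite index set $\{(\tau,i):\tau\in\mathcal T,\ 1\le i\le e\}$. First one passes to a single simple factor of $B_{\QQ_p}$: the factors of "split" type $M_n\times M_n$ and of type (C) are dealt with exactly as in \cite{BH2}, so we may assume $B_{\QQ_p}=M_n(F_\pi)$ with $F_\pi/F_{\pi_0}$ ramified, and then Morita equivalence together with the decomposition $\omega_j=\bigoplus_\tau\omega_{\tau,j}$ places us in the situation of the definition of $Y$ above. For each $\tau$ the $\tau$-component of $\mathcal E$ carries its PR filtration $0=\omega_\tau^{[0]}\subseteq\cdots\subseteq\omega_\tau^{[e]}$, and the graded pieces $\mathcal E_\tau^{[i]}=(\pi_0-\sigma_{\tau,i}(\pi_0))^{-1}\omega_\tau^{[i-1]}/\omega_\tau^{[i-1]}$ will be the objects to which the quadratic-case analysis is applied.

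For the inclusion $\overline{X_c}\subseteq\coprod_{c'\le c}X_{c'}$, fix a pair $(\tau,i)$. The function $x\mapsto h_\tau^{[i]}(x)=\dim\pi\mathcal F_\tau^{[i]}$ is the rank of the sheaf map "multiplication by $\pi$" on $\mathcal F_\tau^{[i]}$, hence lower semicontinuous, while $x\mapsto l_\tau^{[i]}(x)=\rk\mathcal F_{\tau,1}^{[i]}+\rk\mathcal F_{\tau,2}^{[i]}-\dim(\mathcal F_{\tau,1}^{[i]}+\mathcal F_{\tau,2}^{[i]})$ is upper semicontinuous. Hence on $\overline{X_c}$ one has $h_\tau^{[i]}\le h_\tau^{[i]}(c)$ and $l_\tau^{[i]}\ge l_\tau^{[i]}(c)$ for every $(\tau,i)$, which are exactly the inequalities defining the elements $c'$ with $c'\le c$; so the only strata meeting $\overline{X_c}$ are those $X_{c'}$ with $c'\le c$.

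For the reverse inclusion it suffices, by transitivity of the closure operation and the fact that the poset $C$ is a product of chains (so any $c'\le c$ is joined to $c$ by a saturated chain), to show that a point $x\in X_{c'}(k)$, with $k$ algebraically closed and $c'$ differing from $c$ in a single coordinate $(\tau,i)$, admits a generization in $X_c$. In the special fibre the graded piece $\mathcal E_\tau^{[i]}$ is a free $O_X[\pi]/\pi^2$-module of rank $h$ with a perfect alternating pairing, inside which $\mathcal F_{\tau,1}^{[i]}\subseteq\mathcal F_\tau^{[i]}$ plays precisely the role of $\omega_1\subseteq\omega$ in Definition~\ref{defin:PRmodel}, the quadratic-case parameters $(a,b,m)$ being replaced by $(a_{\tau,i},b_{\tau,i},h)$ with $a_{\tau,i}+b_{\tau,i}=h$. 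By Grothendieck--Messing, Serre--Tate, and the proposition on deformations of $p$-divisible groups over $k[[t]]$ proved above, deforming $x$ amounts to deforming the full flag $(\omega_{\tau'}^{[j]},\omega_{\tau',1}^{[j]})_{\tau',j}$ inside $\widetilde{\mathcal E}$ compatibly with all the PR incidence and isotropy conditions; carrying this out step by step in $j$ (and over the finitely many $\tau'$), at the step $(\tau,i)$ one inserts verbatim the explicit local deformation from the proof of Proposition~\ref{propclosurestrata} (described there by matrices $X,Y,Z$, resp. $Y_2,Z$) applied to $\mathcal E_\tau^{[i]}$, choosing ranks so that $(h_\tau^{[i]},l_\tau^{[i]})$ acquires the value prescribed by $c$ while every other coordinate keeps its value in $c'$; the ambient space of the step $(\tau,i+1)$ is then recomputed from the already-deformed filtration and the process continues.

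The crux is to check that these local deformations, performed at the distinct steps $(\tau',j)$, do not obstruct one another, i.e. that the flag of graded pieces deforms as a product over $(\tau',j)$. The cleanest route is to argue on the local model, exactly as in the proof of Proposition~\ref{prop:flat0}: the special fibre $N$ of the local model $\mathcal N$ decomposes as a product $N\simeq\prod_{\tau,i}N^{(\tau,i)}$ of the quadratic-case special-fibre local models, one factor with parameters $(a_{\tau,i},b_{\tau,i},h)$ for each $(\tau,i)$, compatibly with the stratifications; since all the relevant Grassmannians of isotropic subspaces are smooth, the deformation at one factor can be propagated to the later ambient spaces $\mathcal E_\tau^{[i+1]},\dots,\mathcal E_\tau^{[e]}$ without losing the freedom needed there. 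Thus $X=\coprod_cX_c$ and its closure relations are the "products" of the quadratic ones, and Proposition~\ref{propclosurestrata} applied factor by factor yields $X_{c'}\subseteq\overline{X_c}$ whenever $c'$ differs from $c$ in one coordinate; transitivity then gives the full statement. I expect this product decomposition of the local model and the verification that it is compatible with both stratifications to be the one genuinely technical point.
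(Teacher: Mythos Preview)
Your strategy is the paper's: semicontinuity for one inclusion, and reduction to Proposition~\ref{propclosurestrata} one index $(\tau,i)$ at a time for the other. The paper inducts directly on $i$ rather than going through saturated chains and transitivity of closure, but that is cosmetic.

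One point deserves correction. The special fibre $N$ of the local model is \emph{not} a product $\prod_{\tau,i}N^{(\tau,i)}$: the ambient space $\mathcal E_\tau^{[i]}=(\pi_0-\sigma_{\tau,i}(\pi_0))^{-1}\omega_\tau^{[i-1]}/\omega_\tau^{[i-1]}$ depends on the previously chosen $\omega_\tau^{[i-1]}$, so the structure is an iterated fibration $\mathcal N_{\le e}\to\cdots\to\mathcal N_{\le 0}$ (shown, in the proof of the subsequent flatness theorem, to be Zariski-locally trivial). This matters for the step you flag as the crux: after deforming $(\omega_{\tau,1}^{[i]},\omega_\tau^{[i]})$ over $k[[t]]$, you must still lift the remaining filtration $\omega_\tau^{[i]}\subset\omega_{\tau,1}^{[i+1]}\subset\omega_\tau^{[i+1]}\subset\cdots$ into the new quotient $\widetilde{\mathcal E}/\widetilde\omega_\tau^{[i]}$ so that the invariants $(h_\tau^{[j]},l_\tau^{[j]})$ for $j>i$ do not move. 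The paper does this concretely: it takes an \emph{isotrivial} lift of the remaining filtration inside $\pi_0^{\,e-i}\,\widetilde\omega_\tau^{[i]}/\widetilde\omega_\tau^{[i]}$ (which has a natural lift once $\widetilde\omega_\tau^{[i]}$ is fixed), passes to the generic point over $k((t))^{\mathrm{perf}}$, and repeats. Your phrases ``recomputed'' and ``propagated without losing the freedom needed there'' are pointing at exactly this, but you should supply the mechanism rather than appeal to a global product decomposition that does not hold. (Also, $C$ is a product of posets, not of chains; the conclusion you draw from this is nonetheless correct, since covering relations in a product poset move a single factor.)
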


\begin{proof}
When deforming a point of $Y$, one has to deform the Hodge filtration. We do it one $\tau$ at a time as follows. \\
By the results of the previous section, we can deform both $\omega_{\tau,1}^{[1]} \subseteq \omega_\tau^{[1]}$ inside $\widetilde{\mathcal E}_{\tau}^{[1]} := \mathcal{E}_\tau^{[1]} \otimes_k k[[t]]$, with the deformation $\widetilde{\omega}_\tau^{[1]}$ of $\omega_\tau^{[1]}$ isotropic (for the divided pairing) and with $(h_{\tau}^{[1]},l_\tau^{[1]}) = (h_{\tau}^{[1]'},l_\tau^{[1]'})$. This is the result of Proposition \ref{propclosurestrata}. 
Then, look at $\pi^{2(e-1)}\omega_{\tau}^{[1]}/\omega_{\tau}^{[1]}$ : this space has a natural lift $\pi^{2(e-1)}\widetilde{\omega}_\tau^{[1]}/\widetilde{\omega}_\tau^{[1]}$ inside $\widetilde{\mathcal E}/\widetilde{\omega}_{\tau}^{[1]}$. We then take a isotrivial lift of the filtration $\dots \subset \omega_{\tau}^{[i-1]}/\omega_{\tau}^{[1]} \subset \omega_{\tau,1}^{[i]}/\omega_\tau^{[1]} \subset \omega_\tau^{[i]}/\omega_\tau^{[1]} \subset \dots$, for $e \geq i \geq 1$ and then pull back to $\widetilde{\mathcal E} = \mathcal E \otimes_k k((t))$ to get a full lift, and we get a point over $k((t))^{perf}$ with new $\widetilde{c}_\tau^{[1]} = c_\tau^{[1]'}$ but $\widetilde{c}^{[i]}_\tau = c_\tau^{[i]}$ for $i \geq 2$. Then by induction, we can assume that for $1 \leq s \leq i$ we have $c_\tau^{[s]} := (h_\tau^{[s]},l_\tau^{[s]}) = (h_\tau^{[s]'},l_\tau^{[s]'}) =: c_\tau^{[s]'}$ for our point over $k$. Then one deforms $\omega_{\tau,1}^{[i+1]}/\omega_{\tau,1}^{[i]} \subset \omega_\tau^{[i+1]}/\omega_{\tau,1}^{[i]}$ inside $\widetilde{\mathcal E}_\tau^{[i]}$ again using Proposition \ref{propclosurestrata}, and we do the same isotrivial lift for the rest of the filtration as when $i = 0$, to get the induction step, and thus the result.
\end{proof}

\begin{theor}
$Y$ is normal and flat over $\mathcal O_{F_\pi}$, and its special fiber is reduced and Cohen-Macaulay. 
\end{theor}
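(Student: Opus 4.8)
The plan is to repeat, in this more general setting, the argument of Proposition~\ref{prop:flat0}, reducing everything to the local model. First I would form the local model diagram
\[ Y \longleftarrow \widetilde Y \longrightarrow \mathcal N, \qquad \widetilde Y = \Isom_{\mathcal O_B,<,>}(\mathcal E,\Lambda\otimes\mathcal O_Y),\]
in which the left-hand map is a torsor under a smooth group scheme (hence smooth and surjective) and the right-hand map sends a point to its full PR filtration inside the trivialised $\mathcal E$, and is formally smooth by Grothendieck--Messing. Since flatness over $\mathcal O_{F_\pi}$, normality, reducedness and Cohen--Macaulayness of the special fibre all descend along smooth surjections and can be tested after a formally smooth base change, it is enough to establish these properties for $\mathcal N$ (and, for the special fibre, for $N=\mathcal N\otimes k_F$).

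Next I would reduce to the situation of Section~2. Simple factors of $(B_{\QQ_p},\star)$ of type (C), as well as the split case, are treated in \cite{BH2}, so one may assume $B_{\QQ_p}=M_n(F_\pi)$ with $F_\pi/F_{\pi_0}$ ramified quadratic; by Morita equivalence $\mathcal N$ is replaced by the model built from the $\mathcal O_{F_\pi}$-module data. After an unramified base change — which affects none of the four properties — the idempotents splitting $F_{\pi_0}^{ur}\otimes\overline{\QQ_p}$ decompose this model as a product $\prod_{\tau\in\mathcal T}\mathcal N_\tau$, so it suffices to treat a single factor $\mathcal N_\tau$.

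The heart of the matter is that deforming the PR filtration $\omega_\tau^{[0]}\subset\cdots\subset\omega_\tau^{[e]}$, together with the intermediate sheaves $\omega_{\tau,1}^{[i]}$, decouples into its $e$ successive steps, which is exactly what the proof of Theorem~\ref{thm:closurerelations} exploits. Let $\mathcal N_\tau^{\le i}$ be the partial model remembering only $\omega_\tau^{[0]}\subset\cdots\subset\omega_\tau^{[i]}$ and the $\omega_{\tau,1}^{[j]}$ for $j\le i$, so that $\mathcal N_\tau^{\le 0}$ is the base and $\mathcal N_\tau^{\le e}=\mathcal N_\tau$. The claim I would prove is that the forgetful map $\mathcal N_\tau^{\le i}\to\mathcal N_\tau^{\le i-1}$ is, étale-locally on the target, isomorphic to the projection $\mathcal N_\tau^{\le i-1}\times\mathcal N_{a_{\tau,i},b_{\tau,i}}\to\mathcal N_\tau^{\le i-1}$, where $\mathcal N_{a,b}$ denotes the quadratic-imaginary local model of Definition~\ref{defin:PRmodel}: once $\omega_\tau^{[i-1]}$ has been fixed, the sheaf $\mathcal E_\tau^{[i]}=(\pi_0-\sigma_{\tau,i}(\pi_0))^{-1}\omega_\tau^{[i-1]}/\omega_\tau^{[i-1]}$ carries a perfect alternating $\mathcal O_{F_\pi}$-linear pairing and has the expected rank, so trivialising it étale-locally turns the deformation of $\omega_{\tau,1}^{[i]}\subset\omega_\tau^{[i]}$ inside it into precisely the moduli problem defining $\mathcal N_{a_{\tau,i},b_{\tau,i}}$, with $\pi_1,\pi_2$ the two roots $\sigma_{\tau,i}^{\pm}(\pi)$ of $T^2-\sigma_{\tau,i}(\pi_0)$. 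Equivalently, the complete local ring of $\mathcal N_\tau$ at a closed point of its special fibre is the completed tensor product over $W_{\mathcal O}(k)$ of the rings $R_{a_{\tau,i},b_{\tau,i}}$, $1\le i\le e$, appearing in the Claim in the proof of Proposition~\ref{prop:flat0}. Granting this, one climbs the tower $\mathcal N_\tau^{\le 0}\to\cdots\to\mathcal N_\tau^{\le e}$: at each step one base-changes the previous model (flat over the base, with reduced Cohen--Macaulay special fibre by the inductive hypothesis) by $\mathcal N_{a,b}$, whose special fibre is reduced and Cohen--Macaulay over \emph{any} field by \cite{CN} and \cite{DP} (as in Proposition~\ref{prop:flat0}), hence geometrically reduced and geometrically Cohen--Macaulay; since $k_F$ is perfect, the resulting product of special fibres remains reduced and Cohen--Macaulay. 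Thus $\mathcal N_\tau$, hence $\mathcal N$, hence $Y$, is flat over $\mathcal O_{F_\pi}$ with reduced Cohen--Macaulay special fibre.

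Finally, normality follows from Serre's criterion exactly as in Proposition~\ref{prop:flat0}: $\mathcal N$ is $S_2$ since its special fibre is Cohen--Macaulay and it is flat over the discrete valuation ring $\mathcal O_{F_\pi}$; its generic fibre is the classical unramified local model, a product of ordinary Grassmannians once $F_\pi$ splits (cf.\ \cite{BH2}), hence smooth; and at a codimension-one point lying in the special fibre, flatness over $\mathcal O_{F_\pi}$ together with reducedness of the special fibre forces the local ring to be a discrete valuation ring, so $\mathcal N$ is $R_1$. Hence $\mathcal N$ is normal, and the local model diagram transports normality to $Y$. The step I expect to be the main obstacle is the decoupling claim of the third paragraph: one must verify carefully that, after fixing the lower steps of the PR filtration, the sheaf $\mathcal E_\tau^{[i]}$ with its induced pairing and $\mathcal O_{F_\pi}$-action is exactly of the shape occurring in Section~2 (perfect alternating pairing, $\pi^2=\sigma_{\tau,i}(\pi_0)$, ranks $a_{\tau,i}$ and $b_{\tau,i}$), and that the remaining flag constraints relating $\omega_\tau^{[i]}$ to $\omega_\tau^{[i+1]}$ impose nothing further on this step — that is, that the étale-local product decomposition holds integrally and not merely on special fibres. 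Everything else is contained either in the proof of Theorem~\ref{thm:closurerelations} or in the commutative-algebra input of \cite{CN} and \cite{DP} already used in Proposition~\ref{prop:flat0}.
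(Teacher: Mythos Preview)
Your proposal is correct and follows essentially the same strategy as the paper: reduce to the local model $\mathcal N$ via the local model diagram, decompose over simple factors and embeddings $\tau$, set up the tower of partial models $\mathcal N_{\le i}\to\mathcal N_{\le i-1}$, show each step is (locally on the base) a product with the quadratic-imaginary local model from Section~2, and then invoke Proposition~\ref{prop:flat0} together with the commutative algebra of \cite{CN}, \cite{DP} for the induction, finishing with Serre's criterion. The only noteworthy difference is that where you anticipate an \emph{\'etale}-local product decomposition, the paper in fact obtains it \emph{Zariski}-locally: over a small affine $U\subset\mathcal N_{\le i-1}$ on which $E^{[i]}$ is free, one diagonalises the perfect alternating pairing by an explicit change of basis (first to $\mathrm{diag}(a_1,\dots,a_h)$ on the off-diagonal blocks, then rescaling to the standard form), so that $\mathcal N_{\le i}\times_{\mathcal N_{\le i-1}}U\simeq U\times_{\mathcal O_{F_\pi}}\mathcal N'$ with $\mathcal N'$ the model of Proposition~\ref{prop:flat0}; this makes the ``decoupling claim'' you flag as the main obstacle entirely elementary.
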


\begin{proof}
Consider again a local model diagram as in the proof of Proposition \ref{prop:flat0}. The local model splits over direct factors of $B_{\QQ_p}$, thus it is sufficient to show the theorem for one such factor only. Let $F_\pi/F_{\pi_0}, e,f$ etc. as before, $M_n(\mathcal O_{F_{\pi}})$ the factor, and denote $\Lambda'$ the part of $\Lambda \otimes_\ZZ \ZZ_p$ corresponding to this simple factor and using Morita equivalence (so that $\Lambda = \sum_j \mathcal O_{F_{\pi}}^n\otimes_{\mathcal O_{F_{\pi}}} \Lambda'$). We have a diagram
\[ Y \longleftarrow \widetilde{Y} = \Isom(\mathcal E,\Lambda\otimes \mathcal O_S) \fleche \mathcal N,\]
where the first map is a torsor over a smooth group scheme $\mathcal G$, and the second map is formally smooth and $\mathcal G$-equivariant by Grothendieck-Messing. Here $\mathcal N$ is a local model, see e.g. \cite{P-R} section 14, analogous to the one in the proof of Proposition \ref{prop:flat0}, parametrizing 
\begin{itemize}
\item
A PR-filtration $0 = F_{\tau}^{[0]} \subset F_{\tau}^{[1]} \subset \dots \subset F_{\tau}^{[e]} = \Lambda_{\tau,j} \otimes \mathcal O_S$ in $\Lambda \otimes \mathcal O_S$, each $F_\tau^{[i]}$ is a locally direct factor, stable by $\mathcal O_{F_\pi}$.
\item Each quotient $F_{\tau}^{[i]}/F_{\tau}^{[i-1]}$ is locally free of rank $h = a_{\tau,i} + b_{\tau,i}$ and $\mathcal O_{F_{\pi_0}}$ acts by $\sigma_{\tau,i}$ on it.
\item The filtration is compatible with the polarisation
\item For each $i$, a locally direct factor $F_\tau^{[i-1]} \subset F_{\tau,1}^{[i]} \subset F_\tau^{[i]}$, stable by $\mathcal O_{F_\pi}$,
\item $F_{\tau,1}^{[i]}/F_\tau^{[i-1]}$ is a locally direct factor of rank $a_{\tau,i}$ and $\mathcal O_{F_\pi}$ acts through $\sigma_{\tau,i}^+$
\item $\mathcal O_{F_\pi}$ acts through $\sigma_{\tau,i}^-$ on $F_\tau^{[i]}/F_{\tau,1}^{[i]}$ (and this is automatically locally free of rank $b_{\tau,i}$).
\end{itemize}
$F_\tau^{[i]}$ is obviously the analog of the $\omega_\tau^{[i]}$ in the definition of $Y$, and $F_{\tau,1}^{[i]}$ of $\omega_{\tau,1}^{[i]}$. Thus it is enough to see that $\mathcal N$ is flat, normal, and its special fiber is Cohen-Macaulay. The theorem \ref{thm:closurerelations} actually shows that 
\[ N := \overline{\mathcal N} = \coprod_{c} N_c,\]
with expected (strong) closure relations. The proof of Proposition 2.12 for the maximal strata carries over and shows (doing one $F_{\tau,1}^{[i]}$ at a time) that maximal strata of $N$ are smooth, thus reduced, and $\mathcal N$ is smooth in codimension 1. For each $i$, we have a space $\mathcal N_{\leq i}$ parametrizing locally direct factors $F_{\tau}^{[1]} \subset \dots \subset F_\tau^{[i]} \subset \Lambda_{\tau,j} \otimes \mathcal O_S$ with the same properties as before, together with $F_{\tau,1}^{[k]}$ in $F_{\tau}^{[k]}/F_\tau^{[k-1]}$ of rank $a_{\tau,k}$ such that the actions of $\mathcal O_{F_\pi}$ is through $\sigma_{\tau,k}^+$ on $F_{\tau,1}^{[k]} / F_\tau^{[k-1]}$ and by $\sigma_{\tau,k}^-$ on the cokernel of the inclusion, for $k = 1,\dots,i$. We have a natural maps 
\[ \mathcal N = \mathcal N_{\leq e} \fleche \mathcal N_{\leq e-1} \fleche \dots \fleche \mathcal N_{\leq 1} \fleche \Spec(\mathcal O_F) =: \mathcal N_{\leq 0}.\]
We will show inductively that $\mathcal N_{\leq i}$ is flat over $\mathcal O_{F_\pi}$, with Cohen-Macaulay fibers. 

As $\mathcal N$ and $\mathcal N_{\leq i}$ decomposes naturally as product over the simple factors of $B_{\QQ_p}$, and over the index $\tau$, thus we can assume that there is only one factor and that $\mathcal O_B \otimes \ZZ_p = \mathcal O_F$ and that $\mathcal T = \{\tau\}$ so we suppress $\tau$ from the notations. Denote ${E}^{[i]} := (\pi_0 - \sigma_{\tau,i}(\pi_0))^{-1} F^{[i-1]} / F^{[i-1]}$, endowed with its own (perfect) pairing. By definition $\mathcal N_{\leq i}$ over $\mathcal N_{\leq i-1}$ parametrizes locally direct factors $F_1^{[i]}$ and $F^{[i]}$ of $E^{[i]}$ of respective ranks $a_i$ and $h = a_i + b_i$, such that moreover $F_1^{[i]} \subset F^{[i]}$ and $F_1^{[i]} \subset E^{[i]}[\pi - \sigma_i^+(\pi)]$, and a compatibility for the polarisation. We assume that $\mathcal N_{i-1}$ is flat over $\mathcal O_{F_\pi}$,  with Cohen-Macaulay fibers. So let $U \subset \mathcal N_{\leq i-1}$ for $i \geq 1$ a small affine so that all $F^{[k]}, k < i$ and ${E}^{[i]} := (\pi_0 - \sigma_{\tau,i}(\pi_0))^{-1} F^{[i-1]} / F^{[i-1]}$ are free. Now we claim that we can make the pairing of $E^{[i]}$ locally trivial. First, at it is perfect, we choose a basis so that it is of the form
\[
\left(
\begin{array}{cc}
 0 &   
\left(
\begin{array}{ccc}
 a_1 &   &   \\
  &   \ddots &   \\
  &   &   a_h
\end{array}
\right)
   \\
\left(
\begin{array}{ccc}- a_1 &   &   \\
  &   \ddots &   \\
  &   &   -a_h
\end{array}
\right)  &   0
\end{array}
\right),
\]
with $a_i \in \mathcal O_S^\times$. But now, up to changing the basis vectors $(e_1,\dots,e_h,f_1,\dots,f_h)$ by \\
$(e_1,\dots,e_h,a_1^{-1}f_1,\dots,e_h^{-1}f_h)$, it is of the desired form
\[
\left(
\begin{array}{cc}
 0 &   
\left(
\begin{array}{ccc}
 1 &   &   \\
  &   \ddots &   \\
  &   &   1\end{array}
\right)
   \\
\left(
\begin{array}{ccc}- 1 &   &   \\
  &   \ddots &   \\
  &   &   -1
  \end{array}
\right)  &   0
\end{array}
\right),
\]
and thus $\mathcal N_{\leq i}\times_{\mathcal N_{\leq i-1}} U \simeq U \times_{\mathcal O_{F_\pi}} \mathcal N'$ where $\mathcal N'$ classifies $\mathcal O_{F_{\pi_0}}$ locally direct factors
$(\mathcal F_1,\mathcal F)$ inside $\mathcal O_{F_\pi}^{2h} \simeq \mathcal O_{F_{\pi_0}}[X]/(X^2- \pi_0)^{2h}$ of respective ranks $a_i,a_i+b_i$ satisfying relations analogous to the one of $\mathcal N$ in the proof of Proposition \ref{prop:flat0}. In particular, we can check that changing $\ZZ_p$ by $\mathcal O_{F_{\pi_0}}$ (and thus $p$ by $\pi_0$) in the proof there we have that $\mathcal N'$ is flat over $\mathcal O_{F_\pi}$, with reduced Cohen-Macaulay fibers. Thus $\mathcal N' \times U$ is flat and Cohen-Macaulay over $U$, thus $\pi_i : \mathcal N_{\leq i} \fleche \mathcal N_{i-1}$ is flat and Cohen-Macaulay (with reduced fibers). Thus $\mathcal N_{\leq i}$ is flat and Cohen-Macaulay over $\mathcal O_{F_\pi}$ by the induction hypothesis and by induction $\mathcal N \fleche \mathcal O_{F_\pi}$ is flat with (reduced) Cohen-Macaulay fibers. Moreover, $ \mathcal N$ is normal being smooth in codimension 1.

\end{proof}

\section{The case when $p=2$.}
\label{sect:6}
In this section, we will investigate the case $p=2$.

\subsection{Quadratic forms in characteristic $2$}

Let $k$ be an algebraically closed field of characteristic $2$, let $V$ be a $k$-vector space of dimension $d$. Let $<,>$ be a non-degenerate symmetric bilinear form, and let $q$ be the associated quadratic form defined by $q(x) = <x,x>$.

\begin{prop}
\label{propformquadcar2}
Up to isomorphism, we are in one of the two following situations:
\begin{enumerate}
\item $q$ is not identically zero, and the matrix of the bilinear form is the identity matrix in a certain basis.
\item $q$ is identically zero. This implies that $d$ is even, and the matrix of the bilinear form in a certain basis is of the form
$$\left( \begin{array}{cccc}
A & 0 & \dots & 0 \\
0 & A & \ddots & \vdots \\
\vdots & \ddots & \ddots & 0 \\
0 & \dots & 0 & A
\end{array} \right) \qquad A = \left ( \begin{array}{cc}
0 & 1 \\
1 & 0
\end{array} \right )$$
\end{enumerate}
\end{prop}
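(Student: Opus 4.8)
The plan is to separate the two situations, the crucial structural fact being that in characteristic $2$ the ``norm'' map $q\colon x\mapsto \langle x,x\rangle$ is \emph{additive}: expanding $\langle x+y,x+y\rangle=\langle x,x\rangle+\langle y,y\rangle+2\langle x,y\rangle$ and using $2=0$ gives $q(x+y)=q(x)+q(y)$, while $q(\lambda x)=\lambda^2 q(x)$. Hence $q$ is a Frobenius-semilinear map $V\to k$; in particular $W:=\{x:q(x)=0\}$ is a $k$-subspace, and since $k$ is perfect the image of $q$ is a $k$-subspace of $k$ (it is closed under addition, and $\mu q(x)=q(\nu x)$ for $\mu=\nu^2$). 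So either $q\equiv 0$ (and $W=V$) or $q$ is surjective (and $W$ has codimension $1$); these are exactly the two alternatives of the statement. First I would record these elementary remarks, together with the observation that $q\equiv 0$ is equivalent to $\langle,\rangle$ being alternating.

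\textbf{Case (2): $q\equiv0$.} Here $\langle,\rangle$ is a non-degenerate alternating form, so this is the classical symplectic basis theorem, proved in the usual way: if $V\neq0$, pick $f_1\neq0$; non-degeneracy gives $g_1$ with $\langle f_1,g_1\rangle=1$; the plane $P=\langle f_1,g_1\rangle$ has Gram matrix $A$, hence is non-degenerate, so $V=P\perp P^\perp$ and one iterates on $P^\perp$. The iteration strips off two-dimensional blocks until nothing is left, which forces $d$ to be even (a one-dimensional alternating space is degenerate, so the process cannot halt in odd dimension), and in the resulting basis the Gram matrix is the block-diagonal matrix of copies of $A$ claimed in the statement.

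\textbf{Case (1): $q\not\equiv0$.} Here I claim $\langle,\rangle$ has an orthonormal basis, i.e. Gram matrix $I_d$, and I would argue by induction on $d$. For $d=1$, non-degeneracy forces $\langle v,v\rangle=c\neq0$, and replacing $v$ by $c^{-1/2}v$ (legitimate as $k$ is perfect) gives an orthonormal basis. For $d\geq2$, choose an anisotropic vector $v$ (one exists since $q\not\equiv0$) and split $V=\langle v\rangle\perp V'$ with $V'=\langle v\rangle^\perp$ non-degenerate of dimension $d-1$. If $q|_{V'}\not\equiv0$, the inductive hypothesis gives an orthonormal basis of $V'$, which together with a suitable scalar multiple of $v$ is an orthonormal basis of $V$. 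If instead $q|_{V'}\equiv0$, then by Case (2) $\dim V'=2m$ is even, $d=2m+1$, and $V\simeq\langle1\rangle\perp(H\perp\cdots\perp H)$ with $m$ copies of the plane $H$ of Gram matrix $A$. The key point is the isometry $\langle1\rangle\perp H\simeq\langle1\rangle\perp\langle1\rangle\perp\langle1\rangle=I_3$: inside $I_3$ with orthonormal basis $e_1,e_2,e_3$, the vector $e_1+e_2+e_3$ is anisotropic and $\{e_1+e_2,\,e_2+e_3\}$ spans its orthogonal complement with Gram matrix $A$. Feeding this identity into a short secondary induction on $m$ (split one $\langle1\rangle\perp H\simeq I_3$ off $\langle1\rangle\perp(H\perp\cdots\perp H)$ and regroup) gives $\langle1\rangle\perp(H\perp\cdots\perp H)\simeq I_{2m+1}$, finishing the induction.

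\textbf{Main obstacle.} The delicate step is precisely the alternating sub-case of Case (1): in odd characteristic one would simply diagonalize one vector at a time, but in characteristic $2$ the orthogonal complement of an anisotropic line inside a non-alternating space can itself be alternating, so the naive induction stalls. What rescues it is the characteristic-$2$ identity $\langle1\rangle\perp H\simeq I_3$; once that is available, the remainder is a routine combination of induction and the symplectic basis theorem.
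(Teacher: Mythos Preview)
Your proof is correct and follows essentially the same approach as the paper's: both argue by induction on dimension, split off an anisotropic vector in Case~(1), and resolve the sub-case where the orthogonal complement becomes alternating via the identity $\langle 1\rangle\perp H\simeq I_3$, established by the very same change of basis $e_1+e_2+e_3,\ e_1+e_2,\ e_2+e_3$. Your write-up is slightly more explicit than the paper's (you spell out the Frobenius-semilinearity of $q$ and the secondary induction on the number of hyperbolic planes, whereas the paper absorbs both into a terse ``this gives the result''), but there is no genuine difference in strategy.
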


\begin{proof}
One proves this result by induction on the dimension. If the dimension is $1$ or $2$, it is an easy computation. \\
Assume the result true for all $k \leq d-1$, and let us prove it for $d$. Assume that $q$ is identically zero. Take a vector $e_1$ in $V$ and a vector $e_2$ such that $<e_1, e_2> = 1$. Let $F$ be the orthogonal of the vector space spanned by $e_1, e_2$. Applying the induction hypothesis to $F$ gives the result. \\
Assume now that $q$ is not identically $0$. Let $e_1$ be a vector, normalized such that $<e_1, e_1> = 1$. Let $F$ be the orthogonal of the space generated by $e_1$. One can apply the induction result to $F$. This gives the result, noticing that the matrices
$$ \left ( \begin{array}{ccc}
1 & 0 & 0 \\
0 & 0 & 1 \\
0 & 1 & 0
\end{array} \right ) \qquad \left ( \begin{array}{ccc}
1 & 0 & 0 \\
0 & 1 & 0 \\
0 & 0 & 1
\end{array} \right )$$
are equivalent. Indeed, if $e_1, e_2, e_3$ is a basis for which the matrix is the second one, then the change of basis $e_1' = e_1 + e_2 + e_3$, $e_2' = e_1 + e_2$, $e_3' = e_2 + e_3$ gives the first matrix.
\end{proof}

\subsection{Geometry in the first case}

In this section we assume that we are in the first case, i.e. the modified pairing given on $A[\pi]$ is given by the identity matrix.

\begin{prop}
The smooth locus is $X_{0,0}$, which has dimension $ab$. Moreover, the other strata $X_{(h,\ell)}$, $\ell \neq 0$, are non-smooth.
\end{prop}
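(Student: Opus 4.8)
The plan is to read off the smooth locus from the explicit local model, using that in the first case the modified pairing has the \emph{non-split} normal form of Proposition~\ref{propformquadcar2}(1) rather than the split form used when $p\neq 2$. As in the proof of Proposition~\ref{prop:flat0} there is a local model diagram $Y\longleftarrow\widetilde Y\longrightarrow\mathcal N$, the left arrow a smooth torsor and the right arrow formally smooth and equivariant for the group $G$ acting on $\mathcal N$; this construction is insensitive to the residue characteristic. Hence smoothness, reducedness and local dimension of $X$ at a point can be tested on $N=\mathcal N\otimes k_F$ at the corresponding point. Since each stratum is homogeneous under $G$ (as in the proof of Proposition~\ref{prop:flat0}), it is enough to compute the completed local ring $\widehat{\mathcal O}_{\mathcal N,x_0}$ at a single point $x_0$ of the closed stratum $N_{0,a}$: as in characteristic $\neq 2$, this one ring already exhibits all the strata $N_{h,\ell}$, $0\le h\le\ell\le a$, in a neighbourhood of $x_0$ (one will have $h=\rk Z$ and $\ell=\dim\ker M$ with the notation below).

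First I would redo the computation of the Claim in the proof of Proposition~\ref{prop:flat0} at $x_0\in N_{0,a}$, but now normalising the modified pairing on $(\Lambda\otimes k)[\pi]$ to the identity form. Lifting $\mathcal F_1$ by a matrix with blocks $I_a$, $X$, $Y$, forming its orthogonal, lifting $\mathcal F$ by the residual matrix with blocks $I_a$, $Z$, and imposing isotropy of $\mathcal F$ for the original alternating pairing, one obtains a presentation of the same shape as in \cite{DP}, namely
\[
\widehat{\mathcal O}_{\mathcal N,x_0}\;\cong\;W_{\mathcal O}(k)[[Z,X,Y]]\big/\bigl(\,Z-{}^tZ,\ M Z-(\pi_1-\pi_2)I_a\,\bigr),\qquad M=Y+{}^tY+{}^tXX,
\]
with $Z$ a symmetric $a\times a$ matrix, $Y$ an $a\times a$ matrix and $X$ a $(b-a)\times a$ matrix (the formulas for the blocks differ only cosmetically from the split case, since the Gram term is computed against the pairing, which is still an identity block). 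The point is that in characteristic $2$ the substitution $Y\rightsquigarrow(Y+{}^tY,\,Y-{}^tY)$ used in the split case is no longer available: $Y+{}^tY$ ranges only over the zero-diagonal symmetric matrices, so $M$ is a symmetric matrix whose diagonal entries are the perfect squares $M_{ii}=\bigl(\sum_k X_{ki}\bigr)^2$.

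Reducing modulo $\mathfrak m_{\mathcal O_{F_p}}$ (so $\pi_1-\pi_2\mapsto 0$) gives $\widehat{\mathcal O}_{N,x_0}\cong k[[Z,X,Y]]/(Z-{}^tZ,\,MZ)$, with $h(x)=\rk Z$ and $\ell(x)=\dim\ker M$. On the open locus $X_{0,0}=\{Z=0,\ \det M\neq 0\}$ the relation $MZ=0$ forces all entries of $Z$ to vanish, so the ring is a power series ring in $X$ and $Y$ of Krull dimension $a(b-a)+a^2=ab$; thus $X_{0,0}$ is smooth of dimension $ab$. On the complement $\{\ell\geq 1\}=\{\det M=0\}$ (note that here $\det M$ is itself a perfect square) one localises and performs an elementary coordinate change bringing one of the scalar relations to the form $\bigl(\sum_k X_{ki}+(\text{linear})\bigr)^2=0$; already for $a=1$ the ring is $k[[z,X,Y]]/\bigl(z\,(\sum_k X_{k1})^2\bigr)$, whose non-reduced locus is exactly $\{\sum_k X_{k1}=0\}=\{\ell=1\}$ and which carries the nilpotent $z\sum_k X_{k1}$; and the analogous check for $a=2$ produces a nilpotent $\varepsilon'$ with $(\varepsilon')^2=0$ living on $\{\ell\geq 1\}$ over both $X_{1,1}$ and $X_{0,1}$. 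Hence $\widehat{\mathcal O}_{N,x_0}$, and therefore $X$, is non-reduced — a fortiori non-smooth — at every point with $\ell\geq 1$. Together with the previous paragraph and the openness of $X_{0,0}$ in $X$, this shows that the smooth locus of $X$ is exactly $X_{0,0}$, which settles the statement.

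The main obstacle is the second step: carrying out the local-model computation with the identity pairing in place of the split one and checking that the Gram term ${}^tXX$ survives with squared diagonal entries, and then, in the third step, identifying the non-reduced structure on \emph{each} stratum $X_{h,\ell}$ with $\ell\geq 1$ rather than merely at one point. Conceptually the mechanism is that in the first case the quadratic form attached to the modified pairing is $q(x)=\bigl(\sum_i x_i\bigr)^2$, a non-zero perfect square: the isotropy conditions cutting out the degenerate strata become squares of linear forms, so — at exactly the spot where the characteristic-$\neq 2$ argument produced an obstruction from the scalar $2$ — one now gets a non-reduced, hence non-smooth, local structure; whereas on $X_{0,0}$ non-degeneracy of the modified pairing on $\omega_1$ is an open condition and imposes no constraint.
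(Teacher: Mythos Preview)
Your approach is genuinely different from the paper's. The paper does not compute the local ring; it argues in two steps. First, using a normal form (the paper's first Claim) in which the pairing is $I_{a+b}$ and $W=\omega_1\cap\omega_2$ is spanned by $e_1+e_2,\ldots,e_{2\ell-1}+e_{2\ell}$, it writes down an explicit first-order deformation of $W$ (essentially adding $tE_{1,1}$) that is isotropic mod $t^2$ but cannot be extended mod $t^3$, because in characteristic $2$ every matrix of the form $N+{}^tN$ has zero diagonal. This shows each $X_{h,\ell}$ with $\ell\neq 0$ is nowhere smooth \emph{as a scheme}. A second deformation argument (pushing $h$ upward inside a putative smooth open) then transfers this to non-smoothness of $X$. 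Your route, if it succeeds, would show directly that $X$ is non-reduced on $\{\ell\geq 1\}$---a stronger and more conceptual conclusion---at the price of a harder computation.

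There are genuine gaps. First, you write ``normalising the modified pairing to the identity form'' and then parametrise $\omega_1$ by columns of $(I_a,X,Y)^t$; these are incompatible, since at $x_0\in N_{0,a}$ the space $\omega_1$ is totally isotropic and hence cannot be the span of $e_1,\ldots,e_a$ for the identity pairing. What does work is to notice that the paper's $Q$-form already has non-zero associated quadratic form in characteristic $2$ precisely when $b>a$ (namely $(\sum_{a<i\le b}x_i)^2$), so in that range $Q$ is equivalent to $I_{a+b}$ and the Claim in Proposition~\ref{prop:flat0} can be reused verbatim to give your presentation. When $a=b$, however, the $Q$-form is alternating (case~2 of Proposition~\ref{propformquadcar2}) and you provide no substitute computation for case~1. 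Second, and more seriously, you exhibit nilpotents only for $a\le 2$. Your observation that $M_{ii}=(\sum_k X_{ki})^2$ is correct and does force $\det M$ to be a perfect square; via the adjugate one gets $(\det M)\cdot z_{ij}=0$, so $\sqrt{\det M}\cdot z_{ij}$ squares to zero. But you must still verify that this (or some other candidate) is \emph{nonzero} in the local ring at a point of \emph{every} stratum $N_{h,\ell}$ with $\ell\geq 1$, not just near $N_{0,a}$; this is the crux and is not addressed beyond $a=2$. The paper's obstruction argument sidesteps this entirely by working uniformly at an arbitrary point with $\ell\geq 1$, without ever needing the local ring.
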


\begin{proof}
It is clear that the points in $X_{0,0}$ are smooth as this is open in $X$, and square-zero deformations corresponds to deforming in a Grassmanian $\mathcal{G}r_{a,a+b}$, this gives also the dimension. To prove that these are the only smooth points, we'll argue as the case of characteristics $p \neq 2$, but first we need a lemma.

\begin{claim}
Let $V,(,)$ be a non-degenerate symmetric space of rank $N$ over $k$ algebraically closed of characteristics 2 with quadratic form $q$ non-zero. Let $W \subset V$ totally isotropic of rank $h$. Then we claim that there exists a basis $\underline e$ of $V$ such that the matrix of $(,)$ is $I_n$ and $W = \Vect(e_1+e_2,e_3+e_4,\dots,e_{2h-1}+e_{2h})$.
\end{claim}

\begin{proof}[Proof of claim]
Indeed, let $W = \Vect(f_1,\dots,f_h)$ and $g_1,\dots,g_h$ such that $(g_i,f_j) = \delta_{i,j}$ (this is possible by pulling back the dual basis of a completion of $\underline f$). We claim that we can modify the $g_i$ so that $(g_i,g_j) = \delta_{i,j}$. If $(g_1,g_1)$ is non zero then we can rescale to get $(g_1,g_1) = 1$. Otherwise, there exists $v \in (f_1,\dots,f_h)^\perp$ such that $(v,v) \neq 0$ : indeed, if $(v,v) = 0$ then $(f_1,\dots,f_h)$ is totally isotropic, thus there can exists at most $N/2 - h$ such vectors, but $\dim(f_1,\dots,f_h)^\perp = N-h$. Setting $g_1+v$ instead of $g_1$ we do not change the values on the $f_i$'s but $(g_1+v,g_1+v) = (v,v)$ as we are in characteristics 2. Assume that we have constructed $g_1,\dots,g_i, i < h$ such that $(g_k,f_i) = \delta_{k,i}$ for all $k,i \leq h$ and $(g_k,g_\ell) = \delta_{k,l}$ for all $k,\ell \leq i$.
We clam that we can find $v \in T = (f_1,\dots,f_h,g_1,\dots,g_i)^\perp$ such that $(v,v) \neq 0$. This space is $N-h-i$-dimensional. If a basis $v_1,\dots,v_{N-h-i}$ of vectors for this space are of norm 0; then they are also orthogonal since $2 = 0$. In particular, $(v_1,\dots,v_{N-h-i})^\perp$ contains $(f_1,\dots,f_h,g_1,\dots,g_i,v_1,\dots,v_{N-h-i})$. This last space has dimension $N$, thus $T = 0$, which is absurd since $i < h \leq N/2$. Choose a $v$ of non zero norm, then set
\[ g_{i+1}' = g_i + \sum_{k=1}^i \lambda_k f_k + v.\]
The norm of $g_{i+1}'$ is then $(g_{i+1},g_{i+1}) +  (v,v)$ and, for $k \leq i$,
\[ (g_{i+1},f_k) = 0, \quad (g_{i+1},g_k) = (g_{i+1},g_k) + \lambda_k.\]
Thus if we set $\lambda_k = - (g_{i+1},g_k)$ we have that $(g_{i+1}',g_k) = 0$ and, up to change $v$ by $\mu v, \mu \in k$ and rescaling $g_{i+1}$, we can assume that 
$(g_{i+1},g_{i+1}) = 1$. Setting $e_{2i-1} = f_i - g_i$ and $e_{2i} = g_i$, we have a beginning of a basis such that $(e_i,e_j) = \delta_{i,j}$. Then, if $h < N/2$, we can look at $\Vect(e_1,\dots,e_{2h})^\perp$ and argue as the end of proof of proposition \ref{propformquadcar2}
\end{proof}

Now let $x \in X_{h,\ell}(k)$ with $\ell \neq 0$. Let us prove that the strata are not smooth at any closed point. Then a $k[t]/(t^n)$ lift of $x$ in $X_{h,\ell}$ induces a lift of $W = \omega_1 \cap \omega_2 \subset \mathcal E[\pi]$, which is totally isotropic. We will find a lift of $x$ mod $t^2$ which we cannot lift. First, by the claim we can assume that the matrix of the divided pairing is the identity, and $W$ is given in the basis $(e_1,e_3,\dots,e_{2h-1},e_{2},e_4,\dots,e_{2h})$ by
\[ \left(\begin{array}{c} I_h \\ I_h \end{array}\right).\]
Now we will choose the deformation such that the lift of $W$ is given by
\[ \left(\begin{array}{c} I_h \\ I_h \end{array}\right) + t\left(\begin{array}{c} N_1 \\ N_2 \end{array}\right),\]
and we will actually set $N_1 = E_{1,1} + tN_1'$, and $N_2 = t N_2'$. Then the lift of $W$ is totally isotropic if 
\[ t(N_1 + ^{t}N_1) + t^2 {^t}N_1N_1 + t(N_2 + {^t}N_2) + t^2 {^t}N_2N_2 = 0,\]
thus the lift of $W$ mod $t^2$ is indeed totally isotropic, and we check there is indeed a lift to $k[t]/(t^2)$ which gives this lift of $W$\footnote{deform $\pi^{-1}(e_1+e_2) \in \omega$ by $\pi^{-1}(e_1+e_2) + t\pi^{-1}e_1$ and check it remains totally isotropic mod $t^2$}. Now we can show that there is no choice of $N_1',N_2'$ such that this lift to $k[t]/(t^3)$. Indeed, otherwise we would get as equation mod $t^3$
\[ t^2 E_{1,1} + t^2(N_1' + {^t}N_1' + N_2' + {^t}N_2') = 0,\]
but we can check easily that the right matrix always has a zero coefficient in position $(1,1)$ as we are in characteristics 2. This imply that all strata $X_{h,\ell}, \ell \neq 0$ aren't smooth at any point. In particular, as they are open, the strata $X_{h,h}$ are not is the smooth locus. Let us prove the following claim
\begin{claim}
Any point $x \in X_{h,\ell}(k)$ with $\ell \neq 0$ can deformed in $X_{h',\ell}(k)$ with $h' \neq 0$.
\end{claim}

\begin{proof}[Proof of claim]
If $h \neq 0$ this is trivial thus assume $h =0$. We will construct a $k[[t]]$-deformation of $x$ whose generic fiber lies in $X_{1,\ell}$. Choose any basis $\pi e_1,\dots, \pi e_n$ of $\omega = \mathcal E[\pi]$ such that $\pi e_1,\dots, \pi e_\ell$ is a basis of $\omega_1\cap \omega_2$, $\pi e_1,\dots,\pi e_a$ of $\omega_1$ and $\pi e_1,\pi e_\ell,\pi e_{a+1},\dots,\pi e_{a+b-\ell}$ of $\omega_2$. As $\ell  > 0$ we have $\omega_1 + \omega_2 \subsetneq \omega = \mathcal E[\pi]$, thus we can assume $\pi e_n \notin \omega_1+\omega_2$. Let $\widetilde{\mathcal E} = \mathcal E \otimes_k k[[t]]$. Let $\pi e_1,\dots,\pi e_n$ denote the $k[[t]]$-basis of $\widetilde{\mathcal E}$ with the same pairing matrix. Set $\widetilde \omega_1$ generated by $\pi e_1,\dots,\pi e_a$, $\widetilde \omega_2$ by $\pi e_1,\pi e_\ell,\pi e_{a+1},\dots,\pi e_{a+b-\ell}$ and $\widetilde\omega$ by $\pi e_1,\dots,\pi e_{n-1}, \pi e_n + t e_1$, for a preimage by $\pi$ of $e_1$ in $\widetilde{\mathcal E}$. We then check that $\widetilde \omega$ is totally isotropic : $<\pi e_n + te_1,\pi e_n + te_1> = <\pi e_n,te_1> + <te_1,\pi e_n> = 0$. Reducing all these data to $k[t]/(t^2)$ (which has divided powers over $k$) we deduce by Grothendieck-Messing and Serre-Tate a deformation of $x$ to $k[t]/(t^2)$, and then inductively for all $n$ to $k[t]/(t^n)$ (reducing this construction over $k[[t]]$), thus we get a $k[[t]]$-point of $X$, whose generic fiber has $h = 1$.
\end{proof}

Now, as the smooth locus of $X$ is open, it cannot contain any of the $X_{h,\ell}$ with $\ell \neq 0$. Indeed, if $x \in X_{h,\ell}$ with $\ell \neq 0$ is in the smooth locus, any deformation of it also is in the smooth locus. But there is a deformation with $h \neq 0$ thus we can assume that $h \neq 0$ for $x$. Then any deformation of $x$ has $h \neq 0$ and take a deformation $y$ of $x$ with maximal $h$ and minimal $\ell \geq h$, thus $\ell \neq 0$, and $y$ is still in the smooth locus. Up to change $x$ by $y$, we can assume every deformation of $x$ lies in $X_{h,\ell}$. Because the smooth locus is open, there is an open $U \subset X^{sm}$ containing $Y$. Reducing $U$ if necessary, we can assume that $U$ is irreducible and is included in $\cup_{h' \geq h,\ell' \leq \ell} X_{h',\ell'}$. But as there is no further possible deformation of $x$ in $X$, actually $U$ is included in $X_{h,\ell}$. But the smooth locus of $X_{h,\ell}$ is empty as $\ell \neq 0$\footnote{Obviously if we prove the closure relations for the strata, then this argument simplifies a lot.}.
\end{proof}

\subsection{Geometry in the second case}

In this section we assume that we are in the second case, i.e. the modified pairing given on $A[\pi]$ is given by the matrix
$$\left( \begin{array}{cccc}
A & 0 & \dots & 0 \\
0 & A & \ddots & \vdots \\
\vdots & \ddots & \ddots & 0 \\
0 & \dots & 0 & A
\end{array} \right) \qquad A = \left ( \begin{array}{cc}
0 & 1 \\
1 & 0
\end{array} \right )$$

\begin{prop}
\label{prop:6.5}
The stratum $X_{h,l}$ is empty if $l$ is not equal to $a$ modulo $2$.
\end{prop}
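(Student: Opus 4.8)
The plan is to deduce the statement from the classification of symmetric bilinear forms in characteristic $2$ given in Proposition \ref{propformquadcar2}, applied to the restriction of the modified pairing to $\omega_1$ rather than to the whole of $\mathcal E[\pi]$.

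First I would fix an algebraically closed field $k$ of characteristic $2$ and a point $x\in X_{h,l}(k)$, and write $V:=\mathcal E[\pi]$, a $k$-vector space of dimension $a+b$ carrying the modified pairing $\{,\}$, which is symmetric and perfect. Being in the second case means, by Proposition \ref{propformquadcar2}, that the quadratic form $q(v):=\{v,v\}$ is identically zero on $V$. Over $x$ we have $\omega_1\subseteq V$ of dimension $a$, and, as recorded earlier, $\omega_2$ is the orthogonal $\omega_1^{\perp'}$ of $\omega_1$ inside $V$ for $\{,\}$. Hence $\omega_1\cap\omega_2=\omega_1\cap\omega_1^{\perp'}$ is exactly the radical of the restriction $\{,\}|_{\omega_1}$, so $l=\dim(\omega_1\cap\omega_2)$ is the dimension of that radical. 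In particular $\omega_1$ is in general \emph{not} totally isotropic for the modified pairing, which is precisely why the argument will produce a constraint.

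Next I would pass to the quotient $\overline{\omega_1}:=\omega_1/(\omega_1\cap\omega_2)$: it carries a nondegenerate symmetric bilinear form of rank $a-l$, and its associated quadratic form is again identically zero. Indeed, for $v\in\omega_1$ and $r$ in the radical one has $\{v+r,v+r\}=\{v,v\}+2\{v,r\}+\{r,r\}=\{v,v\}$ since $\operatorname{char}k=2$ and $\{r,\omega_1\}=0$, so $v\mapsto\{v,v\}$ descends to $\overline{\omega_1}$, and it vanishes there because $q$ vanishes on all of $V\supseteq\omega_1$. Applying Proposition \ref{propformquadcar2} to $(\overline{\omega_1},\{,\})$ forces us into case $(2)$, whence $a-l$ is even, i.e.\ $l\equiv a\pmod 2$.

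Finally, taking the contrapositive: if $l\not\equiv a\pmod 2$ then there is no $x\in X_{h,l}(k)$ for any algebraically closed $k$ of characteristic $2$ and any $h$, so the locally closed subscheme $X_{h,l}\subseteq X$ is empty. I do not expect a genuine obstacle; the only point deserving a little care is the identification of $l$ with the rank of the radical of $\{,\}|_{\omega_1}$, which is immediate from $\omega_2=\omega_1^{\perp'}$.
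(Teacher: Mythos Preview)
Your proof is correct and follows essentially the same approach as the paper: restrict the modified pairing to $\omega_1$, observe that $\omega_1\cap\omega_2$ is its radical, and apply Proposition~\ref{propformquadcar2} to the induced nondegenerate form on $\omega_1/(\omega_1\cap\omega_2)$ to conclude that $a-l$ is even. The paper's proof is more terse, but the argument is the same.
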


\begin{proof}
Let us consider the modified pairing on $\omega_1$. It induces a non degenerate bilinear form on $\omega_1 / \omega_1 \cap \omega_2$. The associated quadratic form on this space is identically zero; this implies that its dimension must be even. Since the dimension is equal to $a-l$, the result follows.
\end{proof}

\begin{prop}
The smooth locus consists in the strata $X_{h,h} \cup X_{h-1,h}$ for $1 \leq h \leq a$, with $h =a$ modulo $2$, and $X_{0,0}$ if $a$ is even. Each of the previous sets are open, of dimension $ab + h$ and $ab$ respectively.
\end{prop}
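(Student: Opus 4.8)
The plan is to re‑run the arguments of Sections~2.4 and~2.5 with the symmetric modified pairing replaced by the alternating one available in the second case, and then to use the group action of the local model diagram of Proposition~\ref{prop:flat0}. First I would redo the dimension computation of a stratum $X_{h,l}$ (carried out in characteristic $\neq 2$ in the proof of Proposition~\ref{propclosurestrata} and of the proposition just after it): deforming a point of $X_{h,l}$ inside $X_{h,l}$ is governed by the same tower of four relatively smooth bundles, except that the modified pairing $\{,\}$ on $\mathcal E[\pi]$ and on its relevant subquotients is now \emph{alternating}. Hence the two factors $\Gr^{O}(h,(\mathcal E[\pi],\{,\}))$ and $\Gr^{O}(l-h,\dots)$ get replaced by the symplectic Grassmannians $\Gr^{Sp}(h,\dots)$ and $\Gr^{Sp}(l-h,\dots)$, smooth of dimension larger by $h$ and by $l-h$ respectively, while the other two factors --- an open subscheme of an ordinary Grassmannian and a Lagrangian Grassmannian for the original alternating pairing $<,>$ --- are unchanged; the open subscheme of the ordinary Grassmannian is non‑empty exactly when $a-l$ is even, recovering Proposition~\ref{prop:6.5}. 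Adding up, each non‑empty stratum $X_{h,l}$ is smooth of dimension
\[ \dim X_{h,l}=ab+l-\frac{(l-h)(l-h+1)}{2},\]
so in particular $\dim X_{h,h}=ab+h$, $\dim X_{h-1,h}=ab+h-1$ and $\dim X_{0,0}=ab$.

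Next I would establish the openness statements. From the inclusion $\overline{X_{h',l'}}\subseteq\coprod_{\alpha\le h'\le l'\le\beta}X_{\alpha,\beta}$ (specialization lowers $h$ and raises $l$) together with Proposition~\ref{prop:6.5} (so every index occurring is $\equiv a\pmod 2$), the complement of $X_{h,h}\cup X_{h-1,h}$ is closed whenever $h\equiv a\pmod 2$: a closure $\overline{X_{h',l'}}$ with $(h',l')\notin\{(h,h),(h-1,h)\}$ can meet $X_{h,h}$ only if $h\le h'\le l'\le h$, forcing $h'=l'=h$, and can meet $X_{h-1,h}$ only if $h-1\le h'\le l'\le h$, i.e. $(h',l')\in\{(h-1,h-1),(h-1,h),(h,h)\}$, the first of which is empty since $a-(h-1)$ is odd. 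So $X_{h,h}\cup X_{h-1,h}$ is open, of dimension $\max(ab+h,ab+h-1)=ab+h$, and the same argument shows $X_{0,0}$ is open of dimension $ab$. On the open subschemes $X_{h,h}$ and $X_{0,0}$, $X$ is then smooth, being locally a smooth scheme.

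It remains to decide, for each of the other strata, whether $X$ is smooth along it. Since the non‑smooth locus of $X$ is closed and stable under the group $G$ of Proposition~\ref{prop:flat0} (which acts transitively on each stratum), it is a union of strata, so it suffices to test one point of each. For $x\in X_{h-1,h}$ --- the codimension one closed complement of the dense open $X_{h,h}$ inside the open set $X_{h,h}\cup X_{h-1,h}$, so $\dim_x X=ab+h$ --- I would compute the complete local ring of $X$ at $x$ through the local model of Proposition~\ref{prop:flat0}, rewriting the Claim of that proof with the modified pairing on $\mathcal E[\pi]$ alternating rather than symmetric‑with‑nonzero‑quadratic‑form (so the matrix ``$Q$'' there becomes the split symplectic matrix, using that $b-a$ is even, a consequence of $a+b$ even); the expected conclusion is that this ring is \emph{regular} at $x$, equivalently that its Zariski tangent space has dimension $ab+h$, so $X$ is smooth along $X_{h-1,h}$. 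Conversely, for a stratum $X_{h,l}$ with $l-h\ge 2$ I would produce at one point a first order deformation $x'\in X(k[\varepsilon]/\varepsilon^2)$ of a point $x$ that does not lift to $X(k[\varepsilon]/\varepsilon^3)$, adapting the obstruction of Section~2.5 and of the first case of Section~\ref{sect:6}: the ``$2\varepsilon^2$'' term that produced the obstruction in characteristic $\neq 2$ is now zero, so one uses a true second order obstruction inside the relation ``$(Y+{}^tY+{}^tXX)Z$'' of the Claim, the diagonal‑versus‑off‑diagonal structure of $Y+{}^tY$ in characteristic $2$ forcing it to be nonzero precisely when $l-h\ge 2$. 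Combining, the smooth locus is $\coprod_{l-h\le 1}X_{h,l}$, which regroups exactly as $\bigcup_{1\le h\le a,\ h\equiv a\pmod 2}(X_{h,h}\cup X_{h-1,h})$ together with $X_{0,0}$ when $a$ is even, with the stated dimensions.

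The main obstacle is the local model computation in characteristic $2$, second case: pushing the Claim of Proposition~\ref{prop:flat0} through with the alternating pairing and getting a normal form clean enough to see both that the ring is regular exactly along the strata with $l-h\le 1$ and what the local dimension is. I expect the answer to be, up to an affine factor, a ring $k[Z,\Xi]$ modulo relations of the shape ``$SZ=0$'' with $Z$ now constrained by the alternating form, whose singular locus is a ``corank $\ge 2$'' determinantal‑type locus; matching that corank locus to the union of the strata with $l-h\ge 2$, and checking the dimensions, is where the real work lies.
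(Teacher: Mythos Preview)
Your outline is correct and would eventually work, but the route you take for smoothness along $X_{h-1,h}$ is considerably more laborious than the paper's. The paper does \emph{not} treat $X_{h,h}$ and $X_{h-1,h}$ separately and never computes a local ring. Instead it observes that on the whole open set $X_{h,h}\cup X_{h-1,h}$ the sheaf $\omega_1\cap\omega_2$ is locally free of constant rank $h$ (only $l$ is fixed, not the rank of $\pi\omega$), so deforming a point of $X$ there amounts to: (i) deforming $\omega_1\cap\omega_2$ to a totally isotropic $\mathcal F$ inside $(\mathcal E[\pi],\{,\})$; (ii) deforming $\omega_1$ inside $\mathcal F^{\perp'}$; (iii) deforming $\omega$ between $\mathcal F^{\perp'}$ and $\pi^{-1}\mathcal F$, isotropic for $<,>$. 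Step (i) is the new point: writing the lift by $\left(\begin{smallmatrix}I_h\\X\\Y\end{smallmatrix}\right)$, isotropy reads $Y+{}^tY={}^tXBX$, an equation between symmetric matrices \emph{with zero diagonal} (since $\{,\}$ is alternating), hence only $h(h-1)/2$ independent linear conditions. This immediately gives smoothness of the whole open set, of dimension $(a-h)(b-h)+h(h+1)/2+\bigl(h(a+b-h)-h(h-1)/2\bigr)=ab+h$, without ever singling out $X_{h-1,h}$ or invoking the Claim of Proposition~\ref{prop:flat0}. Your tower-of-Grassmannians computation of $\dim X_{h,l}$ is fine and gives the same numbers, but it only tells you about the strata individually, which is why you are then forced into the local-model detour.

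For the non-smoothness when $l-h\ge2$, the paper also proceeds by an explicit obstructed $k[t]/(t^2)$-lift, but the obstruction is not packaged via the relation $(Y+{}^tY+{}^tXX)Z$; rather one picks two independent vectors $\pi e_1,\pi e_2\in(\omega_1\cap\omega_2)/\pi\omega$ and two dual vectors $\pi e_{a+b-h-1},\pi e_{a+b-h}\in\omega[\pi]\setminus(\omega_1+\omega_2)$ (this is exactly where $l-h\ge2$ is used), writes down a concrete lift of $\omega_1$ and $\omega$ in this adapted basis, and checks that the pairing $\langle v_2,v_3\rangle$ of two prescribed vectors equals $t^2\neq0$ modulo $t^3$ for any attempted extension. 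Your idea of locating the obstruction in the ``$Y+{}^tY$'' diagonal structure is in the right spirit, but you will find it cleaner to build the explicit counterexample directly in $\mathcal E$ rather than through the normal form of the local ring.
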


\begin{proof}
It is clear that the points in $X_{0,0}$ are smooth. Let $1 \leq h \leq a$, with $a-h$ even. By the previous proposition, it is clear that $X_{h,h} \cup X_{h-1,h}$ is open. Let $x$ be a point in $X_{h,h} \cup X_{h-1,h}$, and let us prove that it is a smooth point, by computing the tangent space. First let us remark that on $X_{h,h} \cup X_{h-1,h}$, the space $\omega_1 \cap \omega_2$ has rank $h$. Deforming $x$ thus amounts to first deform the space $\omega_1 \cap \omega_2$ to a totally isotropic space $\mathcal{F}$; then deform $\omega_1$ inside the orthogonal of $\mathcal{F}$. Finally, one should deform $\omega$, contained in $\pi^{-1} \mathcal{F}$ and containing $\mathcal{F}^\bot$. Note that the original pairing descends to the quotient $\pi^{-1} \mathcal{F} / \mathcal{F}^\bot$. The second and third operations are smooth, of dimension respectively $(a-h)(b-h)$ and $h(h+1)/2$. We thus need to prove that the first operation is smooth of dimension $h(a+b-h) - h(h-1)/2$. \\
One can assume that the matrix of the modified pairing on $\mathcal E[\pi]$ is
$$\left( \begin{array}{ccc}
0 & 0 &  I_h \\
0 & B &  0 \\
I_h  & 0 & 0
\end{array} \right) $$
where this decomposition is written with respect to the inclusions $\omega_1 \cap \omega_2 \subseteq \omega_1 + \omega_2$, and $B$ is a matrix with copies of the matrix $A$ on the diagonal. Deforming the space $\omega_1\cap \omega_2$ involves a matrix $\left( \begin{array}{c}
I_h \\
X \\
Y
\end{array} \right)$, and thus $h(a+b-h)$ coordinates. The fact that this space should be totally isotropic gives the condition
$$Y + ^tY = ^tX B X$$
This is an equality between symmetric matrices which have coefficients $0$ on the diagonal. This is thus a smooth condition, with $h(h-1)/2$ linearly independent equations. We are thus left to prove that any point not in $\cup_{h \equiv a \pmod 2} X_{h,h} \cup X_{h-1,h}$ is not a smooth point. Let $x \in X_{h,\ell}(k)$ with $\ell - k \geq 2$. In particular $\dim \omega_1\cap \omega_2 / (\pi \omega) \geq 2$, thus let $\pi e_1,\pi e_2 \in \omega_1\cap\omega_2 \subset \mathcal E[\pi]$ be two vectors, linearly independant when sent to $\omega_1\cap \omega_2/(\pi \omega)$. As in the case of characteristics 2, we will look for specific lifts. As $\dim \omega[\pi] \backslash (\omega_1+\omega_2) = a+b-h-(a+b-\ell) = \ell - h \geq 2$, we can find two linearly independant vectors $\pi e_{a+b-h-1},\pi e_{a+b-h}$ there such that $\{ \pi e_1,\pi e_{a+b-h-1}\} = 1 = \{ \pi e_2,\pi e_{a+b-h}\}$ and $\{ \pi e_1,\pi e_{a+b-h} \} = 0 = \{ \pi e_2,\pi e_{a+b-h-1}\}$. Indeed, $\omega[\pi]$ is the orthogonal of $\pi \omega$ for the modified pairing $\{ , \}$. Moreover modifiying $\pi e_{a+b-h-1}$ by $\pi e_{a+b-h-1} - t \pi e_{a+b-h}$ we can moreover assume $\{\pi e_{a+b-h-1},\pi e_{a+b-h}\} = 0$ (the norm are automatically zero as we are in the second case). Assume that $\pi e_1,\dots,\pi e_n$ is a basis of $\mathcal E$ such that  $\pi e_1,\dots,\pi e_a$ is a basis of $\omega_1$, $\pi e_1,\dots,\pi e_\ell,\pi e_{a+1},\dots,\pi e_{a+b-\ell}$ a basis of $\omega_2$, and $\pi e_1,\dots,\pi e_{a+b-h}$ a basis of $\omega_\pi$, and assume given lifts $e_{\ell},\dots,e_{\ell-h+1}$ of $\pi e_{\ell},\dots,\pi e_{\ell-h+1}$ inducing a basis of $\omega/\omega[\pi]$ (they are thus two by two orthogonal). We can also assume that $\{\pi e_{a+b-h-1},\pi e_j\} = 0$ for all $j \neq 1$ and $\{\pi e_{a+b-h},\pi e_j\} = 0$ for all $j \neq 2$ (up to modify by linear combination of the $\pi e_j$). Choose $e_1,e_2,e_{a+b-h-1},e_{a+b-h} \in \mathcal E$ which are sent by $\pi$ to $\pi e_1,\pi  e_2,\pi e_{a+b-h-1},\pi e_{a+b-h}$ and which are moreover two by two orthogonal together with $e_{\ell},\dots,e_{\ell-h+1}$ (we can modify each by a $\pi$-torsion element).
We set the following in $\widetilde{\mathcal E} = \mathcal E \otimes_k k[t]/(t^3)$ :
\[ \widetilde\omega_1 = (\pi e_1,\pi e_2 + t \pi e_{a+b-h-1},\pi e_3,\dots,\pi e_a), \quad \text{and} \]\[ \widetilde \omega = (\pi e_1,\dots, \pi e_{a+b-h-2},e_{\ell},\dots,e_{\ell-h+1},\pi e_{a+b-h-1} + t e_1, \pi e_{a+b-h} + t e_2 + t^2 e_{a+b-h-1}\]
Clearly, $\pi \widetilde\omega \subset \widetilde\omega_1$. We claim that modulo $t^2$ this defines a lift of $x$. We just need to check if $\omega$ is totally isotropic, and this boils down to
\[ < \pi e_2 + t \pi e_{a+b-h-1}, \pi e_{a+b-h} + t e_2> = \{ \pi e_2 + t \pi e_{a+b-h-1}, t \pi e_2\} = 0, \]
\[ < \pi e_1, \pi e_{a+b-h-1} + t e_1> = \{ \pi e_1 , t \pi e_1\} = 0, \]
\[< \pi e_2 + t \pi e_{a+b-h-1}, \pi e_{a+b-h-1} + t e_1> = \{ \pi e_2 + t \pi e_{a+b-h-1}, t \pi e_1\} = t^2 = 0, \]
\[< \pi e_1 , \pi e_{a+b-h} + t e_2> = \{ \pi e_1, t \pi e_2\} =  0, \]
Now assume we have another lift modulo $t^3$, this implies that there exists vectors $v_1 \in \widetilde{\omega_1}'$ with $v_1 = \pi e_1 + t^2 w_1$, $v_2 =  \pi e_2 + t \pi e_{a+b-h-1} + t^2w_2$, and $v_3,v_4\in \widetilde\omega'$ such that $v_3 = \pi e_{a+b-h-1} + t e_1 + t^2 w_3, v_4 = \pi e_{a+b-h} + t e_2 + t^2 e_{a+b-h-1}+t^2 w_4$. Moreover, $t^2\pi w_3,t^2\pi w_4$. But then, $\widetilde{\omega}'$ should be totally isotropic, but,
\begin{eqnarray*} 
<v_2,v_3> = < \pi e_2 + t \pi e_{a+b-h-1} + t^2w_2, \pi e_{a+b-h-1} + t e_1 + t^2w_3> \\ 
= \{ \pi e_2 + t \pi e_{a+b-h-1} + t^2 v_2, t \pi e_1 + t^2 \pi w_3\} = t^2 \neq 0. \end{eqnarray*}

\end{proof}

\begin{rema}
If $a=1$, the whole variety is smooth.
\end{rema}

\bibliographystyle{smfalpha} 
\bibliography{biblio} 

\end{document}